\documentclass[11pt]{amsart}
\usepackage[T1]{fontenc}
\usepackage[utf8]{inputenc}
\usepackage{lmodern}
\usepackage{amssymb,mathtools}
\usepackage{booktabs}
\usepackage{microtype}
\AtBeginDocument{{\footnotesize\fontencoding{OT1}\selectfont}}
\usepackage{needspace}
\usepackage[margin=2.5cm]{geometry}
\usepackage{hyperref}
\hypersetup{hidelinks,unicode=true,
  pdftitle={On union-closed families with prescribed number of k-sets},
  pdfauthor={Amir Jafari},
  pdfsubject={Extremal set theory; layered Leck-Roberts-Simpson conjecture},
  pdfkeywords={union-closed families, extremal set theory, max-lex order, binomial strips}}
\theoremstyle{plain}
\newtheorem{theorem}{Theorem}[section]
\newtheorem{conjecture}[theorem]{Conjecture}
\newtheorem*{weightedconjecture}{Conjecture}
\newtheorem{lemma}[theorem]{Lemma}
\newtheorem{proposition}[theorem]{Proposition}
\newtheorem{corollary}[theorem]{Corollary}
\theoremstyle{definition}
\newtheorem{definition}[theorem]{Definition}
\theoremstyle{remark}
\newtheorem*{remark}{Remark}

\numberwithin{equation}{section}
\title{On union-closed families with prescribed number of $k$-sets}

\author[A.~Jafari]{Amir Jafari}
\address{Department of Mathematics, New Uzbekistan University,
Tashkent 100007, Uzbekistan}
\email{a.jafari@newuu.uz}

\subjclass[2020]{Primary 05D05}
\keywords{Union-closed families, extremal set theory, max-lex order,
binomial strips}
\date{}

\begin{document}
\begin{abstract}
Fix positive integers \(N,k,n\) with \(n\ge k\). We seek the minimum number of members of size at least \(n\) in a finite family of finite sets closed under union and containing exactly \(N\) distinct sets of size \(k\). This problem is a specialization of the Leck--Roberts--Simpson weighted conjecture: assign weight one to sets of size at least \(n\) and zero to smaller sets. The predicted minimizer consists of the unions of nonempty subfamilies of the first \(N\) \(k\)-subsets of the natural numbers, ordered by their largest elements and, when these agree, by their increasing lists lexicographically. For an integer \(t\ge1\), call the range
\[
 \binom{n+t-1}{k}<N\le\binom{n+t}{k}
\]
the \(t\)-th strip.

We prove the layered conjecture throughout the first strip, and throughout the second strip for \(k=3\). For arbitrary \(k\), we prove the second strip for families of subsets of an \((n+2)\)-element set. For \(k,t\ge3\), we prove the \(t\)-th strip for families of subsets of an \((n+t)\)-element set whenever \(n\ge(t+1)(k-1)\). With no restriction on the ground set, we prove it for \(k\ge3\) and \(t\ge2\) whenever \(n>\frac52 k^2t\). For sufficiently large \(k\), we obtain a sufficient bound of order \(k^2t/\log k\), uniformly in \(t\ge2\).
\end{abstract}
\maketitle

\hypertarget{introduction}{%
\section{Introduction}\label{introduction}}

A family \(\mathcal F\) of sets is called \emph{union-closed} if \(A\cup B\in\mathcal F\) whenever \(A,B\in\mathcal F\). All sets and families in this paper are finite. Fix positive integers \(N,k,n\) with \(n\ge k\). Among union-closed families containing exactly \(N\) sets of size \(k\), we seek the minimum number of members of size at least \(n\). A set of size \(k\) will be called a \emph{\(k\)-set}.

For any family \(\mathcal A\), let \(\langle\mathcal A\rangle\) denote the family of all unions of nonempty subfamilies of \(\mathcal A\). The empty union is not adjoined; in particular, \(\langle\varnothing\rangle=\varnothing\). This is the smallest union-closed family containing \(\mathcal A\); the members of \(\mathcal A\) will be called its \emph{generators}. Every union-closed family contains the union-closure of its \(k\)-sets, and taking unions of \(k\)-sets adds no new \(k\)-sets. Thus it is equivalent to minimize the number of large unions generated by \(N\) distinct \(k\)-sets. When \(n=k\), every generated member meets the cutoff, so the problem reduces to minimizing \(|\langle\mathcal A\rangle|\).

The problem of minimizing the size of a union-closure was considered by Roberts in his 1999 thesis \cite{Roberts1999}. The conjecture proposed there was proved for \(k=2\) by Leck, Roberts, and Simpson \cite{LeckRobertsSimpson2012}. They also proposed the weighted version stated below.

More recently, Ranđelović \cite{Randjelovic2026} proved Roberts's size conjecture when \(N=\binom{m}{k}\), for each fixed \(k\) and all sufficiently large \(m\). In this case the minimum is attained by taking all \(k\)-subsets of an \(m\)-element set. He also gave another proof for \(k=2\) and established further cases with \(N\) close to a binomial coefficient.

To state the conjecture, we first recall the max-lex order on the \(k\)-subsets of the natural numbers. We write \(\mathbb N=\{1,2,3,\ldots\}\).

\begin{definition}\label{def:maxlex} Let \(k\ge1\). For distinct \(k\)-subsets \(A\) and \(B\) of \(\mathbb N\), we say that \(A\) precedes \(B\) in \emph{max-lex order} if \(\max A<\max B\), or if \(\max A=\max B\) and the smallest integer belonging to exactly one of \(A\) and \(B\) belongs to \(A\).
\end{definition}

Thus we compare the largest elements first. When these agree, we compare the increasing lists of elements lexicographically. For example, \(\{1,4,5\}\) precedes \(\{2,3,5\}\).

For positive integers \(N\) and \(k\), let \(\mathcal F_k(N)\) denote the family of the first \(N\) \(k\)-sets in this order. We write \([m]=\{1,\ldots,m\}\), with \([0]=\varnothing\), and \(\binom Vj\) for the family of all \(j\)-subsets of a finite set \(V\). For integer arguments, \(\binom ab=0\) unless \(0\le b\le a\).

Given nonnegative weights \(0\le w_0\le w_1\le\cdots\), define the weight of a finite family \(\mathcal F\) by
\[
 w(\mathcal F)=\sum_{A\in\mathcal F}w_{|A|}.
\]
Leck, Roberts, and Simpson \cite[Conjecture~6]{LeckRobertsSimpson2012} proposed the following bound.

\begin{weightedconjecture}[Leck--Roberts--Simpson]
Let \(N,k\ge1\). For every family \(\mathcal A\) of \(N\) distinct \(k\)-sets and every nonnegative nondecreasing sequence of weights,
\[
 w(\langle\mathcal A\rangle)
 \ge w(\langle\mathcal F_k(N)\rangle).
\]
\end{weightedconjecture}

The choice \(w_j=1\) for every \(j\) gives the size conjecture. We shall use weights that count only members above a prescribed size cutoff.

For a family \(\mathcal F\) and integers \(j,n\), put \[
\mathcal F_j=\{A\in\mathcal F:|A|=j\},
\qquad
\mathcal F_{\ge n}=\{A\in\mathcal F:|A|\ge n\}.
\] We call \(\mathcal F_j\) the \(j\)-th \emph{level}, or \emph{layer}, and the members of \(\mathcal F_{\ge n}\) the \emph{large members}, with the cutoff \(n\) understood. A subscript \(\le n\) has the analogous meaning. These conventions also apply at nonpositive cutoffs; thus \(\mathcal F_{\ge0}=\mathcal F\). Write
\[
 U_k(N,n)=|\langle\mathcal F_k(N)\rangle_{\ge n}|
\]
for the number supplied by the max-lex family. Taking \(w_j=0\) for \(j<n\) and \(w_j=1\) for \(j\ge n\) gives our layered formulation as a special case. It applies to every union-closed family containing the generators, since such a family contains their union-closure.

\begin{conjecture}[Layered form]\label{conj:layered} Let \(N,k,n\) be positive integers with \(n\ge k\). If \(\mathcal F\) is a union-closed family containing exactly \(N\) \(k\)-sets, then \[
|\mathcal F_{\ge n}|
\ge
|\langle\mathcal F_k(N)\rangle_{\ge n}|.
\]
\end{conjecture}

Applied to generated families, the case \(n=k\) gives Roberts's size conjecture. On the finite range of sizes occurring in the two closures, every nonnegative nondecreasing weight is a nonnegative linear combination of threshold weights. Thus our layered formulation, taken over all cutoffs, is equivalent to the weighted conjecture.

\begin{definition}\label{def:support} The \emph{support} of a family \(\mathcal F\) is the set \[
\operatorname{supp}(\mathcal F)=\bigcup_{A\in\mathcal F}A.
\] Its cardinality is called the \emph{support-size} of \(\mathcal F\). For positive integers \(N\) and \(k\), let \(\sigma_k(N)\) denote the least possible support-size of a family of \(N\) distinct \(k\)-sets. Thus \[
\sigma_k(N)=\min\left\{m\in\mathbb N:m\ge k,\ N\le\binom{m}{k}\right\}.
\] \end{definition}

Indeed, a set of size \(m\) has exactly \(\binom{m}{k}\) \(k\)-subsets. The minimum is attained by \(\mathcal F_k(N)\).

Let \(n\ge k\) and \(t\ge1\) be integers. We say that \(N\) is in the \emph{\(t\)-th strip}, with respect to \(n\) and \(k\), if \[
\binom{n+t-1}{k}<N\le\binom{n+t}{k}.
\] Equivalently, \(\sigma_k(N)=n+t\). Every family of \(N\) distinct \(k\)-sets in this regime therefore has support-size at least \(n+t\).

\subsection{The first strip}

A problem proposed by Fedor Petrov at the 2016 International Mathematics Competition for University Students \cite{IMC2016} asks for at least three members of size at least \(n\) whenever a union-closed family contains more than \(\binom nk\) \(k\)-sets, with \(n\ge k\). The following theorem gives the sharp bound.

\begin{theorem}\label{thm:first-threshold} Let \(n\ge k\ge1\) be integers. If a union-closed family \(\mathcal F\) contains more than \(\binom nk\) distinct \(k\)-sets, then \[
|\mathcal F_{\ge n}|\ge n-k+3.
\] This bound is attained by a family containing exactly \(\binom nk+1\) \(k\)-sets.
\end{theorem}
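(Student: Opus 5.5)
The plan is to handle the sharpness construction first and then prove the lower bound by induction on \(k\), using the link of a suitably chosen element.

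\emph{Construction.} Fix a \((k-1)\)-set \(B\subseteq[n]\) and let \(\mathcal A=\binom{[n]}{k}\cup\{B\cup\{n+1\}\}\), which has \(\binom nk+1\) generators. A member of \(\langle\mathcal A\rangle\) either avoids \(n+1\), and is then a union of \(k\)-subsets of \([n]\) with size at least \(n\) only for \([n]\) itself, or it has the form \(B\cup\{n+1\}\cup C\) with \(C\) a union of \(k\)-subsets of \([n]\). Any subset of \([n]\) of size at least \(k\) is such a union. So the large members containing \(n+1\) are \([n+1]\) and the \(n-k+1\) sets \([n+1]\setminus\{y\}\) with \(y\in[n]\setminus B\). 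Together with \([n]\) this gives exactly \(n-k+3\), which I will check directly.

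\emph{Lower bound.} Let \(\mathcal A=\mathcal F_k\), \(|\mathcal A|=N>\binom nk\), and \(S=\operatorname{supp}(\mathcal A)\), so \(|S|\ge n+1\). We may assume \(\mathcal F=\langle\mathcal A\rangle\). The base case \(k=1\) is direct: there are at least \(n+1\) singletons, their unions include all subsets of size \(n\) or \(n+1\) of some \((n+1)\)-set, and this gives \(n+2=n-k+3\) large members. The case \(n=k\) is also immediate, since two \(k\)-sets and their union already give three.

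For the inductive step, pick \(x\in S\). The sets \(A\setminus\{x\}\) with \(x\in A\in\mathcal F\) form a union-closed family \(\mathcal F^x\) whose \((k-1)\)-sets correspond to the \(k\)-sets through \(x\). Large members of \(\mathcal F\) containing \(x\) correspond to members of \(\mathcal F^x\) of size at least \(n-1\). Pascal's rule gives
\[
\binom nk=\binom{n-1}{k-1}+\binom{n-1}{k},
\]
so for each \(x\) one of two cases holds. If \(\deg x>\binom{n-1}{k-1}\), induction on \(k\) applied to \(\mathcal F^x\) with cutoff \(n-1\) yields \((n-1)-(k-1)+3=n-k+3\) large members containing \(x\), and we are done. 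Otherwise, for every \(x\in S\), the \(k\)-sets avoiding \(x\) number more than \(\binom{n-1}{k}\).

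In that remaining case I will argue directly. Every \(k\)-set \(A\) lies in \(\mathcal A\) only if it avoids some element. The sets \(S\) and \(T_x=\bigcup\{A\in\mathcal A:x\notin A\}\) are members of \(\mathcal F\). Moreover \(|T_x|\ge n\) by the counting bound \(\sigma_k(\binom{n-1}{k}+1)=n\), and \(T_x\subseteq S\setminus\{x\}\). If \(|S|=n+1\), then \(T_x=S\setminus\{x\}\) for all \(x\), which gives \(|S|+1=n+2\ge n-k+3\) large members. If \(|S|\ge n+2\), I will use the chain \(S\supsetneq T_x\) together with the further unions \(T_x\cup A\) and \(T_x\cap\)-avoiding sets to produce \(n-k+3\) distinct sets. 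Alternatively, I will pass to a second element \(y\) and apply the same dichotomy inside the avoiders of \(x\), inducting on \(n\).

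I expect this degree-bounded case with \(|S|\ge n+2\) to be the main obstacle: the \(T_x\) might coincide for many \(x\). The first thing I would try is to choose \(x\) of maximum degree and show that \(\deg x\le\binom{n-1}{k-1}\) forces many elements of \(S\) with distinct \(T_x\).
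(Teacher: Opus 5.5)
\textbf{There is a genuine gap: the lower bound is not proved when \(|S|\ge n+2\).} Several parts of your proposal are correct and match the paper's argument: the construction, the base cases, and the high-degree step (passing to the link at \(x\) and applying the statement for \(k-1\) with cutoff \(n-1\)). In the remaining case you also correctly show that every \(T_x\) is a large member of \(\mathcal F\) different from \(S\).

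However, the argument stops exactly where the real work is. For \(|S|\ge n+2\) you only list possible strategies: the unions \(T_x\cup A\), a second point \(y\) with induction on \(n\), and a maximum-degree point. None of these is carried out, and you note yourself that the \(T_x\) may coincide. The induction-on-\(n\) route, for instance, only yields unions of size at least \(n-1\) that avoid \(x\). Lifting them by adjoining a generator through \(x\) need not be injective.

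The missing idea is to count the distinct \(T_x\) through incidence classes.
\begin{itemize}
\item Put \(x\sim y\) when \(x\) and \(y\) lie in exactly the same generators, and let \(q\) be the number of classes.
\item \(T_x\) depends only on the class of \(x\).
\item Different classes give different sets. If a generator \(A\) contains \(x\) but not \(y\), then \(x\in A\subseteq T_y\), while \(x\notin T_x\).
\item So there are exactly \(q\) distinct sets \(T_x\), all large and all different from \(S\). Together with \(S\) this gives \(q+1\) large members.
\end{itemize}

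To bound \(q\), note that every generator is a union of whole classes. Replacing each of its points by its class label gives a \(k\)-element multiset on \(q\) labels, and this multiset determines the generator. Hence \(\binom nk<N\le\binom{q+k-1}{k}\), which forces \(q+k-1>n\) and so \(q+1\ge n-k+3\).

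This argument covers every support size at once, so your split into \(|S|=n+1\) and \(|S|\ge n+2\) is unnecessary. No maximum-degree choice is needed either: what forces many distinct \(T_x\) is the total number \(N\) of generators, not the individual degrees.
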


At \(N=\binom nk+1\), the max-lex family consists of all the \(k\)-subsets of \([n]\) and one set containing \(n+1\). Its large unions number \(n-k+3\). More generally, the following theorem gives the exact bound throughout the first strip.

\begin{theorem}[First strip]\label{thm:first-strip} Let \(n\ge k\ge1\) and \[
\binom nk<N\le\binom{n+1}{k}.
\] If \(\mathcal F\) is a union-closed family containing exactly \(N\) \(k\)-sets, then \[
|\mathcal F_{\ge n}|
\ge 2+\sigma_{n-k+1}\!\left(N-\binom nk\right)
=U_k(N,n).
\] Equality is attained by \(\langle\mathcal F_k(N)\rangle\).

\end{theorem}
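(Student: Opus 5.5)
The plan has two parts. First I check that the max-lex family meets the bound, then I prove the lower bound by a reduction to families whose support has exactly \(n+1\) elements. Throughout, let \(\mathcal A=\mathcal F_k\) be the family of \(N\) generators, put \(M=N-\binom nk\ge1\), and note \(|\mathcal F_{\ge n}|\ge|\langle\mathcal A\rangle_{\ge n}|\). Also \(|\operatorname{supp}\mathcal A|\ge\sigma_k(N)=n+1\).

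\emph{The max-lex value.} The family \(\mathcal F_k(N)\) consists of \(\binom{[n]}{k}\) together with sets \(S_i\cup\{n+1\}\), \(i=1,\dots,M\), where \(S_i\) is a \((k-1)\)-subset of \([n]\). Its large unions are \([n]\), \([n+1]\), and sets \(X\cup\{n+1\}\) with \(|X|=n-1\). Such a set is generated exactly when \(X\supseteq S_i\) for some \(i\) (for \(n-1\ge k\), the \(k\)-subsets of \(X\) cover \(X\); the case \(n=k\) is checked directly). Passing to complements \(T_i=[n]\setminus S_i\), which are \((n-k+1)\)-sets, this count equals \(|\bigcup_i T_i|\). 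So the total is \(2+|\bigcup T_i|\). Minimising over choices of the \(S_i\) shows that \(\min|\bigcup T_i|=\sigma_{n-k+1}(M)\). The max-lex choice attains this minimum, because its complements form an initial segment in the corresponding order on \((n-k+1)\)-subsets of \([n]\), after reversing the ground set. This gives \(U_k(N,n)=2+\sigma_{n-k+1}(M)\).

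\emph{Lower bound when the support has \(n+1\) elements.} Let \(U=\operatorname{supp}\mathcal A\) with \(|U|=n+1\). The large members of \(\langle\mathcal A\rangle\) are \(U\) itself and the sets \(U\setminus\{x\}\) that are generated. Since \(N>\binom nk\), not all generators avoid a common point, so every point lies in some generator. The plan is to show that at least \(1+\sigma_{n-k+1}(M)\) points \(x\) have the property that the generators avoiding \(x\) cover \(U\setminus\{x\}\). Fix a point \(p\) lying in the fewest generators; at least \(M\) generators contain \(p\), since at most \(\binom nk\) avoid it. If \(U\setminus\{p\}\) is covered by generators avoiding \(p\), it is one large member. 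For \(x\ne p\), call \(x\) bad if some \(y\ne x\) lies in no generator avoiding \(x\). The complements (inside \(U\setminus\{p\}\)) of the generators through \(p\) form a family of \(\ge M\) sets of size \(n-k+1\). The aim is to show that each point \(x\) in the union of these complements yields a generated set \(U\setminus\{x\}\), by the same covering argument as in the extremal computation. This gives \(\ge\sigma_{n-k+1}(M)\) such sets, plus \(U\), plus one further size-\(n\) member, which is \(U\setminus\{p\}\) or a substitute when that set is not generated. This bookkeeping, especially producing the "\(+2\)" when \(U\setminus\{p\}\) is not generated, is where I expect the most case analysis.

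\emph{Reduction to support \(n+1\), the main obstacle.} For larger support \(U\) I would induct on \(|U|\). For \(x\in U\) let \(\mathcal A^{x}\) be the generators avoiding \(x\). If some \(x\) has \(|\mathcal A^{x}|>\binom nk\), then \(\langle\mathcal A^{x}\rangle_{\ge n}\) together with \(U\) (which is not in that closure) gives \(|\langle\mathcal A^x\rangle_{\ge n}|+1\) large members. Induction then requires \(\sigma_{n-k+1}(M)\le\sigma_{n-k+1}(M')+1\) with \(M'=|\mathcal A^x|-\binom nk\), which is not automatic. So I would instead choose \(x\) to maximise \(|\mathcal A^x|\) and add the extra large members \(G\cup\{x\}\), where \(G\) is a large member of \(\langle\mathcal A^x\rangle\) and \(x\) is absorbed via generators through \(x\). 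These members are distinct from those avoiding \(x\) and make up the deficit. If instead every \(|\mathcal A^x|\le\binom nk\), then every point lies in at least \(M\) generators. Taking unions of \(n\)-point-covering subfamilies then gives at least \(|U|\ge n+1\) distinct large sets of the form "union of generators avoiding \(x\)", plus \(U\). I would compare this count directly with \(2+\sigma_{n-k+1}(M)\le n+2\). I expect making this second case sharp, and controlling the first case's deficit, to be the hardest step.
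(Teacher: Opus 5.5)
Your computation of \(U_k(N,n)\) agrees with the paper's max-lex count. The lower bound, however, has genuine gaps, and the key claim on support \(n+1\) is false. It is not true that every point \(x\) in the union of the complements of the generators through a minimum-degree point \(p\) gives a generated set \(U\setminus\{x\}\). Take \(k=3\), \(n=5\), \(U=[6]\), and the eleven triples consisting of all triples of \(\{1,2,3,5\}\), the sets \(\{5,6,z\}\) for \(z=1,2,3,4\), and \(\{1,2,4\},\{1,3,4\},\{2,3,4\}\). The point \(p=4\) has minimum degree (four), and \(\{1,2,4\}\) avoids \(5\). Yet \(U\setminus\{5\}\) is not generated, since every generator through \(6\) contains \(5\).

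This case can be repaired, but by a different mechanism. If \(U\setminus\{x\}\) is not generated, some \(y\) lies in no generator avoiding \(x\), so at most \(\binom{n-1}{k}\) generators avoid \(x\). The link at \(x\) then has at least \(\binom{n-1}{k-1}+M\) members of size \(k-1\) on \(n\) points. This is the same problem for \((k-1,n-1)\), with the same \(p=n-k\) and excess at least \(M\), so you can induct on \(k\). If no such \(x\) exists, you already have \(n+2\ge2+\sigma_{p+1}(M)\) large unions.

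For larger support nothing is established, and this is where the real difficulty lies. In your first case, passing to \(\mathcal A^x\) can lower \(M\) sharply. The sets \(G\cup\{x\}\) need not be generated, and the sets \(G\cup A\) with \(x\in A\) may coincide for different \(G\). So the deficit \(\sigma_{p+1}(M)-\sigma_{p+1}(M')-1\) is never paid.

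Your second case is false. For \(k=3\), \(n=4\), take \(\binom{[4]}{3}\) together with \(\{1,5,6\}\). Every \(|\mathcal A^x|\le4\) and the support has six points. But the union \(V_1\) of the generators avoiding \(1\) is \(\{2,3,4\}\), and \(V_5=V_6=[4]\), so only four distinct large \(V_x\) occur. The whole closure has just five large members, fewer than \(n+2\), so no argument aiming at \(n+2\) can succeed here.

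The missing ideas are those of the paper. First, at a bad point (\(|V_x|<n\)), pass to the link rather than to the generators avoiding \(x\). This lowers the generator size \(\kappa\) and the cutoff \(\lambda\) together and preserves both \(p\) and the excess, because \(\binom{\lambda}{\kappa}-\binom{\lambda-1}{\kappa}=\binom{\lambda-1}{\kappa-1}\). No deficit arises, and no reduction to minimum support is needed.

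Second, once no bad points remain, you need an actual upper bound on the number of generators in terms of the number of large unions. The sets \(V_D\) over incidence classes supply \(q+1\) large unions. The weighted-antichain bound on class selections handles the steps \(s\le2p+1\). For \(s\ge2p+2\), certificates of at most \(p+1\) covers together with the LYM inequality finish the count.
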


The excess \(R=N-\binom nk\) determines which step of the bound applies: if
\(\binom{s-1}{n-k+1}<R\le\binom{s}{n-k+1}\), the minimum is \(s+2\).
The model count is derived in Section~\ref{the-first-strip}.

The proof begins by taking, for each point \(x\), the union of the generators avoiding \(x\). If this union is too small, many generators contain \(x\), and removing \(x\) reduces both the generator size and the cutoff by one. Otherwise these unions are all large and can be counted. Two points give the same union precisely when they belong to the same generators. The sizes of these incidence classes give one bound using an antichain, a family of sets in which no member properly contains another. A second bound identifies each generator by a small collection of large unions containing it; these identifying collections also form an antichain. Section~\ref{antichains-and-small-certificates} develops the two counts, and Section~\ref{the-first-strip} applies them to complementary parts of the first strip.

\subsection{Minimum support}

In the \(t\)-th strip, the max-lex family has support of size \(n+t\), the least possible size. We first compare it with families on a ground set of that size.

\begin{theorem}[Second strip on minimum support]\label{thm:second-minimum}\label{cor:second-minimum} Let \(n\ge k\ge1\) and \[
\binom{n+1}{k}<N\le\binom{n+2}{k}.
\] If a union-closed family \(\mathcal F\) has support-size \(n+2\) and contains exactly \(N\) \(k\)-sets, then \[
|\mathcal F_{\ge n}|\ge U_k(N,n).
\] Equality is attained by \(\langle\mathcal F_k(N)\rangle\).
\end{theorem}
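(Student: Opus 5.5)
Write $V=\operatorname{supp}(\mathcal F)$, so $|V|=n+2$, and let $\mathcal A=\mathcal F_k$ with $|\mathcal A|=N$. We may replace $\mathcal F$ by $\langle\mathcal A\rangle$. The large members are then unions of generators of size $n$, $n+1$ or $n+2$, i.e.\ complements in $V$ of sets of size $2$, $1$ or $0$. So everything is phrased through complements. For $x\in V$ put $\mathcal A^{x}=\{A\in\mathcal A:x\notin A\}$ and $M_x=\bigcup\mathcal A^{x}$.

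\emph{Step 1: the model count.} First I would compute $U_k(N,n)$. Write $N=\binom{n+1}{k}+R$ with $0<R\le\binom{n+1}{k-1}$. The max-lex family consists of all $k$-subsets of $[n+1]$ together with the first $R$ sets of the form $\{n+2\}\cup B$, $B\in\binom{[n+1]}{k-1}$, taken in max-lex order. Its large unions are:
\begin{itemize}
\item $V$ and $[n+1]$;
\item the $n+1$ sets $[n+1]\setminus\{y\}$;
\item the sets $V\setminus\{y\}$ and $V\setminus\{y,z\}$ with $y,z\le n+1$ that are realized.
\end{itemize}
By a first-strip type analysis applied to the $(k-1)$-sets $B$ (cutoff $n-1$ on ground set $[n+1]$), the count should come out as $n+3$ plus a term governed by $\sigma_{n-k+2}$ or $\sigma_{n-k+1}$ of $R$-dependent quantities. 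I would derive this formula explicitly and treat it as the target $U_k(N,n)$.

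\emph{Step 2: the lower bound by complements.} Next, count the large members as
\[
\mathcal F_{\ge n}=\{V\}\cup\{V\setminus\{x\}\in\mathcal F\}\cup\{V\setminus\{x,y\}\in\mathcal F\}.
\]
The set $V\setminus\{x\}$ lies in $\mathcal F$ iff $M_x=V\setminus\{x\}$. The set $V\setminus\{x,y\}$ lies in $\mathcal F$ iff the generators avoiding both $x$ and $y$ cover everything else.

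Since $N>\binom{n+1}{k}$, each $\mathcal A^x$ is a family of $k$-sets on $n+1$ points missing at most $\binom{n+1}{k}$ sets. Hence for most $x$ we get $M_x$ large, and a point $x$ with $M_x\ne V\setminus\{x\}$ forces many generators through $x$.

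I would split the points into two kinds. \emph{Degenerate points} are those $x$ whose removal lowers the generator count to within the first strip. For such $x$, remove $x$: the family $\{A\setminus\{x\}:x\in A\}$ consists of $(k-1)$-sets on $n+1$ points, with cutoff $n-1$. Apply Theorem~\ref{thm:first-strip} (or Theorem~\ref{thm:first-threshold}) to them, exactly as in the first-strip proof sketched in the introduction.

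For the remaining points, count the pairs $\{x,y\}$ with $V\setminus\{x,y\}\in\mathcal F$ using the two antichain counts of Section~\ref{antichains-and-small-certificates}:
\begin{itemize}
\item the incidence-class bound, since points in the same generators give identical unions;
\item the certificate bound, where each generator is identified by the large unions containing it.
\end{itemize}
Concretely, a generator $A$ lies in $V\setminus\{x,y\}$ iff $\{x,y\}\subseteq V\setminus A$. So the missing pairs form a graph on $V$. The condition that $V\setminus\{x,y\}$ is not in $\mathcal F$ means some point $z\ne x,y$ is covered only by generators meeting $\{x,y\}$. This gives a Kruskal--Katona/Lovász-type count of how many $k$-sets can exist when many pairs are missing.

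\emph{Step 3: matching the extremal function.} Finally, show that the number of realized pairs plus singletons is at least the Step 1 count. I would argue by shifting (compression) on the fixed ground set $V$. Compressions $S_{ij}$ preserve the number of $k$-sets. I would try to verify that they do not increase the number of large unions, at least when restricted to unions of co-size at most $2$. This would reduce to left-compressed families, which are nested like the max-lex family. There the count can be compared directly with the formula, with one residual case analysis on how many generators contain $n+2$.

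The main obstacle is this compression step. Union-closure does not in general commute with shifting. I expect to need a direct check that each lost co-size-$2$ union is matched by a newly created one. If this fails, I would fall back to the two-bound strategy of the first strip, splitting the second strip by the size of $R$ and applying the incidence-class bound for small $R$ and the certificate bound for large $R$.
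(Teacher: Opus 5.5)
\textbf{There is a genuine gap.} The proposal lacks the counting argument that proves the theorem, and both reductions it relies on fail as stated.

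\emph{Step 3 (compression).} This would not reach the model even if you showed that compressions never increase $|\langle\cdot\rangle_{\ge n}|$, which you have not shown. The problem is that left-compressed families are not initial max-lex segments. Take $k=2$, $n=3$, $N=7$ on $[5]$. Both the model (the complete graph on $[4]$ plus the edge $15$) and the graph with edges $12,13,14,15,23,24,25$ are left-compressed. Yet they have $12$ and $15$ unions of size at least three. After compressing, the whole extremal comparison remains.

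\emph{Step 2 (bad points).} Here the reduction targets the wrong theorem. Suppose $V\setminus\{x\}$ is not generated, with uncovered point $z$. Then every generator avoiding $x$ lies in the $n$-set $V\setminus\{x,z\}$, so the link at $x$ has at least $\binom n{k-1}+R$ members on $n+1$ points.
\begin{itemize}
\item At cutoff $n-1$ this is the \emph{second} strip for $(k-1)$-sets, not the first. Theorem~\ref{thm:first-strip} only yields link unions of size at least $n$, that is, unions through $x$ of size at least $n+1$.
\item An induction on $k$ inside the second strip gives only $n+2+h_1^{(q)}(R)+h_2^{(q)}(R)$, with $q=n+2-k$. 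The target is $U_k(N,n)=n+3+h_1^{(q)}(R)+h_2^{(q)}(R)$, which equals $n+4+\binom s2+r$ on its steps. Your Step~1 guess in terms of $\sigma$ has the wrong shape.
\item The paper runs exactly this deletion, in complemented form with $q$ fixed. It recovers the missing union in one of two ways. Either the $n$-set $V\setminus\{x,z\}$ is generated, or all generators avoiding $x$ also avoid two further points, which forces an increase in the excess.
\end{itemize}

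\emph{The decisive gap} is the complementary case, in which every $V\setminus\{x\}$ is generated. There you already have $n+3$ unions of co-size at most one. You must still exhibit at least $h_1^{(q)}(R)+h_2^{(q)}(R)$ generated $n$-sets, which is already $\binom{q+1}2$ at $R=1$. Nothing you list supplies this:
\begin{itemize}
\item The incidence classes are now all singletons.
\item The LYM and certificate bounds of Section~\ref{antichains-and-small-certificates} control only co-size-one unions. They give single-binomial steps such as $\sigma_{p+1}(R)$ with $p=n-k$, hence a lower bound only linear in $n$.
\item A Kruskal--Katona count of missing generators does not close across the strip either. Different non-generated $n$-sets can have different blocking points, and the forced missing stars overlap.
\end{itemize}

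The missing idea is homological.
\begin{itemize}
\item Complementing in $V$ gives $\binom{n+1}{q-1}+R$ sets of size $q$.
\item Over $\mathbb F_2$, the boundaries of $q$-sets are spanned by those through one fixed point. So these complements support an $R$-dimensional space of cycles.
\item By parity, every proper subset of a set occurring in a cycle is an intersection of complements. Its complement is therefore a generated union.
\item The Bj\"orner--Kalai inequality then bounds the faces of the resulting complex from below. In particular there are at least $h_2^{(q)}(R)+h_1^{(q)}(R)$ realised pairs.
\end{itemize}
This requires $q\ge3$. The case $n=k$ is separate: it follows from Theorem~\ref{thm:first-strip} at cutoff $k+1$ plus the $N$ generators.
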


On an \((n+2)\)-element ground set, a large union omits at most two points. Complementation therefore turns the problem into counting intersections of sizes zero, one, and two. The excess of generators above \(\binom{n+1}{k}\) forces linear relations among the vectors recording the subsets obtained by deleting one point from each complementary set. A parity argument shows that every proper subset of a set participating in one of these relations is itself an intersection. The Björner--Kalai inequality then bounds their number. Points that fail to occur as singleton intersections require a separate deletion argument. Together these counts give exactly the second-strip bound; Section~\ref{small-intersections} proves this in a form that also identifies the remaining deficits in higher strips.

For higher strips, we obtain the following complete range on minimum support.

\begin{theorem}[Higher strips on minimum support]\label{cor:higher-linear}
Let \(k\ge3\), \(t\ge3\), and
\[
 n\ge(t+1)(k-1),\qquad
 \binom{n+t-1}{k}<N\le\binom{n+t}{k}.
\]
If a union-closed family \(\mathcal F\) has support-size \(n+t\) and contains exactly \(N\) \(k\)-sets, then
\[
 |\mathcal F_{\ge n}|\ge U_k(N,n).
\]
Equality is attained by \(\langle\mathcal F_k(N)\rangle\).
\end{theorem}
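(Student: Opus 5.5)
Work on the ground set \(V=\operatorname{supp}(\mathcal F)\) with \(|V|=n+t\) and pass to complements, as in the second-strip argument. A member of size at least \(n\) has complement of size at most \(t\). Let \(\mathcal A\) be the \(N\) generators and \(\mathcal C=\{V\setminus A: A\in\mathcal A\}\), a family of \(N\) distinct \(t\)-subsets of \(V\) (note \(|V\setminus A|=n+t-k\ge t\); more precisely the complements have size \(n+t-k\), and the large unions correspond to intersections of subfamilies of \(\mathcal C\) of size at most \(t\)). Thus \(|\mathcal F_{\ge n}|\) is at least the number of distinct intersections \(\bigcap\mathcal D\), \(\varnothing\ne\mathcal D\subseteq\mathcal C\), of size \(j\in\{0,\dots,t\}\). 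The plan is to bound this count level by level, \(I_j\) for each \(j\le t\), and compare with the model. For \(\mathcal F_k(N)\) the large unions form \(\binom{[n+t-1]}{\ge n}\) together with the unions involving the last \(R=N-\binom{n+t-1}{k}\) generators. So I would first compute \(U_k(N,n)\) explicitly as a sum over levels, expressed through the shadow-type structure of the last \(R\) sets in max-lex order (a Kruskal--Katona-type expression in \(R\)).

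The key steps, in order, are as follows. (1) Reuse the machinery the paper develops for the second strip in Section~\ref{small-intersections}, which, as announced, is proved in a form identifying the deficits in higher strips. It yields lower bounds on the numbers of intersections of sizes \(0,1,2\), together with the linear-algebra/parity relations forced by the excess \(R\) and the Björner--Kalai inequality. (2) Extend to levels \(j\ge3\) by induction on \(t\). Either some point \(x\in V\) lies in few complements, i.e.\ in many generators; then deleting \(x\) from the generators containing it reduces to strip \(t-1\) with cutoff \(n-1\) and \(k-1\). Or every point lies in many complements; then every small set arises as an intersection, and we count directly as in Theorem~\ref{thm:second-minimum}. (3) The hypothesis \(n\ge(t+1)(k-1)\) enters in the second alternative. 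Each generator has size \(k\) and each complement has size \(n+t-k\), which is large compared with \(t\). Averaging then shows that all \(j\)-subsets with \(j\le t\) of a suitable large set are intersections, giving the full count \(\binom{n+t-1}{\ge n}\)-type term plus the excess contribution. (4) Add up the levels and verify the sum dominates \(U_k(N,n)\). Equality for \(\langle\mathcal F_k(N)\rangle\) is immediate, since that family has support \(n+t\).

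I expect the main obstacle to be step (3) at the top levels \(j\) close to \(t\). There the count of intersections must capture exactly the contribution of the \(R\) excess generators, and a crude averaging bound loses lower-order terms. I would try to lift the parity/linear-relation argument from the \(j\le2\) case to all \(j\le t\) using Björner--Kalai on the simplicial complex generated by the intersections. I would use \(n\ge(t+1)(k-1)\) to guarantee that each relevant point lies outside at most \(k-1\) generators per relation, so the relations remain nondegenerate across \(t+1\) levels.
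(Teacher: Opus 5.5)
There is a genuine gap. Your plan has no mechanism for the intersections contributed by the nearly complete part of the family, and the dichotomy in step~(2) cannot supply one.

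The cycle argument you hope to lift already holds at every level: Lemma~\ref{lem:cycle-intersections} gives $h_j^{(q)}(R)+h_{j-1}^{(q)}(R)$ realised $j$-sets for all $2\le j<q$. As the remark after Theorem~\ref{thm:second-minimum} computes, the resulting bound still falls short of $U_k(N,n)$ by $\sum_{j=2}^{t-1}\bigl[\binom{m-1}{j}-h_j^{(q)}(R)\bigr]$, where $m=n+t$. For $t\ge3$ this is positive outside a terminal interval of the strip. In the model there are $\binom{m-1}{j-1}+h_j^{(q)}(R)$ intersections of size $j$. The shortfall $\binom{m-1}{j-1}-h_{j-1}^{(q)}(R)$ comes from the complete layer on $m-1$ points, whereas the dimension count producing the cycles sees only the excess $R$.

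Step~(2) does not repair this. If ``$x$ lies in many generators'' means that $x$ is bad, i.e.\ $|V_x|<n$, the case is vacuous here. It would force $N\le\binom{n-1}{k}+\binom{n+t-1}{k-1}<\binom{n+t-1}{k}$ when $n\ge(t+1)(k-1)$. If it merely means high degree, such points always exist, since the average degree exceeds $(1-k/m)\binom{m-1}{k-1}$. Passing to the link then discards every union avoiding $x$. Moreover, a link with more than $\binom{n+t-2}{k-1}$ members on $(n-1)+t$ points is again in strip $t$, not $t-1$.

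Finally, your argument must be exact on the max-lex family. There the last point lies in only $R$ generators, and $\binom{m-1}{t}-h_t^{(q)}(R)$ of the $n$-sets through it are not generated; this is positive, for instance, when $R=1$. So no branch can assert that every small set is an intersection, and no averaging estimate produces these exact colex terms.

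What is missing is a budget on \emph{missing} generators, which is how the paper handles levels $2,\dots,t-1$. Put $h=\binom mk-N\le\binom{n+t-1}{k-1}-1$. A set $U\subseteq V$ fails to be generated exactly when some $z\in U$ lies in no generator contained in $U$. Then all $\binom{|U|-1}{k-1}$ $k$-subsets of $U$ through $z$ are missing.

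If every set of size at least $n+1$ is generated, only the bottom level is deficient. There the Bj\"orner--Kalai count suffices, by Lemma~\ref{lem:bottom-level}; this is the one place where your cycle idea is enough.

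Otherwise, suppose no two points block all non-generated large sets. Then there are three such stars through distinct points, one in an $(n+1)$-set and two in distinct $n$-sets. After the overlaps are subtracted, they still contain at least $\binom n{k-1}+\binom{n-2}{k-1}+\binom{n-3}{k-1}$ missing generators. The hypothesis $n\ge(t+1)(k-1)$ serves only to make this number exceed $\binom{n+t-1}{k-1}-1$, as in Theorem~\ref{thm:h-criterion}. Hence two fixed points block every non-generated large set. After complementation, the pointed Kruskal--Katona bound of Lemma~\ref{lem:two-designated} allows at most $\binom{m-1}{j}-h_j^{(q)}(R)$ non-generated $(m-j)$-sets for each $1\le j\le t$. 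Summing these bounds gives exactly $U_k(N,n)$.
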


Here the useful objects are the sets that are not generated. Such a set \(U\) has a point \(z\) uncovered by the generators contained in \(U\). All \(k\)-subsets of \(U\) containing \(z\) must therefore be missing. Counting these missing generators limits the number of missing large unions. Section~\ref{sec:higher-minimum} proves a criterion in terms of the number of missing generators and deduces Theorem~\ref{cor:higher-linear}. It also gives exact bounds for binomial values of that number and proves the complete third strip for triples on minimum support.

\subsection{Larger supports}

On a larger support, the complements of large unions need no longer be small. We instead reduce the support and count the additional unions obtained when deleted points are restored. The next theorem gives a range in which this count suffices for every support size.

\begin{theorem}[General strips]\label{thm:general-strips} Let \(k\ge3\), \(t\ge2\), and let \(n,N\) be positive integers satisfying \[
n>\frac52 k^2t,
\qquad
\binom{n+t-1}{k}<N\le\binom{n+t}{k}.
\] Every union-closed family \(\mathcal F\) containing exactly \(N\) \(k\)-sets satisfies \[
|\mathcal F_{\ge n}|\ge U_k(N,n).
\] Equality is attained by \(\langle\mathcal F_k(N)\rangle\).
\end{theorem}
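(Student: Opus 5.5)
We plan to argue by induction on the support-size \(m\) of the generator family \(\mathcal A=\mathcal F_k\). We may assume \(\mathcal F=\langle\mathcal A\rangle\), and \(m\ge n+t\) because \(N\) lies in the \(t\)-th strip. The base case \(m=n+t\) cannot simply cite Theorem~\ref{cor:higher-linear}, since that theorem needs \(t\ge3\) and \(n\ge(t+1)(k-1)\). We therefore take the base case from Theorem~\ref{thm:second-minimum} when \(t=2\), and from Theorem~\ref{cor:higher-linear} when \(t\ge3\). The latter applies because \(n>\tfrac52k^2t\ge(t+1)(k-1)\) for \(k\ge3\).

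For \(m>n+t\) we reduce the support. Choose a point \(x\) of minimum degree in \(\mathcal A\), that is, lying in the fewest generators, and let \(d\) be this degree. Write \(\mathcal A'\) for the generators avoiding \(x\), and put \(N'=N-d\). Averaging gives \(d\le kN/m\). The strip condition gives \(N\le\binom{n+t}{k}\), which is of order \(n^k/k!\) since \(n\gg k^2t\). Hence \(d\) is small compared with \(N-\binom{n+t-1}{k}\) in most cases.

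We split into two cases according to where \(N'\) lands.
\begin{itemize}
\item If \(N'\) is still in the \(t\)-th strip, induction gives \(|\langle\mathcal A'\rangle_{\ge n}|\ge U_k(N',n)\). We then count the extra large unions created by restoring \(x\): for each generator \(G\ni x\), the unions \(G\cup B\) with \(B\in\langle\mathcal A'\rangle\) and \(|B|\ge n-1\). These lie outside \(\langle\mathcal A'\rangle\) because they contain \(x\). At least \(d\) distinct such sets arise, since distinct generators through \(x\) can be separated by adjoining suitable large unions avoiding them. We must then show that \(U_k(N,n)-U_k(N',n)\) is at most this count.
\item If \(N'\) drops into a lower strip, \(\mathcal A'\) still has the larger support, but the max-lex bound for \(N'\) is smaller. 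The gain from restoring \(x\) must then cover the jump of \(U_k(\cdot,n)\) across a strip boundary.
\end{itemize}

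A preliminary step is an explicit formula, or at least good two-sided estimates, for \(U_k(N,n)\) in the \(t\)-th strip, analogous to the first-strip formula \(2+\sigma_{n-k+1}(R)\). In \(\langle\mathcal F_k(N)\rangle\), every large union is contained in \([n+t]\) and omits at most \(t\) points. Its count is \(\sum_{j\le t}\) (number of \((n+t-j)\)-subsets that are generated), and only the last partially filled column \(n+t\) contributes irregularly. From this we extract the increment bound \(U_k(N,n)-U_k(N-d,n)\le C\,d\,k\,t\) for some explicit constant \(C\).

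Comparing with the restoration gain is the crux. Restoring \(x\) produces at least roughly \(d\cdot(\text{number of large unions of }\mathcal A'\text{ disjoint-ish from a generator})\) new sets. Since \(m>n+t\), many large unions of \(\mathcal A'\) omit the other \(k-1\) points of a generator through \(x\). This yields a gain of order \(d\,(n-k^2t)/k\) or similar.

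The main obstacle is the quantitative comparison: showing that the gain exceeds the max-lex increment precisely when \(n>\tfrac52k^2t\). The strip-crossing case is the harder of the two, because there the increment includes the whole boundary jump, of size about \(\binom{t}{\cdot}\)-many unions, while \(d\) may be tiny. To handle it, we would first try to choose \(x\) so that \(\mathcal A'\) has support \(m-1\ge n+t\) and \(N'\) stays in the same strip whenever \(d<N-\binom{n+t-1}{k}\). In the remaining case, where \(d\) is large, every point has degree above \(N-\binom{n+t-1}{k}\). We would then bound directly, using the antichain and certificate counts of Section~\ref{antichains-and-small-certificates}, to show that the family has many more large unions than \(U_k(N,n)\). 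The constant \(\tfrac52\) should emerge from optimizing this degree threshold against the increment estimate.
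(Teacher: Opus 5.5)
Your base case is handled correctly, but the inductive step has a genuine gap. It rests on the increment bound \(U_k(N,n)-U_k(N-d,n)\le C\,dkt\) with a constant \(C\), and this bound is false. Put \(p=n-k\).

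\begin{itemize}
\item In the second strip, \(U_k(N,n)=n+4+\binom s2+r\). Raising \(R\) by one from \(\binom{s}{p+2}\) moves from the step \((s,r)=(s,s-1)\) to the step \((s+1,p+1)\), and the count rises by \(p+2=n-k+2\). This is unbounded for fixed \(k,t\).
\item In the \(t\)-th strip, each time the largest parameter \(s_t\) of Lemma~\ref{lem:model-count} increases, a single generator adds \(S_{t-1}(n-k+t-1)\) unions, of order \(n^{t-1}\).
\item At a strip boundary it is worse. The first generator of the \(t\)-th strip raises the count from \(S_{t-1}(n+t-1)\) by \(S_t(n-k+t)\), which is comparable to the whole target.
\end{itemize}

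This crossing case is not exceptional. Averaging gives only \(d\le kN/m<\binom{n+t-1}{k-1}\), the width of the strip. Meanwhile \(R=N-\binom{n+t-1}{k}\) may equal \(1\), and then every deletion crosses the boundary. So in the typical case the unions through \(x\) must supply order \(n^t/t!\) new members, whereas your estimates guarantee only \(d\), or roughly \(dn/k\). Your fallback does not close this. When \(R=1\), your fallback case \(d\ge R\) is every case. Moreover, the antichain and certificate counts of Section~\ref{antichains-and-small-certificates} are first-strip tools built on the unions \(V_x\) that omit a single point.

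The missing idea is to drop increment bookkeeping on non-minimal support and prove the stronger bound \(|\langle\mathcal A\rangle_{\ge n}|\ge S_t(n+t)\). This is the largest value of \(U_k(\cdot,n)\) on the strip. The paper does this for support \(n+c\), \(c>t\), in two regimes.

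\begin{itemize}
\item If \(c\ge2kt\), an inclusion-minimal generator cover has at least \(\lceil n/k\rceil+2t\) members, each with a private point. Omitting at most \(2t\) of them gives \(S_{2t}(\lceil n/k\rceil+2t)\) distinct large unions.
\item If \(t<c<2kt\), use the following degree observation. If an omission \(Y\setminus F\) with \(|F|\le t\) is not generated, some uncovered point in it has degree at most \(t\binom{|Y|-2}{k-2}\). So either every such omission is generated, which already gives \(S_t(n+t)\), or one deletes low-degree points until at most \(n+t\) points remain. This loses at most \((c-t)t\binom{n+2kt-2}{k-2}\) generators.
\end{itemize}

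In the second regime, each later failed omission of at most \(t+d\) points forces a new star of roughly \(\binom{n-b-1}{k-1}\) missing generators, disjoint from the earlier ones. The budget therefore bounds the further deletions. Next, cover at most \(b\) deleted points by original generators, including \(d=\lceil(c-t)/k\rceil\) extra outside points. Every omission of at most \(t+d\) points from the remaining set is then generated, which gives \(S_{t+d}(n+t+d-kb)\) large unions.

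The constant \(\frac52\) comes from two checks: the cover comparison, and the budget inequality together with the ratio against \(\binom{n+t}{t}\) for \(b=2d+1\) or \(b=3d+2\). It does not come from balancing a degree threshold against increments.
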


The proof uses the support-reduction strategy of Ranđelović \cite[Section~2]{Randjelovic2026}, keeping track of the size cutoff. If the support is very large, choose an inclusion-minimal collection of generators covering it. Each chosen generator has a point in no other chosen generator, so different omissions give different unions. On a smaller support, successive deletions of points contained in few generators leave a dense family. We count unions in this remaining family and recover further unions by restoring deleted points. Each restored point permits one more omission from the remaining support without falling below the cutoff. Section~\ref{sec:larger-supports} carries out this comparison and gives the following improvement for large \(k\). Here \(\log\) denotes the natural logarithm.

\begin{theorem}\label{thm:logarithmic-strips}
For every \(\varepsilon>0\) there is an integer \(k_0(\varepsilon)\) such that, for all \(k\ge k_0(\varepsilon)\) and \(t\ge2\), Conjecture~\ref{conj:layered} holds throughout the \(t\)-th strip whenever
\[
 n\ge\frac{((2+\varepsilon)t+1)k^2}{\log k}.
\]
After increasing \(k_0(\varepsilon)\), the same conclusion holds under the simpler condition
\[
 n\ge\frac{(2+\varepsilon)k^2t}{\log k}
 \qquad\text{if also}\qquad t\ge2/\varepsilon.
\]
\end{theorem}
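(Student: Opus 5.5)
The plan is to rerun the support-reduction argument behind Theorem~\ref{thm:general-strips}, but to handle the sparse points with a sharper count. The loss that there forced \(n>\frac52k^2t\) should improve by a factor of order \(\log k\). Fix \(N\) in the \(t\)-th strip and a generator family \(\mathcal A\) of \(N\) distinct \(k\)-sets, with support \(S\) of size \(m\ge n+t\).

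The first step is the very-large-support case. When \(m\) exceeds a threshold \(M\), choose an inclusion-minimal subcover of \(S\) by generators. It has at least \(m/k\) members, and each has a private point. Deleting any \(j\) of them leaves a union of size at least \(m-jk\). This yields at least
\[
\sum_{j\,:\,m-jk\ge n}\binom{\lceil m/k\rceil}{j}
\]
large unions, which exceeds \(U_k(N,n)\) once \(m\) is a suitable multiple of \(n\). For the comparison, \(U_k(N,n)\le U_k\bigl(\binom{n+t}{k},n\bigr)=\sum_{i\le t}\binom{n+t}{i}\).

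For \(n+t\le m\le M\), delete points one at a time. Each deleted point lies in at most \(d\) of the current generators, where \(d\) is a density parameter. Continue until the remaining family \(\mathcal A'\) on support \(S'\) has every point in more than \(d\) generators. Each deleted point \(x\) loses at most \(d\) generators. Restoring it gives extra unions \(U\cup G\) with \(x\in G\), and these allow one more omission from \(S'\) while the set still has size at least \(n\). Since \(N>\binom{n+t-1}{k}\) and at most \(rd\) generators are lost after \(r\) deletions, \(\mathcal A'\) still has more than \(\binom{n+t-1}{k}-rd\) members. Its support therefore stays near \(n+t\), or else \(r\) is large. The dense family is then handled by the minimum-support machinery: Theorem~\ref{cor:higher-linear} and its deficit form from Section~\ref{sec:higher-minimum}.

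The quantitative change is the choice of \(d\). The earlier proof presumably took \(d\) of order \(k\), and each restored point cost a factor of order \(k\) in the comparison, giving the \(k^2t\) scale. Here I would take \(d\approx k/\log k\), or equivalently optimize a threshold \(\lambda\) with \(\lambda\log\lambda\approx k\). The aim is that each restored point contributes about \(\binom{n}{\,\cdot\,}\)-many new unions, and that the number of missing generators it accounts for is compared through the estimate \(\binom{n+t}{k}/\binom{n+t-1}{k}=1+k/(n+t-k)\). This sets the balance: restoring \(r\) points must recover a deficit of order \(rk^2t/n\) unions per omission level, against a gain of order \(\log k\) per point coming from a \(\log\)-concavity estimate of binomial tails. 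Choosing the parameters so that the error terms are \(O(k^2/\log^2 k)\) gives the condition \(n\ge((2+\varepsilon)t+1)k^2/\log k\) for \(k\ge k_0(\varepsilon)\). The second form is then immediate: if \(t\ge2/\varepsilon\), then \((2+\varepsilon)t+1\le(2+\tfrac32\varepsilon)t\), and renaming \(\varepsilon\) and enlarging \(k_0\) finishes.

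The main obstacle will be the middle regime, where many points are sparse but \(m\) is not yet above \(M\). I would attack it by a double count of pairs (restored point, omitted point of \(S'\)), using the private-point structure to make the extra unions distinct. All asymptotics would be kept explicit so that \(k_0(\varepsilon)\) exists uniformly in \(t\ge2\).
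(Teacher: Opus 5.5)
There is a genuine gap. The intermediate-support regime is where the whole theorem lives, and you leave it open; the mechanism you sketch for it would not work.

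First, a degree threshold of order \(k\) or \(k/\log k\) is on the wrong scale. A reduced family is useful only if every omission of at most \(t\) points is generated. On a support of size \(s\), the degree bound forcing this is \(t\binom{s-2}{k-2}\): an uncovered point of a non-generated set \(S'\setminus F\), \(|F|\le t\), has all its generators meeting \(F\). A point of degree far above \(k\) can still be uncovered. With your threshold the deletions may stop at once on a support well above \(n+t\). The generator count bounds the support only from below, so ``the support stays near \(n+t\)'' does not follow.

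Second, you cannot finish the dense family with Theorem~\ref{cor:higher-linear}. After deletions it need not lie in the \(t\)-th strip, and its support may be \(n+t-i\) with \(i>0\). Even when the theorem applies, its model count drops with the number of generators, and you never weigh that drop against the restored unions. The paper uses the minimum-support theorems only when the original family already has support \(n+t\); there \(n\ge(t+1)(k-1)\) holds for large \(k\).

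Third, putting the cover threshold at a multiple of \(n\) leaves a middle range of length of order \(n\). Over that range the generators lost to deletion can exceed \(\binom{n}{k-1}\) by a factor exponential in \(k\).

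The missing ideas are these.
\begin{enumerate}
\item Apply Lemma~\ref{lem:large-support-cover} already for \(c\ge kJ\), with \(J=2t+\lceil\varepsilon t/2\rceil\). Comparing \(\binom{\lceil n/k\rceil+J}{J}\) with \(\binom{n+t}{t}\) at \(n\approx k^2t/\log k\) forces \(J>2t\). This choice of \(J\), fed into the budget below, is where \((2+\varepsilon)t+1\) comes from.
\item For \(t<c<kJ\), delete minimum-degree points down to support at most \(n+t\). The loss is at most \(\Lambda\) in \eqref{eq:common-missing-budget}. The hypothesis on \(n\) enters precisely as \(k^2J/n\le\gamma\log k\) with \(\gamma<1\), so the binomial growth factor \(\exp(k^2J/n)\) in this budget is at most \(k^\gamma\).
\item Then clean, with \(d=\lceil(c-t)/k\rceil\): while some omission of at most \(t+d\) points is not generated, delete an uncovered point. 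The resulting missing stars are disjoint, and in the relevant range each has at least \(\binom{n-b-1}{k-1}\) members. So \eqref{eq:recovery-budget} limits the number of cleaning steps. Take \(b=d+\lceil k^\beta d\rceil-1\) with \(\gamma<\beta<1\). Then the budget ratio is \(O(dk^\gamma\log k)=o(k^\beta d)\), while the covering cost \(kb\) stays \(o(n)\). This is where the factor \(\log k\) is won.
\item Restore some deleted points together with the cleaned ones, covering them by original generators. Counting all omissions of at most \(t+d\) points from the remaining support gives \(S_{t+d}(M+d-kb)\). This exceeds \(S_t(M)\) by \eqref{eq:recovery-binomial-comparison}, since each extra omission gains a factor of order \(n/t\), not \(\log k\).
\end{enumerate}

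One small slip at the end: renaming \(\varepsilon\) after \((2+\varepsilon)t+1\le(2+\tfrac32\varepsilon)t\) produces the condition \(t\ge3/\varepsilon\), not \(t\ge2/\varepsilon\). Instead apply the first assertion with \(\varepsilon/2\), since \((2+\varepsilon/2)t+1\le(2+\varepsilon)t\) is exactly \(t\ge2/\varepsilon\).
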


The second strip allows a more precise use of deletion when the generators are small.

\begin{theorem}[Second strip for small generators]\label{thm:small-k}
Let $n\ge k$ and
\[
 \binom{n+1}{k}<N\le\binom{n+2}{k}.
\]
Every union-closed family $\mathcal F$ containing exactly $N$ $k$-sets
satisfies
\[
 |\mathcal F_{\ge n}|\ge U_k(N,n)
\]
in each of the following cases:
\begin{enumerate}
\item[(i)] $k=3$, for every $n\ge3$;
\item[(ii)] $k=4$ and $n\ge40$, or $k=5$ and $n\ge50$.\footnote{A more detailed count gives $n\ge18$ for $k=4$ and $n\ge28$ for $k=5$.}
\end{enumerate}
Equality is attained by $\langle\mathcal F_k(N)\rangle$.
\end{theorem}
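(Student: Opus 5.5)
We prove Theorem~\ref{thm:small-k} by induction on the support size of the family of generators. The base case is the minimum-support case treated by Theorem~\ref{cor:second-minimum}. Let \(\mathcal A=\mathcal F_k\), so \(|\mathcal A|=N\), and put \(m=|\operatorname{supp}(\mathcal A)|\ge n+2\). If \(m=n+2\), Theorem~\ref{cor:second-minimum} gives the claim directly. For \(m>n+2\), we follow the support-reduction strategy described for Theorem~\ref{thm:general-strips}, but we delete points one at a time and use sharper bookkeeping.

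The reduction step runs as follows.
\begin{enumerate}
\item[(a)] Choose a point \(x\) of minimum degree \(d(x)\) in \(\mathcal A\), and let \(\mathcal A'\) be the generators avoiding \(x\).
\item[(b)] Averaging gives \(d(x)\le kN/m\).
\item[(c)] If \(N-d(x)\) is still above \(\binom{n+1}{k}\), then \(\mathcal A'\) lies in the second strip on a support of size \(m-1\), and induction applies.
\item[(d)] We then count the large unions of \(\langle\mathcal A\rangle\) that contain \(x\). These are new, because members of \(\langle\mathcal A'\rangle\) avoid \(x\).
\item[(e)] For any generator \(A\ni x\) and any union \(V\) of generators of \(\mathcal A'\) with \(|V\cup A|\ge n\), the set \(V\cup A\) is a new large union. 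Taking \(V\) to be the union of all generators avoiding a chosen point \(y\), as in the first-strip argument, produces many distinct new unions, one for each incidence class of \(y\).
\item[(f)] So each deletion contributes at least one new large union. In fact it contributes roughly \(m-k\) of them, which dominates the increment \(U_k(N,n)-U_k(N-d(x),n)\). That increment is small, since by the first-strip formula \(U_k\) grows in steps of \(\sigma_{n-k+1}\) type.
\end{enumerate}

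When \(N-d(x)\) drops to \(\binom{n+1}{k}\) or below, we instead compare against Theorem~\ref{thm:first-strip} or Theorem~\ref{thm:first-threshold}. Here the loss \(d(x)\) is at most \(kN/m\). The gain must cover the gap \(U_k(N,n)-U_k(N-d(x),n)\), and this gap is largest for fewest points deleted.

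The main obstacle is the accounting for \(U_k(N,n)\) itself in the second strip. We need an explicit formula for the max-lex count, presumably of the form \(n-k+3+\) (a first-strip-type term on the excess over \(\binom{n+1}{k}\)) plus a count of unions of size \(n\) and \(n+1\) inside \([n+2]\). We also need to show that the increment under removing \(d\) generators is at most the number of new unions, which is about \(m-k-\) (small). For \(k=3\) the degrees are small, since \(d(x)\le 3N/m=O(n^2)\). This allows a case check on \(m\in\{n+3,n+4,\dots\}\): once \(m\) is large, an inclusion-minimal cover of the support by generators gives at least \(\approx m/k\) chosen generators with private points. That yields at least \(2^{m/k}\)-many distinct unions, most of them large, which trivially exceeds \(U_k(N,n)\). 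For \(k=3\) this settles everything for all \(n\). For \(k=4,5\), the crude estimate of the increment against \(m-k\) only closes once \(n\) is moderately large, which accounts for the stated thresholds \(n\ge40\) and \(n\ge50\).
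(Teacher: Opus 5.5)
Your reduction keeps the cutoff at $n$ after deleting $x$, and steps (c)--(f) cannot close the count. The model count jumps at the boundary between the first and second strips. By Theorem~\ref{thm:first-strip}, $U_k(\binom{n+1}{k},n)=n+2$, while $U_k(\binom{n+1}{k}+1,n)=n+4+\binom{n-k+2}{2}+n-k+1$, which is $(n^2+n+6)/2$ for triples. A minimum-degree point can have degree close to the average $kN/m\approx\binom{n}{k-1}$, and $R=N-\binom{n+1}{k}$ can be $1$. So your fallback case $N-d(x)\le\binom{n+1}{k}$ occurs as soon as $d(x)\ge R$. There the first-strip bound leaves a deficit of order $(n-k)^2/2$. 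Step (e) yields at most one new union per incidence class of $y$, so about $m$ unions through $x$ at most. It even counts only those with $|V_y\cup A|\ge n$, which you do not control when $y$ is bad for $\mathcal A'$. Step (f) is also unjustified inside the second strip. For triples the increment $U_3(N,n)-U_3(N-d,n)$ can reach $2n+1$ (from $R-d=1$ to $R\ge 2n$), which the degree bound permits. Your count gives only about $n+3$ when $m=n+3$.

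The missing idea is to lower the cutoff on deletion. For triples, Lemma~\ref{lem:ind-two-edge} gives an injection $U\mapsto U\cup e$, with $e$ one of two link edges at $p$, on the $(n-1)$-unions of the surviving family $\mathcal B$. Restoring $p$ gives Corollary~\ref{cor:ind-bridge}: $M_n(\mathcal A)\ge M_{n-1}(\mathcal B)+1$. The survivors, at least $\binom n3+2n-2$ triples for $n\ge8$, lie in the second strip for cutoff $n-1$. The paper therefore inducts on the cutoff as well as the support, and this step alone settles $R\le n$. For $R\ge n+1$, a second deletion and the trace-capacity estimates (Lemmas~\ref{lem:ind-trace} and~\ref{ss3-lem:trace-general}) supply the remaining $2n+1$ unions through the second deleted point. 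The cutoffs $4,5,6$ need the elementary counts of Lemmas~\ref{ss3-lem:four}, \ref{ss3-lem:gap} and Propositions~\ref{ss3-prop:four}--\ref{ss3-prop:six}.

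Your large-support step is also too optimistic. Only unions omitting at most $\lfloor (m-n)/k\rfloor$ cover members are guaranteed to be large. So for $m<n+3k$ you get only $S_2(\lceil m/k\rceil)$, far below $U_k(N,n)$, and nothing is settled for small $n$.

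For (ii), the same strip-boundary jump defeats your scheme, and the thresholds are not derived. The paper instead proves Proposition~\ref{prop:second-short-range}. It deletes points down to a dense core with at most five points exceptional at level $s-3$ and covers these by at most six generators. It then compares $S_3(n+3-6k)$ with $T_{\max}(n)$ for $n\ge 10k$.
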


Section~\ref{sec:support-induction} considers a different reduction:
identifying two support points. This may reduce the number of distinct
generators and cause some unions to fall below the cutoff. A compensation
conjecture asks for enough additional unions to offset the resulting loss
in the model bound. Theorem~\ref{thm:ind-conditional} shows that, for a
fixed \(k\), this conjecture together with the minimum-support bound in
\emph{every} strip implies the layered conjecture on arbitrary support
in every strip. The section also proves deletion and trace estimates
used in the proof for triples.

The link at a point consists of the generators containing it, with that
point removed. For triples this is a graph, and two of its edges give
an injection that restores the point while raising the cutoff. After
a second deletion, we also use the traces of the surviving triples
outside a chosen generator, meaning their parts outside that set.
The surviving triples and these traces supply disjoint collections of
large unions. Bounds at lower cutoffs control the trace levels; an
elementary count for seven to ten triples starts the induction.
Section~\ref{sec:second-small} gives the resulting proof for every
cutoff. For four-sets and five-sets, the dense-family reduction of
Section~\ref{sec:larger-supports} gives the stated thresholds by a shorter
count.

\hypertarget{antichains-and-small-certificates}{%
\section{Antichains and small certificates}\label{antichains-and-small-certificates}}

The first-threshold bound rests on a simple alternative: either many generators contain a common point, or the unions avoiding individual points already give enough large members. We begin with its proof. The complete first-strip theorem will require two refinements of the second part of this argument.

\begin{proof}[Proof of Theorem~\ref{thm:first-threshold}] We induct on \(k\). For \(k=1\), choose \(n+1\) distinct singletons in \(\mathcal F\). Their union, together with the \(n+1\) unions obtained by omitting one singleton, gives the required \(n+2\) members.

Suppose that \(k\ge2\). Let \(\mathcal A\) consist of the \(k\)-sets in \(\mathcal F\), put \(V=\operatorname{supp}(\mathcal A)\), and, for \(x\in V\), write \[
V_x=\bigcup_{\substack{A\in\mathcal A\\x\notin A}}A.
\]

If \(|V_x|<n\) for some \(x\), at most \(\binom{n-1}{k}\) members of \(\mathcal A\) avoid \(x\). Hence at least \[
\binom nk+1-\binom{n-1}{k}
=
\binom{n-1}{k-1}+1
\] members contain \(x\). Remove \(x\) from these sets and apply induction to their union-closure, with \(n-1\) in place of \(n\). This gives at least \(n-k+3\) unions of size at least \(n-1\). Restoring \(x\) gives the same number of distinct members of \(\mathcal F\) of size at least \(n\).

We may therefore suppose that every \(V_x\) has size at least \(n\). To count the distinct sets \(V_x\), place two points in the same class when they belong to exactly the same members of \(\mathcal A\). Let \(q\) be the number of classes. Points in the same class give the same \(V_x\). Conversely, if a member contains \(x\) but not \(y\), then \(x\in V_y\) and \(x\notin V_x\), so the corresponding unions differ. Thus there are \(q\) distinct sets \(V_x\). Each belongs to \(\mathcal F\) and differs from \(V\), giving \(q+1\) large members when \(V\) is included.

It remains to bound \(q\) from below. Every generator is a union of whole classes. Label the classes \(1,\ldots,q\), and replace each point of a generator by its class label. The resulting multiset has size \(k\) and determines the generator uniquely. Since there are \(\binom{q+k-1}{k}\) such multisets, \[
\binom nk+1\le|\mathcal A|\le\binom{q+k-1}{k}.
\] Consequently \(q+k-1>n\), and hence \(q+1\ge n-k+3\). To see sharpness, take all \(k\)-subsets of \([n]\) and the set \([k-1]\cup\{n+1\}\). The large unions are \([n]\), \([n+1]\), and the \(n-k+1\) sets obtained from \([n+1]\) by omitting a point of \([n]\setminus[k-1]\). Thus the bound is attained.
\end{proof}

\subsection{Counting class selections}

We now refine the count of generators in two ways. First, we retain the sizes of the classes just used. Second, we identify each generator by a small collection of large unions. In each case, the objects being counted form an antichain.

Recall that an \emph{antichain} is a family of sets in which no member properly contains another. For an antichain \(\mathcal H\) of subsets of \([q]\), the LYM inequality states that \[
\sum_{I\in\mathcal H}\binom{q}{|I|}^{-1}\le1.
\] Indeed, the initial segments of a permutation are nested, so at most one belongs to \(\mathcal H\). A fixed set \(I\) is an initial segment in \(|I|!(q-|I|)!\) of the \(q!\) permutations. Counting these occurrences proves the inequality; this is Lubell's argument \cite{Lubell1966}. In particular, \[
|\mathcal H|\le\binom{q}{\lfloor q/2\rfloor}.
\] If every member has size at most \(h\) and \(2h\le q+1\), the same argument gives \(|\mathcal H|\le\binom qh\).

Suppose that the classes have sizes \(a_1,\ldots,a_q\). A generator of size \(j\) is determined by a selection of classes whose sizes add to \(j\). We therefore need a bound for \[
N_j(a_1,\ldots,a_q)
=\bigl|\{I\subseteq[q]:\textstyle\sum_{i\in I}a_i=j\}\bigr|.
\] Here \(j\) is an integer and the weights \(a_i\) are positive integers. Thus \(N_0=1\), and \(N_j=0\) for \(j<0\). The selections counted by \(N_j\) form an antichain, since every weight is positive. The following bound will be used when the number of classes is small.

\begin{lemma}[Weighted antichains]\label{lem:weighted-antichains} Let \(r,q,j\) be integers with \(r\ge0\), \(q>2r\), and \(j>r\). Then \[
N_j(a_1,\ldots,a_q)
\le\binom{q+j-r-2}{j}+\binom{q-1}{r}
\] for all positive integers \(a_1,\ldots,a_q\).

\end{lemma}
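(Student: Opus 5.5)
I would prove the bound by induction on \(q\), using the deletion recursion for subset sums. Write \(B(q,j,r)=\binom{q+j-r-2}{j}+\binom{q-1}{r}\) for the right-hand side. Choose an index to delete, say \(q\) (after relabelling; I would take \(a_q\) to be a smallest weight). Separate the selections \(I\) according to whether \(q\in I\). This gives
\[
N_j(a_1,\ldots,a_q)=N_j(a_1,\ldots,a_{q-1})+N_{j-a_q}(a_1,\ldots,a_{q-1}).
\]
Pascal's rule splits the target in the same way:
\[
B(q,j,r)=\Bigl[\tbinom{q+j-r-3}{j}+\tbinom{q-2}{r}\Bigr]+\Bigl[\tbinom{q+j-r-3}{j-1}+\tbinom{q-2}{r-1}\Bigr]=B(q-1,j,r)+B(q-1,j-1,r-1).
\]
So the plan is to bound the first term by \(B(q-1,j,r)\) and the second by \(B(q-1,j-1,r-1)\). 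The hypotheses pass to the second parameter triple, since \(q-1>2(r-1)\) and \(j-1>r-1\). For the first triple, \(j>r\) is preserved, but \(q-1>2r\) fails exactly when \(q=2r+1\).

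For the second term the index is \(j-a_q\), not \(j-1\). I would use that \(B(q',j',r')\) is nondecreasing in \(j'\) for fixed \(q',r'\). Then, whenever \(j-a_q>r-1\), the inductive bound at \((q-1,j-a_q,r-1)\) is at most \(B(q-1,j-1,r-1)\). If instead \(j-a_q\le r-1\), every selection counted has size at most \(j-a_q\le r-1\), because the weights are positive integers. These selections form an antichain in \([q-1]\). Since \(2(r-1)\le q-1\), the LYM consequence stated above bounds them by \(\binom{q-1}{r-1}\). I would then check
\[
\tbinom{q-1}{r-1}\le\tbinom{q+j-r-3}{j-1}+\tbinom{q-2}{r-1},
\]
which amounts to \(\binom{q-2}{r-2}\le\binom{q+j-r-3}{j-1}\). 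This holds since \(j-1\ge r-1\) and \(q+j-r-3\ge q-2\) after a short binomial comparison. The case \(r=0\) for the second term, \(N_{j-a_q}\le\binom{q+j-3}{j-1}\), is handled in the same way: \(N_0=1\), and negative indices give \(0\).

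The base cases are \(r=0\) (the claim \(N_j\le\binom{q+j-2}{j}+1\) for \(q\ge1\), \(j\ge1\)) and small \(q\). I would prove \(r=0\) by the same recursion. There the first term lands on \((q-1,j,0)\), and the condition \(q-1\ge1\) only fails at \(q=1\), where \(N_j\le1\) trivially. For general \(r\) there is also an injective multiset viewpoint: a selection of size \(s\) with sum \(j\) can be encoded as a weak composition. This gives a direct check of small \(q\) if needed.

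The main obstacle is the boundary \(q=2r+1\) in the first term, where the induction hypothesis at \((q-1,j,r)\) is unavailable. There I would bound \(N_j(a_1,\ldots,a_{2r})\) differently. I would split its selections into those of size at most \(r\), an antichain in \([2r]\) bounded by \(\binom{2r}{r}=B\)-part \(\binom{q-1}{r}\), and those of size at least \(r+1\). The latter should be controlled by applying the recursion at level \(r+1\) with \(q-1>2(r+1)\) false as well, so instead I would compare directly: selections of size \(\ge r+1\) with sum \(j\) inject into \(j\)-multisets drawn from a \((q-r-1)\)-set, by removing the \(r+1\) smallest-weight chosen indices and compressing. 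This should give \(\binom{q+j-r-2}{j}\). If that injection does not work cleanly, the fallback is to strengthen the induction hypothesis so that it covers \(q\ge 2r\) with \(\binom{q-1}{r}\) replaced by the maximal antichain size \(\binom{q-1}{\lfloor (q-1)/2\rfloor}\), and then recover the stated form.
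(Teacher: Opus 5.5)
Your plan matches the paper's proof for $q\ge 2r+2$: the deletion recursion, the Pascal split $B(q,j,r)=B(q-1,j,r)+B(q-1,j-1,r-1)$, induction at $(q-1,j-a_q,r-1)$ when $j-a_q\ge r$, and LYM for selections of size at most $r-1$ when $0\le j-a_q<r$. There is a genuine gap at the boundary $q=2r+1$, and the repair you propose cannot close it.

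\textbf{Why your boundary repair fails.} As long as the second term is charged $B(2r,j-1,r-1)$, the split forces you to prove $N_j(a_1,\ldots,a_{2r})\le B(2r,j,r)$. This is false. Take $r=1$, $j=2$, $a_1=a_2=2$: then $N_2=2$, whereas $B(2,2,1)=\binom{1}{2}+\binom{1}{1}=1$. The hypothesis $q>2r$ cannot be dropped. Your substitute bound for the first term, $\binom{2r}{r}$ plus an injection count $\binom{q+j-r-2}{j}$, already equals the whole target $B(2r+1,j,r)=\binom{r+j-1}{j}+\binom{2r}{r}$. That leaves no room for $N_{j-a_q}(a_1,\ldots,a_{2r})$, which is generally positive. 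Moreover, the injection is never actually defined. The fallback with $\binom{q-1}{\lfloor(q-1)/2\rfloor}$ is weaker than the stated bound for $q>2r+1$, and you give no mechanism to recover the stated form.

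\textbf{The missing idea.} Do not recurse at $q=2r+1$; make it the base case. All selections with sum $j$ form an antichain in $[2r+1]$, so LYM gives
\[
 N_j(a_1,\ldots,a_{2r+1})\le\binom{2r+1}{r}.
\]
The target $B(2r+1,j,r)=\binom{r+j-1}{j}+\binom{2r}{r}$ equals $\binom{2r}{r+1}+\binom{2r}{r}=\binom{2r+1}{r}$ at $j=r+1$, and it is nondecreasing in $j$. With these base cases in place, your single induction on $q$, simultaneously over all admissible $r$ and $j$, goes through. The paper organizes the same steps as an outer induction on $r$ with an inner induction on $q$.

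\textbf{A smaller gap at $r=0$.} When $j-a_q\ge1$, the second term must be bounded by $\binom{q+j-3}{j-1}$, with no extra $+1$. Monotonicity in $j'$ only gives $B(q-1,j-1,0)=\binom{q+j-3}{j-1}+1$, which is one too many. Your remark about $N_0=1$ and negative indices covers only $j-a_q\le0$. You need to use the drop in the top entry, as the paper does:
\[
 \binom{q+j-a_q-3}{j-a_q}+1\le\binom{q+j-4}{j-1}+1\le\binom{q+j-3}{j-1}.
\]
The first inequality fixes the complementary index; the second is Pascal's identity, valid for $q\ge2$.
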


\begin{proof} We induct on \(r\). For positive \(r\), a second induction on the length of the weight list will suffice. The common recurrence separates selections according to whether they use the last weight. Put \(a=a_q\). Then
\[
N_j(a_1,\ldots,a_q)
=N_j(a_1,\ldots,a_{q-1})
+N_{j-a}(a_1,\ldots,a_{q-1}).
\]
For the initial case \(r=0\), we use induction on \(q+j\). The cases \(q\le2\) follow from the antichain property, and the case \(j=1\) counts only singleton selections. For \(q\ge3\) and \(j\ge2\), induction bounds the first summand by \(\binom{q+j-3}{j}+1\). If \(a<j\), it bounds the second by
\[
\binom{q+j-a-3}{j-a}+1
\le\binom{q+j-4}{j-1}+1
\le\binom{q+j-3}{j-1}.
\]
Here the first comparison fixes the complementary index \(q-3\), and the second is Pascal's identity. If \(a\ge j\), the second summand is at most one. Adding proves the case \(r=0\).

Now suppose that \(r\ge1\), with the result known for smaller \(r\). We induct on \(q\). The first value is \(q=2r+1\), where the LYM inequality gives \(N_j\le\binom{2r+1}{r}\). The proposed upper bound equals this at \(j=r+1\) and is nondecreasing for larger \(j\).

Let \(q\ge2r+2\). Induction with parameter \(r\) bounds the first summand by
\[
\binom{q+j-r-3}{j}+\binom{q-2}{r}.
\]
We claim that the second summand is at most
\[
\binom{q+j-r-3}{j-1}+\binom{q-2}{r-1}.
\]
For \(j-a\ge r\), this follows from induction with parameter \(r-1\), using the same fixed-complementary-index comparison as above. For \(0\le j-a<r\), every selection has at most \(r-1\) elements, so the LYM inequality gives the bound \(\binom{q-1}{r-1}\). This is no larger than the displayed expression: use \(q-1\ge2r+1\), \(j\ge r+1\), and Pascal's identity. For \(j-a<0\), the summand is zero. Adding the two estimates and applying Pascal's identity proves the lemma.
\end{proof}

\subsection{Identifying generators}

We next count generators by identifying them from large unions. For this purpose, let \(\mathcal A\) be a nonempty family of \(k\)-sets and let \(n=k+p\), where \(p\ge1\). These assumptions remain in force for the rest of the section. Put \[
V=\operatorname{supp}(\mathcal A),
\qquad \mathcal B=\langle\mathcal A\rangle_{\ge n}.
\] As in the proof of Theorem~\ref{thm:first-threshold}, for \(x\in V\) write \[
V_x=\bigcup_{\substack{A\in\mathcal A\\x\notin A}}A.
\] Call \(x\) \emph{bad} if \(|V_x|<n\). When there are no bad points, the large unions distinguish every generator \(A\): for each \(x\in V\setminus A\), the union \(V_x\) contains \(A\) and excludes \(x\). Rather than use all these unions, we shall choose at most \(p+1\) inclusion-minimal ones.

\begin{definition}\label{def:certificate} A \emph{cover} of \(A\in\mathcal A\) is an inclusion-minimal member of \(\mathcal B\) containing \(A\). A collection \(\mathcal S\subseteq\mathcal B\) is a \emph{certificate} for \(A\) if \(A\) is the only member of \(\mathcal A\) contained in every member of \(\mathcal S\).

\end{definition}

\begin{lemma}[Small certificates]\label{lem:small-certificates} Suppose that there are no bad points. Each \(A\in\mathcal A\) has a certificate \(\mathcal S_A\) consisting of at most \(p+1\) covers, all different from \(V\). These certificates form an antichain of subsets of \(\mathcal B\setminus\{V\}\).

\end{lemma}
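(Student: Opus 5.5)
Fix \(A\in\mathcal A\). We build \(\mathcal S_A\) greedily and keep track of a shrinking family of rivals. Put \(\mathcal R_0=\mathcal A\setminus\{A\}\), the generators other than \(A\) still contained in every chosen cover.

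While \(\mathcal R_i\neq\varnothing\), pick \(B\in\mathcal R_i\). Since \(B\ne A\) and \(|A|=|B|=k\), there is a point \(x\in B\setminus A\). Because there are no bad points, \(V_x\in\mathcal B\) contains \(A\) and omits \(x\). Choose a cover \(C\subseteq V_x\) of \(A\); it is inclusion-minimal in \(\mathcal B\) among members containing \(A\). Then \(C\) omits \(x\), hence \(B\not\subseteq C\), and \(C\ne V\) since \(x\in V\). Set \(\mathcal R_{i+1}=\{D\in\mathcal R_i:D\subseteq C\}\).

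The key quantitative point is to bound the number of steps by \(p+1\). For this, choose \(x\) more carefully so that the intersection of the chosen covers shrinks. Let \(W_i\) be the intersection of the covers chosen so far, with \(W_0=V\). Pick \(x\in W_i\setminus A\) such that some rival \(D\in\mathcal R_i\) contains \(x\); such an \(x\) exists because each rival is contained in \(W_i\) and differs from \(A\). The new cover omits \(x\), so \(W_{i+1}\subsetneq W_i\).

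This alone only bounds the steps by \(|V|-k\), so the counting needs a sharper step. Note that \(|C|\ge n=k+p\) while \(C\supseteq A\). The first cover \(C_1\) has at most \(|C_1|-k\) points outside \(A\). I would try to arrange that the first cover is any cover and that subsequent covers are chosen inside \(C_1\). Each later step removes at least one point of \(W_i\setminus A\). Every remaining rival lies in \(W_i\), is a \(k\)-set, and differs from \(A\), so it needs at least one point of \(W_i\setminus A\). Hence once \(W_i=A\) there are no rivals.

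This still gives \(|C_1|-k\) steps, which could exceed \(p\) if covers are much larger than \(n\). \textbf{This is the main obstacle}: controlling the size of covers. I would argue that a cover \(C\) of \(A\) has size close to \(n\) as follows. If \(|C|>n\), then \(C\) is a union of generators, and removing one generator's contribution, or taking \(V_y\cap\)-type unions inside \(C\), gives a smaller large union containing \(A\), contradicting minimality. Concretely, \(C\) is a union of generators, and \(C\setminus\{y\}\) for \(y\notin A\) is covered by the generators in \(C\) avoiding \(y\). If that union has size \(\ge n\) and contains \(A\), minimality is contradicted.

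Failing a clean size bound, I would count differently. Let the first cover be arbitrary. Then repeatedly choose a rival \(D\) and a point \(x\in D\setminus A\), taking the cover inside the union of generators in the current cover that avoid \(x\). The process terminates within \(p+1\) steps because each step removes a point while the union must stay \(\ge k+p\) only relative to \(V_x\). I expect the right bookkeeping to give exactly \(p+1\).

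Finally, the antichain property. Suppose \(\mathcal S_A\subseteq\mathcal S_{A'}\) with \(A\ne A'\). Every member of \(\mathcal S_{A'}\) contains \(A'\), so every member of \(\mathcal S_A\) contains \(A'\). Since \(\mathcal S_A\) is a certificate, this forces \(A'=A\), a contradiction. Hence the certificates are distinct and pairwise incomparable. They lie in \(\mathcal B\setminus\{V\}\) because every chosen cover omits a point of \(V\).
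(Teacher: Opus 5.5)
There is a genuine gap: the bound of $p+1$ on the number of covers is never established, and the route you try for it cannot work. Covers need not have size close to $n$. Take three pairwise disjoint $k$-sets $A_1,A_2,A_3$ with $p=1$. Every $V_x$ has size $2k\ge n=k+1$, so there are no bad points. Yet the covers of $A_1$ are $A_1\cup A_2$ and $A_1\cup A_3$, of size $2k$, far above $n$ when $k$ is large. The same example refutes your claim that $C\setminus\{y\}$ is covered by the generators in $C$ avoiding $y$: for $y\in A_2$, only $A_1$ remains. Tracking $|W_i\setminus A|$ therefore only bounds the number of steps by roughly $|C_1|-k$, which can be about $k$. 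Your closing assertion that the process ``terminates within $p+1$ steps'' has no argument behind it.

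The missing idea is to use the minimality of two distinct covers simultaneously. Your greedy procedure does produce two distinct covers $C_1,C_2$, since $C_2$ omits a point of a rival lying in $C_1$. Let $K$ be the union of all generators contained in $C_1\cap C_2$. This is a generated set containing $A$ and contained in both covers. If $|K|\ge n$, then $K\in\mathcal B$, and minimality forces $C_1=K=C_2$, a contradiction. Hence $|K|\le n-1$, and so $|K\setminus A|\le p-1$.

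Every remaining rival lies in $K$, so each later step chooses its point $x$ in $K\setminus A$. These points are distinct, because surviving rivals avoid all previously chosen points. This gives at most $2+(p-1)=p+1$ covers.

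The paper argues the same way. It first shows that two distinct covers exist: if $B_0$ were the only cover, any $x\in B_0\setminus A$ would give a cover inside $V_x$. It then excludes the at most $p-1$ points of $K\setminus A$ by covers inside the sets $V_x$. Your observation that no chosen cover equals $V$, and your antichain argument, are correct and match the paper.
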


\begin{proof} Two covers will reduce the possible competing generators to a set of fewer than \(n\) points. Further covers will then exclude the unwanted points individually.

We first show that \(A\) has two distinct covers. At least one exists, since \(V\) is a large union containing \(A\). If \(B_0\) were the unique cover, choose \(x\in B_0\setminus A\); such a point exists because \(n>k\). The large union \(V_x\) contains \(A\) and therefore contains a cover of \(A\). This cover excludes \(x\), contradicting uniqueness. Thus there are at least two covers, and their minimality implies that none is \(V\).

Choose distinct covers \(B_1,B_2\) of \(A\), and put \[
K=\bigcup\{A'\in\mathcal A:A'\subseteq B_1\cap B_2\}.
\] The generators still indistinguishable from \(A\) after choosing \(B_1\) and \(B_2\) all lie in \(K\). This is a generated union containing \(A\). If \(|K|\ge n\), the minimality of the covers would give \(K=B_1=B_2\). Hence \(|K|\le n-1\), so at most \(p-1\) points of \(K\) lie outside \(A\).

We exclude these remaining points one at a time. For each \(x\in K\setminus A\), choose a cover \(B_x\) of \(A\) contained in \(V_x\). Then \[
\mathcal S_A=\{B_1,B_2\}\cup\{B_x:x\in K\setminus A\}
\] has at most \(p+1\) members. Any generator contained in all of them lies in \(K\) and avoids \(K\setminus A\). It is therefore contained in \(A\), and equals \(A\) because all generators have size \(k\). Thus \(\mathcal S_A\) is a certificate. Finally, if \(\mathcal S_A\subseteq\mathcal S_{A'}\), then \(A'\) is contained in every member of \(\mathcal S_A\), so \(A'=A\). This proves the lemma.
\end{proof}

The certificates avoid the full support. To sharpen their count, we shall also set aside one proper large union. The generators for which that union is a cover must be counted separately.

\begin{lemma}[The certificate bound]\label{lem:certificate-bound} Suppose that there are no bad points, and let \(q\ge2p+1\) be an integer. If \(|\mathcal B|\le q+2\), then \[
|\mathcal A|\le\binom nk+\binom{q}{p+1}.
\]

\end{lemma}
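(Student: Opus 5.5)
The plan is to split \(\mathcal A\) according to whether a certificate can avoid one set-aside large union, and to count each part separately. By Lemma~\ref{lem:small-certificates}, every generator \(A\) has a certificate \(\mathcal S_A\) of at most \(p+1\) covers, all different from \(V\), and these certificates form an antichain in \(\mathcal B\setminus\{V\}\). That family has at most \(q+1\) members. A direct LYM count on \(q+1\) points would give only \(\binom{q+1}{p+1}\), which is too weak, so I would remove one more member \(W\in\mathcal B\setminus\{V\}\) from the ground set.

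I would take \(W\) to be a member of \(\mathcal B\setminus\{V\}\) of minimum size. This choice makes \(W\) inclusion-minimal in \(\mathcal B\), so \(W\) is a cover of every generator it contains. Then I split \(\mathcal A\) into two classes: \(\mathcal A_1\), the generators having some certificate of at most \(p+1\) covers inside \(\mathcal B\setminus\{V,W\}\), and \(\mathcal A_2\), the rest. Every member of \(\mathcal A_2\) has \(W\) among its covers, so it is contained in \(W\).

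For \(\mathcal A_1\), the chosen certificates still form an antichain, by the argument closing the proof of Lemma~\ref{lem:small-certificates}. They are subsets of a ground set of at most \(q\) elements, each of size at most \(p+1\). Since \(2(p+1)\le q+1\) follows from \(q\ge 2p+1\), the restricted LYM bound recorded before Lemma~\ref{lem:weighted-antichains} gives \(|\mathcal A_1|\le\binom{q}{p+1}\).

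It remains to show \(|\mathcal A_2|\le\binom nk\), and this is the main obstacle. If \(|W|=n\), the bound is immediate, since \(\mathcal A_2\) consists of \(k\)-subsets of \(W\). In general, I would rerun the construction of Lemma~\ref{lem:small-certificates} for \(A\subseteq W\) while trying to avoid \(W\). The proof there shows that \(A\) has at least two covers. Moreover, for each \(x\in W\setminus A\), the union \(V_x\) contains a cover of \(A\) other than \(W\); this uses that there are no bad points. So \(W\) can always be replaced in the role of \(B_1\) or \(B_x\) by another cover. The only way \(W\) becomes indispensable is when, for a second cover \(B\neq W\), the set
\[
K=\bigcup\{A'\in\mathcal A:A'\subseteq W\cap B\}
\]
can only be cut down to \(A\) by using \(W\) itself, that is, when \(A\) is pinned inside \(W\cap B\) with \(|K|\le n-1\).

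I would first try to show that all generators in \(\mathcal A_2\) lie in one \(n\)-subset of \(W\), or else inject \(\mathcal A_2\) into \(\binom{[n]}{k}\). For the latter, I would use the complementary points \(x\in W\setminus A\) and the distinct unions \(V_x\cap W\). If that fails, the fallback is a different choice of \(W\): take a large union of size exactly \(n\) if one exists, and otherwise argue that all proper large unions have size greater than \(n\) and bound \(\mathcal A_2\) through the incidence classes as in Theorem~\ref{thm:first-threshold}. Adding the two estimates then yields \(|\mathcal A|\le\binom nk+\binom{q}{p+1}\).
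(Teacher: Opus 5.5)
Your decomposition and your count for $\mathcal A_1$ are correct and match the paper's proof. A minimum-size $W\in\mathcal B\setminus\{V\}$ is inclusion-minimal in $\mathcal B$. A generator for which $W$ is not a cover keeps the certificate of Lemma~\ref{lem:small-certificates} inside $\mathcal B\setminus\{V,W\}$. The bounded-size LYM inequality on at most $q$ unions then gives $\binom{q}{p+1}$.

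The gap is the bound $|\mathcal A_2|\le\binom nk$. You flag it as the main obstacle but leave only a list of strategies, and the first one is false. Take $k=2$, $n=3$ and $\mathcal A=\{\{1,2\},\{3,4\},\{5,6\}\}$. There are no bad points and $|\mathcal B|=4$, so the hypotheses hold with $q=3$. Here $W=\{1,2,3,4\}$ is a minimum-size proper large union, and both $\{1,2\}$ and $\{3,4\}$ lie in $\mathcal A_2$, yet they lie in no common $3$-subset of $W$. Your other routes are not carried out: the injection is unspecified, re-choosing $W$ is left open, and rerunning the incidence-class count on $\mathcal A_2$ alone may meet bad points of that subfamily. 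That last obstacle is exactly why Theorem~\ref{thm:first-threshold} needed its induction.

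The missing idea is that no new argument is needed, because Theorem~\ref{thm:first-threshold} applies directly.
\begin{itemize}
\item Every member of $\mathcal A_2$ lies in $W$, and $W$ is inclusion-minimal in $\mathcal B$.
\item So any union of members of $\mathcal A_2$ of size at least $n$ is a member of $\mathcal B$ contained in $W$, and hence equals $W$.
\item Thus the union-closed family $\langle\mathcal A_2\rangle$ has at most one member of size at least $n$.
\item More than $\binom nk$ $k$-sets would force at least $n-k+3\ge4$ such members.
\end{itemize}
Hence $|\mathcal A_2|\le\binom nk$, whatever the size of $W$. This is the paper's proof. There $W$ is any member of $\mathcal B\setminus\{V\}$, the exceptional class is the set of generators having $W$ as a cover, and the same minimality argument applies.
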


\begin{proof} Choose \(B_*\in\mathcal B\setminus\{V\}\); such a member exists by the preceding lemma. Let \(\mathcal D\) consist of the generators for which \(B_*\) is a cover. Any large union of members of \(\mathcal D\) is contained in \(B_*\) and contains one of these generators. By minimality of the cover, it must equal \(B_*\). Thus \(\langle\mathcal D\rangle\) has at most one large member, and Theorem~\ref{thm:first-threshold} gives \(|\mathcal D|\le\binom nk\).

For every remaining generator, Lemma~\ref{lem:small-certificates} supplies a certificate using neither \(V\) nor \(B_*\), since every member of the certificate is a cover. These certificates form an antichain of sets of size at most \(p+1\), drawn from at most \(q\) large unions. The assumption \(q\ge2p+1\) allows the bounded-size form of the LYM inequality, giving \(|\mathcal A\setminus\mathcal D|\le\binom q{p+1}\). Adding the bound for \(\mathcal D\) proves the lemma.

\end{proof}

\hypertarget{the-first-strip}{%
\section{The first strip}\label{the-first-strip}}

We now prove Theorem~\ref{thm:first-strip}. As in the first-threshold proof, we remove bad points until every union avoiding a point is large. This reduction preserves both the difference between the cutoff and the generator size, and the lower bound on the excess number of generators. The two estimates of Section~\ref{antichains-and-small-certificates} then give the successive steps of the max-lex count.

\hypertarget{the-max-lex-count}{%
\subsection{The max-lex count}\label{the-max-lex-count}}

We first compute the number predicted by the conjecture. Put \(p=n-k\) and assume that \(N>\binom nk\). In the \(t\)-th strip, write \[
N=\binom{n+t-1}{k}+R,
\qquad 1\le R\le\binom{n+t-1}{k-1}.
\] Thus \(R\) counts the generators containing the largest point \(n+t\). As these generators are added, the number of large unions increases in steps. We describe this staircase, beginning with the first strip.

In the first strip, \(\mathcal F_k(N)\) contains all the \(k\)-subsets of \([n]\) and \(R\) sets containing \(n+1\). Deleting \(n+1\) from the latter gives the first \(R\) \((k-1)\)-subsets of \([n]\) in lexicographic order. For \(p+1\le s\le n\), the \(\binom{s}{p+1}\) \((k-1)\)-subsets containing \([n-s]\) come first in this order. Hence the first \(R\) sets have common intersection \([n-s]\) when \[
\binom{s-1}{p+1}<R\le\binom{s}{p+1}.
\]

An \(n\)-subset of \([n+1]\) containing \(n+1\) is generated exactly when it contains one of the generators through \(n+1\). Equivalently, its missing point must lie outside the common intersection just found. There are \(s\) such sets. The two other large members are \([n]\) and \([n+1]\). Therefore, on this interval, \[
U_k(N,n)=s+2=2+\sigma_{p+1}(R).
\] The first strip has \(k\) steps.

Similarly, in the second strip, \(R=N-\binom{n+1}{k}\). For \(p+1\le r<s\le n+1\), we have \[
U_k(N,n)=n+4+\binom{s}{2}+r
\] on the interval \[
\binom{s-1}{p+2}+\binom{r-1}{p+1}
<R\le
\binom{s-1}{p+2}+\binom{r}{p+1}.
\] There are \(\binom{k+1}{2}\) steps.

For the general formula, we use the binomial sums \[
S_j(a)=\sum_{\ell=0}^{j}\binom{a}{\ell}
\qquad(a,j\text{ nonnegative integers}).
\] Roberts's incremental counts for max-lex unions are recorded in
McLeod's thesis \cite[Theorem~5.15]{McLeod2002}. The following formula
groups the counts into steps in a general strip.

\Needspace{12\baselineskip}
\begin{lemma}\label{lem:model-count} Let \(n\ge k\ge1\), \(N>\binom nk\), \(p=n-k\), \(t=\sigma_k(N)-n\), and \(R=N-\binom{n+t-1}{k}\). For integers \(p+1\le s_1<s_2<\cdots<s_t\le n+t-1\), put \[
B=\sum_{i=1}^{t}\binom{s_i-1}{p+i}.
\] On the interval \(B<R\le B+\binom{s_1-1}{p}\), we have \[
U_k(N,n)
=1+S_{t-1}(n+t-1)+\sum_{i=1}^{t}S_i(s_i-1).
\] These intervals partition the \(t\)-th strip into \(\binom{k+t-1}{t}\) steps.

\end{lemma}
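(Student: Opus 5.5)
Lemma~\ref{lem:model-count} can be established by describing \(\langle\mathcal F_k(N)\rangle\) explicitly and counting its large members by their complements inside \([n+t]\). Write \(m=n+t\). Then \(\mathcal F_k(N)\) consists of all \(k\)-subsets of \([m-1]\) together with \(R\) sets containing \(m\). Deleting \(m\) from the latter gives the first \(R\) \((k-1)\)-subsets of \([m-1]\) in max-lex order; call this family \(\mathcal L\). A large generated union either avoids \(m\) or contains it. Those avoiding \(m\) are exactly the subsets of \([m-1]\) of size at least \(n\) that are unions of \(k\)-sets. Since \(n\ge k\), these are all subsets of \([m-1]\) of size \(n,\dots,m-1\). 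Their number is \(\sum_{j=0}^{t-1}\binom{m-1}{j}=S_{t-1}(n+t-1)\).

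Those containing \(m\) are the sets \(\{m\}\cup W\), where \(W\subseteq[m-1]\), \(|W|\ge n-1\), and \(W\) contains some member of \(\mathcal L\). Any such \(W\) is automatically the union of that member with \(k\)-subsets of \(W\), because \(|W|\ge k\). So the count is \(1\), for \(W=[m-1]\), plus the number of sets \(W\) with complement \(C=[m-1]\setminus W\) of size \(1\le |C|\le t\) for which some member of \(\mathcal L\) is disjoint from \(C\).

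The main step is to count, for each \(i=1,\dots,t\), the \(i\)-sets \(C\subseteq[m-1]\) that avoid some member of \(\mathcal L\). Equivalently, we need the number of \((m-1-i)\)-supersets of members of \(\mathcal L\), that is, the upper shadow of the initial max-lex segment \(\mathcal L\) at level \(k-1+(t-i)=n-1+(t-i)-\ldots\). I would pass to complements. The complements of the members of \(\mathcal L\) are \((p+t)\)-subsets of \([m-1]\) forming an initial segment of the reversed order, i.e.\ a colex-type initial segment. The sets \(C\) are exactly the \(i\)-subsets of these complements, so the count is the \(i\)-th lower shadow of an initial colex segment of \((p+t)\)-sets.

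Next I use the cascade decomposition. Writing \(R\) in the greedy form \(R=\sum_i\binom{s_i-1}{p+i}+(\text{remainder }\le\binom{s_1-1}{p})\) corresponds, by Kruskal--Katona structure, to the family of complements being a union of full "blocks". Each block has a fixed prefix, and its free part is all \((p+i)\)-subsets of an \((s_i-1)\)-set. The shadow at level \(j\) of such a nested union is then computed block by block. The \(i\)-th block contributes those \(j\)-sets whose "new" part lies in \([s_i-1]\). Summing over levels \(j=1..t\) and blocks, I expect a telescoping that gives \(\sum_i S_i(s_i-1)\), with the constant \(\binom{s_i-1}{0}\) terms absorbing the contribution of \(W=[m-1]\) and of the level \(|C|\) bookkeeping. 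I would check this first for \(t=1,2\) against the formulas already derived in the text for the first and second strips, namely \(s+2\) and \(n+4+\binom s2+r\), to fix the indexing.

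The main obstacle is this shadow computation. Specifically, I must verify that the remainder range \(B<R\le B+\binom{s_1-1}{p}\) leaves all shadows constant. Each additional set in the partial block adds no new \(i\)-set for \(i\le t\), because it only increases a level-\((p+1)\) block whose shadows at levels \(\le t\) already saturate. This saturation uses \(p\ge0\) and \(s_1\ge p+1\).

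Finally, the count of steps follows from the bijection between admissible sequences \(p+1\le s_1<\dots<s_t\le n+t-1\) and \(t\)-subsets of an interval of length \(n+t-1-p=k+t-1\). This gives \(\binom{k+t-1}{t}\) steps. The intervals tile \((0,\binom{n+t-1}{k-1}]\) by the uniqueness of the cascade representation, together with the identity \(\sum_{i}\binom{s_i-1}{p+i}+\binom{s_1-1}{p}\) matching the next \(B\). This identity is proved by the hockey-stick identity.
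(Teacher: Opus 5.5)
Your outline follows the paper's route. You split at $m=n+t$, count the $S_{t-1}(n+t-1)$ unions avoiding $m$, and complement and reverse labels so that the link becomes the initial colex segment of $q$-sets with $q=p+t$. You then count its $j$-subsets for $j\le t$, group by the $t$ largest elements $s_1<\dots<s_t$, and count steps as $t$-subsets of $\{p+1,\dots,n+t-1\}$. Your hockey-stick check that consecutive intervals abut is correct, and a little more explicit than the paper's remark that the blocks partition the $q$-sets by their top $t$ elements.

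There is one slip in the setup. Deleting $m$ leaves the first $R$ $(k-1)$-sets in \emph{lexicographic} order, not max-lex, since all these generators have maximum $m$. With max-lex the colex identification would fail. For instance, with $k=3$, $m=5$, $R=3$, max-lex gives $\{1,2\},\{1,3\},\{2,3\}$, whose reversed complements $\{1,2\},\{1,3\},\{1,4\}$ are not an initial colex segment.

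The real gap is that the formula itself is never derived. You ``expect a telescoping'' and plan to fix indices from $t=1,2$. Your reason for constancy on a block (a saturated ``level-$(p+1)$ block'') is not an argument, since within a block only a $p$-subset of $[s_1-1]$ varies. A piece-by-piece count over the cascade would also have to handle empty pieces ($s_i=p+i$), where $\{s_{t-j+1},\dots,s_t\}$ enters the shadow only later.

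The missing idea is the one the paper uses. The $j$-shadow of an initial colex segment is again an initial colex segment. Its last member is the set of the $j$ largest elements of the last $q$-set, because if $A$ precedes $B$ in colex, the $j$ largest elements of $A$ weakly precede those of $B$. The $j$-sets weakly preceding $\{c_1<\dots<c_j\}$ number $1+\sum_{l=1}^{j}\binom{c_l-1}{l}$. With $c_l=s_{t-j+l}$, the $j$-shadow therefore has $1+\sum_{i=t-j+1}^{t}\binom{s_i-1}{i-t+j}$ members for every $R$ in the block, which gives constancy at once.

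Summing over $1\le j\le t$ and interchanging the sums yields $t+\sum_{i=1}^{t}\sum_{l=1}^{i}\binom{s_i-1}{l}$, and the empty set ($W=[m-1]$) adds $1$. The total $1+\sum_{i}S_i(s_i-1)$ comes from a change in the order of summation, not a telescoping. The $t$ terms $\binom{s_i-1}{0}$ account for the sets $\{s_{t-j+1},\dots,s_t\}$, not for $W=[m-1]$. The same description shows that the count rises at the start of every block, because the $t$-shadow gains $\{s_1,\dots,s_t\}$; this is what makes the intervals genuine steps.
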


\begin{proof}
Put $m=n+t$ and $q=m-k=p+t$. The generators avoiding $m$ form
the complete $k$-layer on $[m-1]$, and give $S_{t-1}(m-1)$ large
unions. A set containing $m$ is generated if and only if it contains
one of the generators through $m$. Indeed, after choosing that
generator, the other points can be covered by $k$-sets avoiding $m$;
at size $k$, the set must itself be a generator.

Delete $m$ from the remaining generators, complement in $[m-1]$,
and reverse the labels. Their images are the first $R$ $q$-sets in
\emph{colex order}: of two sets, the one excluding the largest point
where they differ comes first. Unions through $m$ of size at least
$n$ now correspond to subsets of size at most $t$ of these $q$-sets.

The colex $q$-sets with fixed $t$ largest elements
$s_1<\cdots<s_t$ form a block. Its first member is
$[p]\cup\{s_1,\ldots,s_t\}$, its preceding members number $B$,
and its length is $\binom{s_1-1}{p}$. These statements follow by
successively fixing the largest element in the colex order.

The $j$-subsets of an initial colex segment themselves form an
initial colex segment. For $1\le j\le t$, its last member is the set
of the $j$ largest elements of the last $q$-set. Hence, throughout
the present block, their number is
\[
 1+\sum_{i=t-j+1}^{t}\binom{s_i-1}{i-t+j}.
\]
Adding these counts for $j=1,\ldots,t$, and then the empty subset,
gives $1+\sum_{i=1}^{t}S_i(s_i-1)$. Together with the unions
avoiding $m$, this is the asserted formula.

Finally, the blocks partition all $q$-sets of $[m-1]$. There are
$\binom{k+t-1}{t}$ choices of their $t$ largest elements. Each block
introduces the new $t$-set $\{s_1,\ldots,s_t\}$, so each starts a
new step. This proves both the formula and the partition.
\end{proof}

\subsection{Proof of the first-strip bound}

\begin{proof}[Proof of Theorem~\ref{thm:first-strip}] If \(n=k\), the \(N\) generators and their full union give the required \(N+1\) members. The case \(N=\binom nk+1\) is Theorem~\ref{thm:first-threshold}. We may therefore assume that \(n>k\) and \(R=N-\binom nk\ge2\). Let \(\mathcal A\) consist of the \(k\)-sets in \(\mathcal F\), and put \[
p=n-k,\qquad s=\sigma_{p+1}(R),\qquad
h=\binom{s-1}{p+1}.
\] Thus \(s\) specifies the step containing \(R\), and \(h\) is the last excess before this step begins. We have \(p+2\le s\le n\) and \(R\ge h+1\). We must find at least \(s+2\) large unions.

\emph{Reduction to a family with no bad points.} During the reduction, write \(\kappa\) for the size of the generators and \(\lambda=\kappa+p\) for the cutoff. We keep at least \(\binom\lambda\kappa+h+1\) generators. If \(x\) is bad, at most \(\binom{\lambda-1}{\kappa}\) of them avoid \(x\). The \emph{link at \(x\)}, obtained by deleting \(x\) from the generators containing it, therefore has at least \[
\binom\lambda\kappa+h+1-\binom{\lambda-1}{\kappa}
=\binom{\lambda-1}{\kappa-1}+h+1
\] members. The generator size and the cutoff both decrease by one, while \(p\) and \(h\) remain fixed. Restoring \(x\) sends distinct unions of size at least \(\lambda-1\) to distinct unions of size at least \(\lambda\). Thus a lower bound for the link gives the same lower bound for the preceding family.

The process stops before the generator size reaches zero. Indeed, when \(\kappa=1\), a bad point would have to belong to at least \(h+2\) distinct singletons, which is impossible. We therefore reach a family \(\mathcal C\) of \(\kappa\)-sets, with cutoff \(\lambda=\kappa+p\), no bad points, and \begin{equation}\label{eq:reduced-first-strip}
|\mathcal C|\ge\binom\lambda\kappa+h+1.
\end{equation} It remains to find \(s+2\) unions of size at least \(\lambda\) in this family. Write \(V=\operatorname{supp}(\mathcal C)\) and \(\mathcal B=\langle\mathcal C\rangle_{\ge\lambda}\). Suppose, for a contradiction, that \(|\mathcal B|\le s+1\).

The certificate bound requires \(s-1\ge2p+1\). We use class selections for the earlier steps and certificates for the later ones.

\emph{Case \(s\le2p+1\).} Put \(r=s-p-2\), so that \(0\le r<p\) and \[
h=\binom{s-1}{r}.
\] Group the points of \(V\) according to which generators contain them, as in Theorem~\ref{thm:first-threshold}, and let \(q\) be the number of classes. Every generator is a union of whole classes. For a class \(D\), write \(V_D=V_x\) for any \(x\in D\). The \(q\) sets \(V_D\) are distinct proper members of \(\mathcal B\). Including \(V\) gives \(q+1\) large unions, so \(q\le s\).

If \(\kappa>r\), extend the list of class sizes to length \(s\) by adding weights \(\kappa+1\). These weights cannot enter a selection of total size \(\kappa\). Lemma~\ref{lem:weighted-antichains} applies to the resulting list, since \(s=p+r+2>2r\), and gives \[
|\mathcal C|
\le\binom{s+\kappa-r-2}{\kappa}+\binom{s-1}{r}
=\binom\lambda\kappa+h,
\] contrary to \eqref{eq:reduced-first-strip}.

The remaining case \(\kappa\le r\) lies outside Lemma~\ref{lem:weighted-antichains}. Here we first show that the already counted unions exhaust \(\mathcal B\). For if \(q\le s-1\), the class selections forming generators are an antichain on at most \(s-1\) objects. Each has at most \(\kappa\) members, and \(2\kappa\le2r<s-1\). The LYM inequality gives \[
|\mathcal C|\le\binom{s-1}{\kappa}
\le\binom{s-1}{r}=h,
\] again a contradiction. Thus \(q=s\), and the \(s+1\) sets \(V,V_D\) account for every member of \(\mathcal B\).

We now find a large union outside this list. Fix a class \(D\) and discard all generators containing it. There are at most \(h\) such generators: after removing \(D\), their remaining class selections form an antichain on \(s-1\) objects, each with at most \(\kappa-1\le r\) members. The LYM inequality bounds their number by \(\binom{s-1}{r}=h\). By \eqref{eq:reduced-first-strip}, more than \(\binom\lambda\kappa\) generators survive. Theorem~\ref{thm:first-threshold} supplies at least \[
\lambda-\kappa+3=p+3
\] large unions disjoint from \(D\).

At most \(\kappa\) unions in the list \(V,V_E\) are disjoint from \(D\). Indeed, \(V\) meets \(D\). Fix a generator containing \(D\). If \(V_E\) is disjoint from \(D\), this generator must contain \(E\); otherwise it would be included in \(V_E\). A generator contains at most \(\kappa\) classes, so there are at most \(\kappa\) possible classes \(E\). Since \(\kappa\le r<p\), the \(p+3\) unions just found cannot all occur in the list. This is a contradiction.

\emph{Case \(s\ge2p+2\).} The certificate count now applies. Use Lemma~\ref{lem:certificate-bound} for \(\mathcal C\) with cutoff \(\lambda\) and \(q=s-1\). Its hypotheses hold because \(q\ge2p+1\) and \(|\mathcal B|\le s+1=q+2\). It gives \[
|\mathcal C|\le\binom\lambda\kappa+\binom{s-1}{p+1}
=\binom\lambda\kappa+h,
\] which contradicts \eqref{eq:reduced-first-strip}.

Thus \(|\mathcal B|\ge s+2\) in every case. Restoring the deleted points gives at least \(s+2\) members of \(\mathcal F_{\ge n}\). The max-lex count gives equality, and Theorem~\ref{thm:first-strip} is proved.

\end{proof}

\hypertarget{small-intersections}{%
\section{Small intersections}\label{small-intersections}}

Taking complements on \(n+t\) points turns unions of size at least
\(n\) into intersections of size at most \(t\). We shall count these
intersections and prove Theorem~\ref{thm:second-minimum} by taking
\(t=2\).

There are two steps. First, the excess number of complementary sets
forces linear relations among the vectors recording their one-point
deletions. Each set occurring in
such a relation has all its proper subsets realised as intersections.
A shadow inequality of Björner and Kalai then counts these subsets.
Second, we must account for ground-set points which are not themselves
singleton intersections. Deleting such a point either produces a
realised pair in its place or increases the excess enough to force an
additional intersection.

\begin{definition}\label{def:small-intersections} For a family \(\mathcal J\), let \(\mathcal K(\mathcal J)\) consist of the intersections of nonempty subfamilies of \(\mathcal J\). We call these the \emph{realised intersections}. In particular, the empty set is included only when some nonempty subfamily has empty intersection. We use the level and tail subscripts defined in the introduction.

The numerical bounds will use counts of subsets of an initial colex segment, as in the proof of Lemma~\ref{lem:model-count}. Recall that \(A\) precedes \(B\) in \emph{colex order} if the largest integer belonging to exactly one of them belongs to \(B\). Let \(\mathcal I_q(R)\) be the first \(R\) \(q\)-sets in this order, where \(q\ge1\) and \(R\ge0\).

The \emph{\(j\)-shadow} of a family consists of the \(j\)-sets contained in its members. For \(0\le j\le q\), write \[
h_j^{(q)}(R)
=\bigl|\{B:|B|=j,\ B\subseteq A
\text{ for some }A\in\mathcal I_q(R)\}\bigr|.
\] All these functions vanish at \(R=0\), and \(h_0^{(q)}(R)=1\) for \(R>0\).
\end{definition}

The functions \(h_j^{(q)}\) are nondecreasing. The shadow of an initial colex segment is again an initial colex segment. Taking shadows in two stages therefore gives \begin{equation}\label{eq:shadow-composition}
h_i^{(j)}\!\left(h_j^{(q)}(R)\right)=h_i^{(q)}(R)
\qquad(1\le i\le j\le q).
\end{equation}

\Needspace{13\baselineskip}\begin{lemma}[Intersections of a fixed size]\label{lem:cycle-intersections} Let \(m>q\ge3\) be integers and let \(\mathcal J\subseteq\binom{[m]}q\), with \[
|\mathcal J|=\binom{m-1}{q-1}+R,
\qquad R\ge1.
\] For every \(2\le j<q\), \[
|\mathcal K(\mathcal J)_j|
\ge h_j^{(q)}(R)+h_{j-1}^{(q)}(R).
\]

\end{lemma}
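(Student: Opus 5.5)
The plan is to pass to the boundary map over \(\mathbb F_2\) and to extract many \((q-1)\)-dimensional cycles from the excess \(R\). Let \(\partial\) send each \(q\)-subset of \([m]\) to the sum of its \(q\) subsets of size \(q-1\), viewed in the \(\mathbb F_2\)-vector space with basis \(\binom{[m]}{q-1}\). On the full simplex on \([m]\), the image of \(\partial\) in degree \(q\) has dimension \(\binom{m-1}{q-1}\): it equals the space of \((q-1)\)-cycles, since the simplex is acyclic. Restricting \(\partial\) to the span of \(\mathcal J\), the kernel \(Z\) therefore has dimension at least \(|\mathcal J|-\binom{m-1}{q-1}=R\). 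Let \(\mathcal W\subseteq\mathcal J\) be the union of the supports of the vectors in \(Z\). Then \(Z\) is a space of vectors supported on \(\mathcal W\) with \(\partial Z=0\).

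\emph{Parity step.} Take a nonzero \(z\in Z\) with support \(\mathcal Z\subseteq\mathcal J\). Since \(\partial z=0\), every \((q-1)\)-subset of a member of \(\mathcal Z\) lies in an even number of members of \(\mathcal Z\), hence in at least two. So every \((q-1)\)-subset \(C\) of a member \(A\in\mathcal W\) can be written as \(A\cap A'\) with \(A'\in\mathcal J\); thus \(C\) is a realised intersection. Now let \(B\subseteq A\) with \(|B|=j\le q-2\). Then \(B\) is the intersection of the \((q-1)\)-subsets of \(A\) that contain \(B\); there are \(q-j\ge2\) of these. Each is a realised intersection, so \(B\) is one too. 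Hence, for \(2\le j<q\), every \(j\)-set in the shadow of \(\mathcal W\) lies in \(\mathcal K(\mathcal J)_j\). It therefore suffices to show that the simplicial complex \(\Delta\) generated by \(\mathcal W\) has at least \(h_j^{(q)}(R)+h_{j-1}^{(q)}(R)\) faces of size \(j\).

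\emph{Björner--Kalai step.} The complex \(\Delta\) has no faces of size larger than \(q\). Consequently its top homology over \(\mathbb F_2\) is the cycle space of its \(q\)-faces, which contains \(Z\). So \(\tilde\beta_{q-1}(\Delta;\mathbb F_2)\ge R\). The Björner--Kalai inequalities compare the \(f\)-vector of a complex with prescribed Betti numbers to that of the extremal shifted complex. Applying them in dimension \(q-1\), with all other Betti numbers unrestricted, the minimal number of \(j\)-faces is attained by the complex built from the colex segment \(\mathcal I_q(R)\) together with a cone over its shadow. Concretely, the extremal \(j\)-faces split into two kinds: the \(j\)-sets in the shadow of \(\mathcal I_q(R)\), numbering \(h_j^{(q)}(R)\), and the \(j\)-sets consisting of an apex together with a \((j-1)\)-set in that shadow, numbering \(h_{j-1}^{(q)}(R)\). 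The shadow-composition identity \eqref{eq:shadow-composition} is what I expect to need in order to reduce the Björner--Kalai bound, which is stated through iterated Kruskal--Katona-type operators, to this closed form at each level \(j\).

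The main obstacle is this last step: stating the Björner--Kalai theorem in the exact form needed and matching its indices with \(h_j^{(q)}\), namely face sizes \(j\) versus dimensions \(j-1\), and the field \(\mathbb F_2\). My first approach would be a reduction. Pass to an algebraically shifted complex; this preserves the \(f\)-vector and the Betti numbers. In a shifted complex, \(\tilde\beta_{q-1}\) counts the \(q\)-faces avoiding vertex \(1\) whose union with \(\{1\}\) is not a face. At least \(R\) such faces exist, and they are not cones. By Kruskal--Katona their shadow has at least \(h_j^{(q)}(R)\) members of each size \(j\). The faces obtained by adjoining the vertex \(1\) contribute a further \(h_{j-1}^{(q)}(R)\) at level \(j\). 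The hypothesis \(q\ge3\) and the restriction \(j\ge2\) guarantee that the parity step yields realised intersections at every level considered.
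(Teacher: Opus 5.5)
Your proposal is correct. Its first two steps match the paper's: the rank count over $\mathbb F_2$ gives at least $R$ independent cycles supported on $\mathcal J$, and the parity argument realises every proper subset of a participating set. You bound the rank by acyclicity rather than by coning from a fixed vertex, which is immaterial.

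The routes differ at the count of $j$-faces. The paper quotes Björner--Kalai only as the inequality $\rho_j\ge h_{j-1}^{(j)}(z_j)$ between independent boundaries and independent cycles. It starts from $z_q\ge R$ and descends one level at a time using $z_{j-1}\ge\rho_j$ and the composition rule \eqref{eq:shadow-composition}, obtaining $z_j\ge h_j^{(q)}(R)$ and $\rho_j\ge h_{j-1}^{(q)}(R)$. You open the black box instead. After algebraic shifting, $\tilde\beta_{q-1}\ge R$ becomes $R$ explicit $q$-faces avoiding vertex $1$. One application of Kruskal--Katona bounds their $j$-shadow, and shiftedness supplies the cone faces.

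What each route buys:
\begin{itemize}
\item Your route needs no iteration and no composition identity, and it exhibits the extremal complex, the cone over the shadow of $\mathcal I_q(R)$.
\item The paper's route needs only one quoted inequality, rather than the shifting theorem together with the Betti formula for shifted complexes.
\end{itemize}
Your sentence asserting that the cone construction is extremal is proved only by the shifting argument, so present that argument as the proof.

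When you write it out, make two points explicit.
\begin{itemize}
\item \emph{The cone faces.} Let $E$ be a $(j-1)$-subset of a $q$-face $F$ with $1\notin F$. Choose $x\in F\setminus E$ and shift $x$ to $1$ in the face $E\cup\{x\}$. Thus $E\cup\{1\}$ is a face, and it differs from the shadow sets because it contains $1$.
\item \emph{The field.} The shifting must preserve Betti numbers in characteristic two. A top-dimensional $\mathbb F_2$-Betti number can exceed the rational one; a triangulated projective plane is an example. You have two options:
\begin{itemize}
\item use exterior shifting over an infinite field of characteristic two; or
\item rerun your first two steps over $\mathbb Q$ with signed boundaries, so that the classical characteristic-zero shifting applies. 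The rank is still $\binom{m-1}{q-1}$, and cancellation of the coefficient of each $(q-1)$-face replaces the parity argument.
\end{itemize}
\end{itemize}
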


\begin{proof} We first obtain \(R\) independent relations, then show
that the sets occurring in these relations supply realised intersections,
and finally count them by their shadows.

\emph{The relations.} Work over the field \(\mathbb F_2\) with two elements. Regard all
subsets of \([m]\) as basis vectors, and define a linear map by \[
\partial A=\sum_{x\in A}(A\setminus\{x\}),
\qquad(A\subseteq[m]).
\] This is the \emph{boundary} of \(A\); in particular,
\(\partial\varnothing=0\).
Each set obtained by deleting two points occurs
twice, so \(\partial^2=0\). A vector in the kernel of \(\partial\) is
called a \emph{cycle}. In a cycle, every set of one smaller size occurs
an even number of times as a subset of the participating sets.

Fix \(v\in[m]\). For a \(q\)-set \(A\) avoiding \(v\), cancellation
of the terms containing \(v\) gives
\[
 \partial A=\sum_{x\in A}
 \partial\bigl((A\setminus\{x\})\cup\{v\}\bigr).
\]
Thus the boundaries of the \(q\)-sets through \(v\) span all the
boundaries. Their rank is at most \(\binom{m-1}{q-1}\), and the space
of cycles supported on \(\mathcal J\) has dimension at least \(R\).

\emph{Realising the subsets.} Let \(A\) occur in a cycle and let \(F\subsetneq A\). For each
\(x\in A\setminus F\), the parity condition supplies another
participating member \(A_x\) containing \(A\setminus\{x\}\).
Since \(A_x\ne A\), it excludes \(x\). Consequently \[
F=A\cap\bigcap_{x\in A\setminus F}A_x.
\] Every proper subset of \(A\) is therefore a realised intersection.

\emph{Counting the subsets.} Let \(\Delta\) consist of all subsets of members of \(\mathcal J\) that occur in at least one cycle. This is a \emph{simplicial complex}: it is closed under taking subsets. Its members are called \emph{faces}. We have shown that every face of size less than \(q\) is a realised intersection.

Let \(C_j(\Delta)\) be the vector space over \(\mathbb F_2\) with the \(j\)-element faces as a basis, including the empty face when \(j=0\). For the boundary map \[
\partial_j:C_j(\Delta)\longrightarrow C_{j-1}(\Delta)
\qquad(j\ge1),
\] write \(\rho_j\) for its rank and \(z_j\) for its nullity. The number of \(j\)-element faces is therefore \(\rho_j+z_j\). Since every boundary is a cycle, \(z_{j-1}\ge\rho_j\) for \(j\ge2\). All the original cycles are supported on \(\Delta\), so \(z_q\ge R\).

The Björner--Kalai inequality compares these ranks and nullities with
colex shadows: \begin{equation}\label{eq:boundary-shadow}
\rho_j\ge h_{j-1}^{(j)}(z_j)
\qquad(j\ge2).
\end{equation} It bounds the number of independent boundaries in terms of the number of independent cycles \cite[Theorem~1.1]{BjornerKalai1988}; see also \cite[Theorem~1.1 and Remark~1.6]{ZhanHuang2025}. Those statements index faces by dimension, whereas we index them by cardinality: their cycle dimension in degree $j-1$ is $z_j$, and the boundary map out of that degree has rank $\rho_j$. We use reduced chains, so the boundary of each singleton is the empty face.

Starting with \(z_q\ge R\), inequality \eqref{eq:boundary-shadow} gives \(\rho_q\ge h_{q-1}^{(q)}(R)\). This also bounds \(z_{q-1}\), since \(z_{q-1}\ge\rho_q\). Repeating at each smaller size, and using the composition rule \eqref{eq:shadow-composition}, gives \[
z_j\ge h_j^{(q)}(R),
\qquad
\rho_j\ge h_{j-1}^{(q)}(R)
\qquad(2\le j<q).
\] Thus \(\Delta\) has at least \(h_j^{(q)}(R)+h_{j-1}^{(q)}(R)\) faces of size \(j\). Since all these faces are realised intersections, the lemma follows.
\end{proof}

For example, a single independent cycle forces at least
\(\binom q2+q=\binom{q+1}{2}\) realised pairs. More generally, the lemma
gives the part of the count that depends on \(R\). We must also account
for the empty set and the \(m\) ground-set points. If every singleton
is realised, this is immediate. The next proof handles missing
singletons by deletion. Unused ground-set points are allowed so that
the induction preserves its hypotheses.

\begin{lemma}[Small realised intersections]\label{lem:small-intersections} Let \(m>q\ge3\) and \(2\le t<q\) be integers. If \(\mathcal J\subseteq\binom{[m]}q\) and \[
|\mathcal J|=\binom{m-1}{q-1}+R,
\qquad R\ge1,
\] then \begin{equation}\label{eq:small-intersections}
|\mathcal K(\mathcal J)_{\le t}|
\ge m+1+\sum_{j=2}^{t}
\bigl(h_j^{(q)}(R)+h_{j-1}^{(q)}(R)\bigr).
\end{equation} The ground set \([m]\) need not be the support of \(\mathcal J\).

\end{lemma}

\begin{proof} For this proof, write \(\Psi(R)\) for the sum on the right of \eqref{eq:small-intersections}. It is nondecreasing in \(R\). We induct on \(m\), keeping \(q\) and \(t\) fixed.

When \(m=q+1\), the hypothesis forces \(\mathcal J\) to be the complete family of \(q\)-sets and \(R=1\). Every subset of size at most \(t\) is realised. Since \(h_j^{(q)}(1)=\binom qj\), Pascal's identity gives equality in \eqref{eq:small-intersections}.

Suppose next that every point is a realised singleton. Intersecting two of these also realises the empty set. These \(m+1\) intersections and the \(\Psi(R)\) supplied by Lemma~\ref{lem:cycle-intersections} give the required bound.

Otherwise choose a point \(x\) that is not a realised singleton. Let \(d_x\) be the number of members containing \(x\), and let \(\mathcal J'\) consist of the members avoiding \(x\), on the remaining \(m-1\) points. The excess for this smaller ground set is \begin{equation}\label{eq:intersection-excess}
R'=R+\binom{m-2}{q-2}-d_x,
\qquad
|\mathcal J'|=\binom{m-2}{q-1}+R'.
\end{equation}

Induction on the remaining points will give \(m+\Psi(R')\), whereas we need \(m+1+\Psi(R)\). We therefore need either one intersection through \(x\), or an increase in \(\Psi\). If \(x\) occurs in a member, let \(C_x\) be the intersection of all members containing it. Since \(\{x\}\) is not realised, \(|C_x|\ge2\).

Suppose first that \(|C_x|=2\). Every member through \(x\) contains this pair, so \(d_x\le\binom{m-2}{q-2}\) and \(R'\ge R\). Induction gives at least \(m+\Psi(R')\) intersections of size at most \(t\) from \(\mathcal J'\). The pair \(C_x\) is one more realised intersection: it contains \(x\), whereas all those just counted avoid \(x\). Since \(\Psi(R')\ge\Psi(R)\), this proves the bound.

It remains to consider \(|C_x|\ge3\) or an unused point \(x\). In the former case, every member through \(x\) contains a fixed three-set; in the latter, \(d_x=0\). Thus in both cases \(d_x\le\binom{m-3}{q-3}\), and \eqref{eq:intersection-excess} gives \[
R'-R\ge\binom{m-3}{q-2}.
\] We shall show that this increase in excess gives \(\Psi(R')\ge\Psi(R)+1\). First note that \[
0<R\le R'\le\binom{m-2}{q},
\] where the last inequality follows from \(|\mathcal J'|\le\binom{m-1}{q}\). Hence the first \(R'\) colex \(q\)-sets lie in \([m-2]\).

In colex order, the \(q\)-sets of \([m-2]\) occur in blocks with fixed two largest elements \(a>b\). Each block has \(\binom{b-1}{q-2}\le\binom{m-4}{q-2}\) members. Its first member introduces the previously absent pair \(\{a,b\}\). Thus \(h_1^{(q)}+h_2^{(q)}\) cannot be constant on more than \(\binom{m-4}{q-2}\) consecutive positive arguments. But \(m-2\ge q\) and \[
R'-R\ge\binom{m-3}{q-2}>\binom{m-4}{q-2}.
\] The sum \(h_1^{(q)}+h_2^{(q)}\) therefore increases between \(R\) and \(R'\). It is a summand of \(\Psi\), and all remaining terms are nondecreasing. This proves \(\Psi(R')\ge\Psi(R)+1\). Induction now gives \[
|\mathcal K(\mathcal J)_{\le t}|
\ge |\mathcal K(\mathcal J')_{\le t}|
\ge m+\Psi(R')
\ge m+1+\Psi(R).
\] This proves the lemma.
\end{proof}

We now apply the lemma to the complementary generators. At $t=2$
the two terms involving shadows agree with the variable part of the
max-lex count.

\begin{proof}[Proof of Theorem~\ref{thm:second-minimum}] Suppose first that \(n>k\). Let \(V\) be the support of \(\mathcal F\), let \(\mathcal A\) consist of its \(k\)-sets, and put \[
m=n+2,\qquad q=m-k,\qquad
R=N-\binom{n+1}{k}.
\] The complementary family \(\mathcal J=\{V\setminus A:A\in\mathcal A\}\) consists of \(q\)-sets and satisfies \[
|\mathcal J|=\binom{m-1}{q-1}+R.
\] Since \(q\ge3\), Lemma~\ref{lem:small-intersections} applies with \(t=2\). Complementation is a bijection between \(\langle\mathcal A\rangle_{\ge n}\) and \(\mathcal K(\mathcal J)_{\le2}\), and therefore \[
|\mathcal F_{\ge n}|
\ge |\langle\mathcal A\rangle_{\ge n}|
\ge n+3+h_1^{(q)}(R)+h_2^{(q)}(R).
\] To identify the last expression, recall that the \(R\) generators through the largest point in the max-lex model are obtained from the first \(R\) lexicographic \((k-1)\)-sets of \([n+1]\). Complementation in \([n+1]\) and reversal of its labels send this segment to \(\mathcal I_q(R)\). Thus the last expression is precisely \(U_k(N,n)\), as counted in subsection~\ref{the-max-lex-count}.

If \(n=k\), the same range of \(N\) is the first strip for cutoff \(k+1\). Apply Theorem~\ref{thm:first-strip} at that cutoff and add the \(N\) members of size \(k\). This gives \[
|\mathcal F_{\ge k}|
=N+|\mathcal F_{\ge k+1}|
\ge N+U_k(N,k+1)
=U_k(N,k).
\] The max-lex family attains the count in both cases. This proves the theorem.

\end{proof}

\begin{remark} The same comparison shows what remains to be counted in higher strips. Let \(n>k\), let \(N\) lie in the \(t\)-th strip, and put \(m=n+t\), \(q=m-k\), and \(R=N-\binom{m-1}{k}\). For a family \(\mathcal A\) of \(N\) \(k\)-sets on \(m\) points, Lemma~\ref{lem:small-intersections} and the model count give \[
|\langle\mathcal A\rangle_{\ge n}|
\ge U_k(N,n)
-\sum_{j=2}^{t-1}
\left[\binom{m-1}{j}-h_j^{(q)}(R)\right]
\qquad(t\ge2).
\] Indeed, the model count is \(S_{t-1}(m-1)+\sum_{j=0}^{t}h_j^{(q)}(R)\). For \(t=2\), the sum subtracted in the displayed bound is empty, giving Theorem~\ref{thm:second-minimum}. For \(t\ge3\), one must also account for these nonnegative deficits at sizes \(2,\ldots,t-1\).

\end{remark}

\hypertarget{higher-strips-on-minimum-support}{%
\section{Higher strips on minimum support}\label{higher-strips-on-minimum-support}\label{sec:higher-minimum}}

On minimum support, a set which is not generated forces many generators
to be missing. We shall count these missing generators and prove
Theorem~\ref{cor:higher-linear}. The same point of view gives exact
values when the number of missing generators is a single binomial
coefficient. In the third strip, keeping track of the generators
through a deleted point will give the complete bound for triples.

Throughout this section, unless another range is stated, let \(n>k\ge2\), \(t\ge1\), and let \(\mathcal A\) be a family of \(N\) \(k\)-sets with support \(V\), where \[
 |V|=m=n+t,\qquad
 \binom{m-1}{k}<N\le\binom mk.
\] Write \[
 h=\binom mk-N,\qquad R=N-\binom{m-1}{k},\qquad q=m-k.
\] Thus \(h+R=\binom{m-1}{k-1}\). We use the shadow functions of Definition~\ref{def:small-intersections}. The model count from subsection~\ref{the-max-lex-count} takes the form \begin{equation}\label{eq:higher-model}
 U_k(N,n)=S_{t-1}(m-1)+\sum_{j=0}^{t}h_j^{(q)}(R).
\end{equation} Indeed, the complete \(k\)-layer on \(m-1\) points supplies the first sum. Complementing the remaining generators turns the count of unions through the last point into the second sum.

\subsection{Blocking points}

A point \(z\in U\subseteq V\) \emph{blocks} \(U\) if no member of
\(\mathcal A\) contained in \(U\) contains \(z\). Thus a nonempty
set fails to be generated precisely when it has a blocking point:
in the absence of such a point, the generators contained in the set
cover it. If \(z\) blocks a \(u\)-set \(U\), all
\(\binom{u-1}{k-1}\) \(k\)-subsets of \(U\) through \(z\) are missing.

The collections of missing generators belonging to different blocking
points may overlap. When two fixed points suffice, the following
shadow argument controls this overlap. For a family \(\mathcal D\),
write \(\partial_j\mathcal D\) for its \(j\)-shadow. We use the
Kruskal--Katona theorem in the form
\(|\partial_j\mathcal D|\ge h_j^{(s)}(|\mathcal D|)\) for a family
of \(s\)-sets \cite{Katona1968,Kruskal1963}.

\begin{lemma}[Two designated points]\label{lem:two-designated}
Let $U_1,\ldots,U_L$ be distinct $u$-sets, and choose $z_i\in U_i$.
Suppose that at most two different points occur among the $z_i$.
For $u\ge k\ge2$, their \emph{pointed $k$-shadow} is
\[
 \mathcal P_k=\{A:|A|=k,\ z_i\in A\subseteq U_i
                  \text{ for some }i\}.
\]
Then $|\mathcal P_k|\ge h_{k-1}^{(u-1)}(L)$.
\end{lemma}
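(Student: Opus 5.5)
The plan is to reduce to a single designated point and then apply the Kruskal--Katona theorem to the links. Write \(a,b\) for the (at most two) designated points. Split the indices into \(I_a=\{i:z_i=a\}\) and \(I_b=\{i:z_i=b\}\), of sizes \(L_a+L_b=L\).

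For \(i\in I_a\), the pointed \(k\)-sets are exactly \(\{a\}\cup S\) with \(S\) in the \((k-1)\)-shadow of the distinct \((u-1)\)-sets \(W_i=U_i\setminus\{a\}\), and similarly for \(I_b\). If only one point occurs, Kruskal--Katona gives \(|\mathcal P_k|\ge h_{k-1}^{(u-1)}(L)\) at once. This is the model case, and the aim is to reduce the two-point case to it.

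To merge the two groups, let \(\tau\) be the transposition of \(a\) and \(b\). Send each \(U_j\) with \(j\in I_b\) to \(\tau(U_j)\), which contains \(a\), and designate \(a\) for it. The sets \(\{a\}\cup S\) in the image are the \(\tau\)-images of the pointed sets \(\{b\}\cup S'\) of the \(b\)-group. Two pointed sets \(\{a\}\cup S\) from \(I_a\) and \(\{b\}\cup S'\) from \(I_b\) coincide only if both contain \(a\) and \(b\). In that case \(S\setminus\{b\}=S'\setminus\{a\}=T\) and the common set is \(T\cup\{a,b\}\). Under \(\tau\), however, \(\{b\}\cup S'\) goes to \(\{a\}\cup\tau(S')\) with \(\tau(S')=S\), so such coincidences are also identified after merging. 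Hence
\[
|\mathcal P_k|\;\ge\;\Bigl|\partial_{k-1}\{U_i\setminus\{a\}:i\in I_a\}\;\cup\;\partial_{k-1}\{\tau(U_j)\setminus\{a\}:j\in I_b\}\Bigr|.
\]
The right-hand side is the \((k-1)\)-shadow of a family of \((u-1)\)-sets. If this family has \(L\) distinct members, Kruskal--Katona gives \(h_{k-1}^{(u-1)}(L)\) and we are done.

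The main obstacle is collisions \(U_i\setminus\{a\}=\tau(U_j)\setminus\{a\}\). Checking cases, a collision forces \(U_i=W\cup\{a\}\) and \(U_j=W\cup\{b\}\) with \(a,b\notin W\). (If \(b\in W\), then \(U_i=U_j\), contradicting distinctness.) For such a pair the merged bound undercounts. Yet \(\mathcal P_k\) contains both \(\{a\}\cup S\) and \(\{b\}\cup S\) for every \((k-1)\)-set \(S\subseteq W\), and these are never identified with each other, since one avoids \(b\) and the other avoids \(a\).

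My first attempt to repair this is an \((a\to b)\)-compression of the family \(\{U_i\}\). Replace \(\tau(U_j)\) for colliding \(j\) by a different \((u-1)\)-set on \(V\setminus\{a\}\) containing \(b\), say \((W\setminus\{w\})\cup\{b\}\), with \(w\in W\) chosen so that the new set is still absent. Then check that its extra shadow \(\{b\}\cup(\text{\((k-2)\)-subsets of }W\setminus\{w\})\) is genuinely counted by the sets \(\{b\}\cup S\), \(S\subseteq W\), of \(\mathcal P_k\) lying outside the merged image. If a suitable \(w\) cannot always be found, the fallback is a direct count. Bound \(|\mathcal P_k|\) below by \(|\partial_{k-1}\mathcal W|+c\,\binom{u-1}{k-1}\), where \(\mathcal W\) is the merged family with \(L-c\) members and \(c\) is the number of collisions. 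Then use \(h_{k-1}^{(u-1)}(L)-h_{k-1}^{(u-1)}(L-c)\le c\binom{u-1}{k-1}\), which holds because a single \((u-1)\)-set contributes at most \(\binom{u-1}{k-1}\) new shadow members. This last inequality, together with verifying that the collided copies contribute disjointly, is the step I would expect to need the most care.
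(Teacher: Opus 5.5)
Your reduction is sound and coincides with the paper's. The merged family $\mathcal W=\{U_i\setminus\{a\}:z_i=a\}\cup\{\tau(U_j)\setminus\{a\}:z_j=b\}$ is exactly the paper's $\mathcal W_a\cup\mathcal W_b^*$, and the inequality $|\mathcal P_k|\ge|\partial_{k-1}\mathcal W|$ is correct. The collisions are precisely the pairs $W\cup\{a\}$, $W\cup\{b\}$ with $a,b\notin W$. So $|\mathcal W|=L-c$, where $\mathcal D$ denotes the family of the $c$ collided $(u-1)$-sets $W$.

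The gap is in paying for these $c$ lost members. The compression repair is not specified, and it can fail outright. If $W\cup\{b\}$ is designated $b$ and every $u$-subset of $W\cup\{a,b\}$ containing $a$ is designated $a$, then all $u-1$ candidates $(W\setminus\{w\})\cup\{b\}$ already lie in $\mathcal W$. The fallback inequality $|\mathcal P_k|\ge|\partial_{k-1}\mathcal W|+c\binom{u-1}{k-1}$ is false, because the shadows of different collided sets overlap. For $k=2$, $u=3$, take the six sets $T\cup\{a\}$ and $T\cup\{b\}$ for $T\in\binom{\{1,2,3\}}{2}$, designated by $a$ and $b$ respectively. Then $c=3$ and $|\partial_1\mathcal W|=3$, but $|\mathcal P_2|=6<9$.

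The missing idea is to count the collisions through the shadow of $\mathcal D$, not member by member. Your inclusion--exclusion comparison is in fact exact. The difference $|\mathcal P_k|-|\partial_{k-1}\mathcal W|$ equals the number of $(k-1)$-sets $S$ avoiding $a$ and $b$ for which both $\{a\}\cup S$ and $\{b\}\cup S$ lie in $\mathcal P_k$, since the merge identifies exactly such pairs. Every $S\in\partial_{k-1}\mathcal D$ qualifies. Applying Kruskal--Katona separately to $\mathcal W$ and to $\mathcal D$ therefore gives
\[
 |\mathcal P_k|\ge|\partial_{k-1}\mathcal W|+|\partial_{k-1}\mathcal D|
 \ge h_{k-1}^{(u-1)}(L-c)+h_{k-1}^{(u-1)}(c).
\]
Your increment bound $h_{k-1}^{(u-1)}(L)-h_{k-1}^{(u-1)}(L-c)\le c\binom{u-1}{k-1}$ is true but too weak to close this. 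What you need is subadditivity, $h_{k-1}^{(u-1)}(L-c)+h_{k-1}^{(u-1)}(c)\ge h_{k-1}^{(u-1)}(L)$. To get it, place colex-initial families of sizes $L-c$ and $c$ on disjoint ground sets and apply Kruskal--Katona to their union. This is how the paper's proof concludes, with an explicit injection in place of your inclusion--exclusion.
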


\begin{proof}
The assertion is immediate if $L=0$.
If all the designated points coincide, delete that point and apply
Kruskal--Katona. Otherwise call the two points $a,b$, and put
\[
 \mathcal W_a=\{U_i\setminus\{a\}:z_i=a\},\qquad
 \mathcal W_b=\{U_i\setminus\{b\}:z_i=b\}.
\]
Their total size is $L$. The common members
$\mathcal D=\mathcal W_a\cap\mathcal W_b$ avoid both $a$ and $b$;
write $d=|\mathcal D|$.

We shall move the remaining members of $\mathcal W_b$ to the ground set
avoiding $a$. For $W\in\mathcal W_b\setminus\mathcal D$, replace $W$
by $W$ if $a\notin W$, and by $(W\setminus\{a\})\cup\{b\}$ otherwise.
Call the resulting family $\mathcal W_b^*$.
These replacements are distinct. None belongs to $\mathcal W_a$:
a coincidence in the first case would lie in $\mathcal D$, and a
coincidence in the second would give the same $U_i$ two designations.
Thus $\mathcal W_a\cup\mathcal W_b^*$ has $L-d$ members.

Here is an injection from its $(k-1)$-shadow into $\mathcal P_k$.
For $E$ in that shadow, send $E$ to $E\cup\{a\}$ if
$E\in\partial_{k-1}\mathcal W_a$ or $b\in E$; otherwise send it to
$E\cup\{b\}$. Each image belongs to $\mathcal P_k$ by the construction
of $\mathcal W_b^*$. Images of the first kind contain $a$, while those
of the second kind do not. Within each kind the map is injective.

There are also $|\partial_{k-1}\mathcal D|$ further members of
$\mathcal P_k$, namely $E\cup\{b\}$ with
$E\in\partial_{k-1}\mathcal D$. They contain no $a$, and are excluded
from the second kind of image because
$\partial_{k-1}\mathcal D\subseteq\partial_{k-1}\mathcal W_a$.
Consequently
\[
 |\mathcal P_k|
 \ge h_{k-1}^{(u-1)}(L-d)+h_{k-1}^{(u-1)}(d)
 \ge h_{k-1}^{(u-1)}(L).
\]
For the last inequality, place extremal families of the two indicated
sizes on disjoint ground sets and apply Kruskal--Katona to their union.
This proves the lemma.
\end{proof}

\begin{proposition}\label{prop:two-blockers}
If every subset of $V$ of size at least $n$ which is not generated has
a blocking point in a fixed set of at most two points, then
\[
 |\langle\mathcal A\rangle_{\ge n}|\ge U_k(N,n).
\]
\end{proposition}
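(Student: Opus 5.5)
We compare the numbers of missing large subsets rather than the generated ones. On the support \(V\) with \(|V|=m=n+t\), write \(T=\sum_{u\ge n}\binom{m}{u}=S_t(m)\) for the number of subsets of size at least \(n\). Then
\[
|\langle\mathcal A\rangle_{\ge n}|=T-M,
\]
where \(M\) is the number of non-generated subsets of size at least \(n\). In the max-lex model, \eqref{eq:higher-model} gives
\[
U_k(N,n)=T-M_0,
\]
where \(M_0\) counts the missing large sets of the model. These sets all contain the last point \(m\): their complements in \([m-1]\) are the \(j\)-sets, \(j\le t\), outside the shadow of \(\mathcal I_q(R)\), for \(1\le j\le t\). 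It therefore suffices to show \(M\le M_0\), level by level. For each \(u\) with \(n\le u\le m\), let \(M_u\) be the number of missing \(u\)-sets. We aim to prove
\[
M_u\le\binom{m-1}{u-1}-h_{m-u}^{(q)}(R),
\]
which is exactly the model's number of missing \(u\)-sets through \(m\).

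Fix \(u\ge n\) and let \(U_1,\ldots,U_L\) be the missing \(u\)-subsets of \(V\), with \(L=M_u\). By hypothesis each \(U_i\) has a blocking point \(z_i\) in a fixed set of at most two points. The pointed \(k\)-shadow \(\mathcal P_k\) of Lemma~\ref{lem:two-designated} consists of \(k\)-sets that are missing generators, since a blocking point lies in no generator inside \(U_i\). Hence
\[
h_{k-1}^{(u-1)}(L)\le|\mathcal P_k|\le h=\binom{m-1}{k-1}-R.
\]
The remaining step is to invert this inequality: we want the largest \(L\) with \(h_{k-1}^{(u-1)}(L)\le h\) to be at most \(\binom{m-1}{u-1}-h_{m-u}^{(q)}(R)\). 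In the model, the missing generators are the last \(h\) \((k-1)\)-sets of \([m-1]\) in lex order, each joined with \(m\). The missing \(u\)-sets through \(m\) correspond to the \((u-1)\)-sets of \([m-1]\) all of whose \((k-1)\)-subsets are missing, that is, to the upper shadow of that terminal segment. Kruskal--Katona, applied after complementation in \([m-1]\), says this extremal configuration maximizes the number of \((u-1)\)-sets whose full \((k-1)\)-shadow lies in a prescribed family of size \(h\). Equivalently, the number \(L\) with \(h_{k-1}^{(u-1)}(L)\le h\) is at most the model count. Complementing in \([m-1]\), the \((u-1)\)-sets whose shadow avoids the first \(R\) complementary sets are counted by \(\binom{m-1}{u-1}-h_{m-u}^{(q)}(R)\), as in the proof of Lemma~\ref{lem:model-count}.

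The main obstacle is this inversion, namely verifying that the Kruskal--Katona threshold matches exactly the model's missing count at every level \(u\). I would handle it by comparing both quantities through the colex/lex duality. The model attains \(h_{k-1}^{(u-1)}(L_0)=h\) with \(L_0\) as the number of its missing \(u\)-sets, and its missing family is an initial segment. Monotonicity of \(h_{k-1}^{(u-1)}\) then gives \(L\le L_0\), except where the function is constant. In that case I would use the exact-shadow form of Kruskal--Katona, that \(L_0\) is the largest \(L\) with shadow at most \(h\), because the model's missing \(u\)-sets are precisely all \(u\)-sets with full \((k-1)\)-shadow among the missing generators. Summing over \(u\) from \(n\) to \(m\) then gives \(M\le M_0\), and hence the proposition.
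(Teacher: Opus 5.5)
Your proposal is correct and follows the paper's proof. You bound the non-generated $u$-sets level by level through Lemma~\ref{lem:two-designated}, convert $h_{k-1}^{(u-1)}(L)\le h$ into $L\le\binom{m-1}{m-u}-h_{m-u}^{(q)}(R)$ by Kruskal--Katona after complementation in $[m-1]$, and sum against \eqref{eq:higher-model}.

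One correction to your last paragraph: the model need not satisfy $h_{k-1}^{(u-1)}(L_0)=h$. For example, $h=1$ gives $L_0=0$. So drop the monotonicity detour and argue directly, as the paper does. The first $L$ colex $(u-1)$-sets have as shadow an initial colex segment of size at most $h$. That segment therefore lies in the first $h$ colex $(k-1)$-sets, whose complementary family is $\mathcal I_q(R)$.
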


\begin{proof}
Let $b_u$ be the number of $u$-subsets of $V$ which are not generated.
Choose the blocking points as in the hypothesis. Their pointed
$k$-shadows lie among the $h$ missing generators, so
Lemma~\ref{lem:two-designated} gives
$h_{k-1}^{(u-1)}(b_u)\le h$.
The complementary form of Kruskal--Katona rewrites this as
\[
 b_{m-j}\le\binom{m-1}{j}-h_j^{(q)}(R)
 \qquad(1\le j\le t).
\]
To see this form, let $\mathcal H$ be the first $h$ $(k-1)$-sets in
colex order on $m-1$ points. The shadow of the first $b_{m-j}$
$(m-j-1)$-sets is an initial segment of size at most $h$, hence lies
in $\mathcal H$. Thus these $b_{m-j}$ sets each have all their
$(k-1)$-subsets in $\mathcal H$. Complementation reverses colex
order: the complements of the $R$ sets outside $\mathcal H$ form
$\mathcal I_q(R)$. A set has all its $(k-1)$-subsets in $\mathcal H$
exactly when its complementary $j$-set lies outside the $j$-shadow
of $\mathcal I_q(R)$. This gives the displayed bound, also when $h=0$.

The support itself is generated. Subtracting these bounds from the
numbers of subsets of each size gives
\[
 |\langle\mathcal A\rangle_{\ge n}|
 \ge S_t(m)-S_t(m-1)+\sum_{j=0}^{t}h_j^{(q)}(R)
 =U_k(N,n),
\]
by Pascal's identity and \eqref{eq:higher-model}. This proves the
proposition.
\end{proof}

\subsection{The higher-strip bound}

If every subset above size \(n\) is generated, the intersection lemma
already supplies enough \(n\)-sets. Otherwise a missing larger set
forces a larger collection of missing generators. We shall use that
collection to show that two blocking points suffice.

\begin{lemma}\label{lem:bottom-level}
Suppose $t\ge2$. If every subset of $V$ of size at least $n+1$ is
generated, then $|\langle\mathcal A\rangle_{\ge n}|\ge U_k(N,n)$.
\end{lemma}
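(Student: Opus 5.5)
The plan is to count by level, using complements in \(V\). By hypothesis every subset of \(V\) of size at least \(n+1\) is generated. These are the subsets whose complements have size at most \(t-1\), so they contribute exactly \(S_{t-1}(m)\) large unions. It then remains to show that the number of generated \(n\)-subsets of \(V\) is at least
\[
 U_k(N,n)-S_{t-1}(m).
\]

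First I simplify the target using \eqref{eq:higher-model} and Pascal's identity, \(S_{t-1}(m)-S_{t-1}(m-1)=S_{t-2}(m-1)\). The required number of generated \(n\)-sets becomes
\[
 h_t^{(q)}(R)+h_{t-1}^{(q)}(R)
 -\sum_{j=0}^{t-2}\Bigl[\binom{m-1}{j}-h_j^{(q)}(R)\Bigr].
\]
Each bracket is nonnegative, since a \(j\)-shadow of a family of subsets of an \((m-1)\)-set has at most \(\binom{m-1}{j}\) members; the first \(R\) colex \(q\)-sets lie in \([m-1]\) because \(R\le\binom{m-1}{q}\). So it suffices to find at least \(h_t^{(q)}(R)+h_{t-1}^{(q)}(R)\) generated \(n\)-sets.

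Next I pass to the complementary family \(\mathcal J=\{V\setminus A:A\in\mathcal A\}\) of \(q\)-subsets of the \(m\)-set \(V\). It satisfies
\[
 |\mathcal J|=N=\binom{m-1}{k}+R=\binom{m-1}{q-1}+R,\qquad R\ge1.
\]
An intersection of a nonempty subfamily of \(\mathcal J\) is the complement of the corresponding union of generators. Hence the realised intersections of size exactly \(t\) correspond bijectively to the generated \(n\)-subsets of \(V\).

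Then I apply Lemma~\ref{lem:cycle-intersections} with \(j=t\). Its hypotheses need \(m>q\ge3\) and \(2\le t<q\). Here \(m>q\) because \(k\ge2\), and \(q=n+t-k>t\ge2\) because \(n>k\), so \(q\ge3\). The lemma gives
\[
 |\mathcal K(\mathcal J)_t|\ge h_t^{(q)}(R)+h_{t-1}^{(q)}(R),
\]
which is the required count of generated \(n\)-sets. Adding the \(S_{t-1}(m)\) larger members gives \(|\langle\mathcal A\rangle_{\ge n}|\ge U_k(N,n)\).

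I expect no real obstacle; the argument is bookkeeping on top of the earlier lemmas. The points to check carefully are the index conventions in \eqref{eq:higher-model} (which binomial sums cover which complement sizes) and the range conditions \(q\ge3\) and \(t<q\) for the cycle lemma, both of which hold under the standing assumptions \(n>k\ge2\) and \(t\ge2\).
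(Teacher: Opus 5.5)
Your proof is correct and follows the paper's argument. You complement in $V$, apply Lemma~\ref{lem:cycle-intersections} at size $t$ to obtain $h_t^{(q)}(R)+h_{t-1}^{(q)}(R)$ generated $n$-sets, add the $S_{t-1}(m)$ larger subsets, and discard the nonnegative difference $S_{t-2}(m-1)-\sum_{j\le t-2}h_j^{(q)}(R)$, which is nonnegative because $\mathcal I_q(R)$ lies on $m-1$ points.
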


\begin{proof}
Complement the generators in $V$. Since $q>t$,
Lemma~\ref{lem:cycle-intersections} supplies at least
$h_t^{(q)}(R)+h_{t-1}^{(q)}(R)$ realised $t$-sets, hence that many
generated $n$-sets. Adding the larger subsets gives
\[
 \begin{aligned}
 |\langle\mathcal A\rangle_{\ge n}|
 &\ge S_{t-1}(m)+h_t^{(q)}(R)+h_{t-1}^{(q)}(R)\\
 &=U_k(N,n)+S_{t-2}(m-1)-\sum_{j=0}^{t-2}h_j^{(q)}(R).
 \end{aligned}
\]
The last difference is nonnegative, since $\mathcal I_q(R)$ lies on
$m-1$ points. This proves the lemma.
\end{proof}

\begin{theorem}[The missing-generator criterion]\label{thm:h-criterion}
Suppose $n>k\ge3$ and $t\ge3$. If
\begin{equation}\label{eq:h-criterion}
 h<\binom n{k-1}+\binom{n-2}{k-1}+\binom{n-3}{k-1},
\end{equation}
then $|\langle\mathcal A\rangle_{\ge n}|\ge U_k(N,n)$.
\end{theorem}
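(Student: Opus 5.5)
We plan to reduce to Proposition~\ref{prop:two-blockers} and Lemma~\ref{lem:bottom-level}. If every subset of \(V\) of size at least \(n+1\) is generated, Lemma~\ref{lem:bottom-level} finishes the proof, since \(t\ge3\ge2\). So we may assume some set \(U_0\subseteq V\) with \(|U_0|\ge n+1\) is not generated, with blocking point \(a\). By monotonicity we may shrink \(U_0\) to an \((n+1)\)-set still containing \(a\): a blocking point of \(U_0\) also blocks every subset of \(U_0\) through it. The \(\binom{n}{k-1}\) \(k\)-subsets of \(U_0\) through \(a\) are then all missing. This uses the first term of \eqref{eq:h-criterion}. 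The goal is to show that every non-generated set of size at least \(n\) has a blocking point in \(\{a,b\}\) for a suitable second point \(b\); Proposition~\ref{prop:two-blockers} then gives the bound.

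Next we treat a non-generated set \(U\) with \(|U|\ge n\) that is not blocked by \(a\), with blocking point \(z\ne a\). All \(k\)-subsets of \(U\) through \(z\) are missing. Since \(|U|\ge n\), these number at least \(\binom{n-1}{k-1}\). Their overlap with the missing sets through \(a\) inside \(U_0\) consists of \(k\)-sets containing both \(a\) and \(z\) inside \(U\cap U_0\). On \(m=n+t\) points, \(|U\cap U_0|\ge n+1-t\), but this is too weak in general. Instead we count directly. If \(a\notin U\), the missing sets through \(z\) in \(U\) avoid \(a\) and are disjoint from those through \(a\). This forces \(h\ge\binom n{k-1}+\binom{n-1}{k-1}\), which already exceeds the allowed bound when combined with one more blocker. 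If \(a\in U\) but \(a\) does not block \(U\), the overlap is at most \(\binom{n-1}{k-2}\). So we obtain at least \(\binom{n-1}{k-1}-\binom{n-1}{k-2}\)-type new missing sets. The precise bookkeeping should show that after fixing the first such \(z=b\) we have \(h\ge\binom n{k-1}+\binom{n-2}{k-1}\), since at least the \(k\)-sets through \(b\) in \(U\) avoiding \(a\) and one further point are new.

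Finally we suppose there is a third non-generated set \(U'\) of size at least \(n\) blocked by neither \(a\) nor \(b\), with blocker \(c\). Its \(k\)-subsets through \(c\) avoiding \(a\), \(b\) and possibly one more conflicting point are new missing generators, at least \(\binom{n-3}{k-1}\) of them. This contradicts \eqref{eq:h-criterion}. The case distinction on whether \(a,b\) lie in \(U'\) and whether \(c\in U_0\) or \(c\in U\) needs care. We would handle it by always discarding from \(U'\setminus\{c\}\) the points \(a\), \(b\), and, if necessary, one point witnessing that \(U'\) is not blocked by \(a\) or by \(b\), which leaves at least \(n-3\) free points. The hypothesis \(k\ge3\) ensures the \(\binom{\cdot}{k-1}\) counts dominate the overlaps.

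The main obstacle is this overlap accounting. One must show that the three families of forced missing generators are essentially disjoint after removing at most three points, and in particular verify the second-step bound \(\binom{n-2}{k-1}\) rather than something weaker. We would first try to choose \(U_0\), \(U\) and \(U'\) minimal or maximal in suitable orders, for example taking \(U_0\) of maximum size, to force the blocking sets to interact in controlled ways.
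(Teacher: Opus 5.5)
Your outline matches the paper's: use Lemma~\ref{lem:bottom-level} to obtain a blocked $(n+1)$-set $U_0$ with blocker $a$, pick a second blocker $b$, show that a third blocker $c$ contradicts the bound on $h$, and conclude with Proposition~\ref{prop:two-blockers}. The overlap accounting you call the main obstacle is already settled by the collections you name, once you commit to them:
\[
 \mathcal C_1=\{A:\ a\in A\subseteq U_0\},\qquad
 \mathcal C_2=\{A:\ b\in A\subseteq U,\ a\notin A\},\qquad
 \mathcal C_3=\{A:\ c\in A\subseteq U',\ a,b\notin A\}.
\]
Each consists of missing $k$-sets, since a blocker of a set blocks every subset through it. They are pairwise disjoint for a simple reason. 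Members of $\mathcal C_1$ contain $a$, and the others do not. Members of $\mathcal C_2$ contain $b$, and those of $\mathcal C_3$ do not. The blocking relations force $b\ne a$ and $c\notin\{a,b\}$. Then $|U|,|U'|\ge n$ gives sizes at least $\binom n{k-1}$, $\binom{n-2}{k-1}$ and $\binom{n-3}{k-1}$. So $h$ is at least the bound in \eqref{eq:h-criterion}, a contradiction.

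This needs no case split on $a\in U$ or $c\in U_0$, no overlap estimate such as your $\binom{n-1}{k-2}$, no shrinking of $U$ or $U'$, and no appeal to $k\ge3$. Do \emph{not} discard a further ``conflicting point'' from $U'$. Only $|U'\setminus\{c\}|\ge n-1$ is guaranteed, so removing $a$, $b$ and one more point leaves $n-4$ points, not $n-3$ as you state. The resulting $\binom{n-4}{k-1}$ no longer reaches the required total. As written, that hedge is the one step in your plan that would actually fail.

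The paper counts differently. It takes the three full stars: through $z$ in $U$, through $y$ in an $n$-subset $W_0\subseteq W$, and through $x$ in an $n$-subset $T_0\subseteq T$. It then applies inclusion--exclusion, bounding the pairwise overlaps by $\binom{n-2}{k-2}$, $\binom{n-2}{k-2}$ and $\binom{n-3}{k-2}$. The last bound requires choosing $W_0\ne T_0$, and Pascal's identity then yields the same total. Your nested ``new part'' decomposition reaches that total directly, without selecting distinct $n$-subsets. Stated cleanly, it is a slightly simpler route to the same bound.
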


\begin{proof}
We shall find at most two points which between them block every set
of size at least $n$ that is not generated. If a third point is needed,
the three blocking collections will contradict the bound on $h$.

By Lemma~\ref{lem:bottom-level}, we may suppose that a set of size at
least $n+1$ is not generated. Choose a blocking point $z$ and retain
an $(n+1)$-subset $U$ containing it. All the $\binom n{k-1}$
$k$-subsets of $U$ through $z$ are missing.

If $z$ blocks every set under consideration, apply
Proposition~\ref{prop:two-blockers}. Otherwise choose a set $W$ of
size at least $n$ which is not generated and is not blocked by $z$,
and choose a blocking point $y$ of $W$. If $y,z$ suffice for all such
sets, the same proposition applies. Suppose they do not, and choose
a set $T$ blocked by neither, with blocking point $x$.
The three points $x,y,z$ are distinct.

Choose distinct $n$-subsets $W_0\subseteq W$, $T_0\subseteq T$
containing $y,x$, respectively. This is possible: if both original
sets have size $n$, they differ because $y$ blocks $W$ but not $T$;
a larger set offers at least two choices.
The three collections of missing $k$-sets through the designated
points have sizes $\binom n{k-1}$ and twice $\binom{n-1}{k-1}$.
An intersection involving the collection on $U$ has at most
$\binom{n-2}{k-2}$ members: a common generator must contain the
two distinct designated points. The other intersection has at most
$\binom{n-3}{k-2}$ members, since $W_0\ne T_0$.
Subtracting these three possible overlaps and applying Pascal's
identity gives
\[
 h\ge\binom n{k-1}+\binom{n-2}{k-1}+\binom{n-3}{k-1},
\]
contrary to \eqref{eq:h-criterion}. Thus two points suffice, and
Proposition~\ref{prop:two-blockers} proves the theorem.
\end{proof}

\begin{proof}[Proof of Theorem~\ref{cor:higher-linear}]
Put $r=k-1$. Since $h\le\binom{n+t-1}{r}-1$, it suffices to show
\begin{equation}\label{eq:whole-strip-comparison}
 \binom{n+t-1}{r}
 \le\binom nr+\binom{n-2}r+\binom{n-3}r.
\end{equation}
Divide by $\binom nr$. The product formula for binomial coefficients
and $n\ge(t+1)r$ give
\[
 \frac{\binom{n+t-1}r}{\binom nr}\le(1+1/t)^{t-1},\qquad
 \frac{\binom nr+\binom{n-2}r+\binom{n-3}r}{\binom nr}
 \ge1+(1-1/t)^2+(1-1/t)^3.
\]
For $t=4$ these bounds are $125/64$ and $127/64$. For $t\ge5$,
put $s=1/t$. The elementary logarithmic and exponential estimates give
\[
 (1+s)^{1/s-1}\le e^{1-4s/3}
 \le\frac{11}{4}-\frac{10}{3}s+\frac43s^2.
\]
For the first estimate use $\log(1+s)\le s-s^2/2+s^3/3$;
for the second use $e^{-x}\le1-x+x^2/2$ and $e<11/4$.
Comparison with $1+(1-s)^2+(1-s)^3$ leaves a cubic polynomial
which decreases on $[0,1/5]$ and has value $23/1500$ at $1/5$.

For $t=3$, retain the separate factors in the product estimates.
Since $r/n\le1/4$, $r/(n-1)\le2/7$, and $r/(n-2)\le1/3$,
the right side of \eqref{eq:whole-strip-comparison}, divided by
$\binom nr$, is at least $53/28$, whereas the left side is at most
$16/9$. Thus \eqref{eq:whole-strip-comparison} holds in every case,
and the theorem follows.
\end{proof}

\begin{remark}
For smaller $n$, Theorem~\ref{thm:h-criterion} still covers a
terminal interval of each strip, where fewer generators are missing.
For triples in the third strip, \eqref{eq:whole-strip-comparison}
holds for $n\ge7$. It suffices to check $n=7$, since each ratio
$\binom{n-j}{k-1}/\binom{n+2}{k-1}$, for $j=0,2,3$, is
nondecreasing in $n$.
\end{remark}

\subsection{Binomial values of the number of missing sets}

For the next bound the blocking points may vary freely. A set with a
chosen point has all but one of its immediate subsets containing that
point. A partial-shadow theorem therefore applies to the collections
of missing sets. It gives a continuous estimate, which is exact at
integer binomial values. For an integer \(s\ge1\) and a real
\(y\ge s-1\), put
\(\binom ys=y(y-1)\cdots(y-s+1)/s!\).
We also put \(\binom y0=1\) for \(y\ge0\).

We use the partial-shadow theorem proved by Chao and Yu
\cite[Theorem~1.7]{ChaoYu2023Joints}; see also \cite{ChaoYu2024}.
In the form needed here, if a family of \((\ell-1)\)-sets contains at
least \(\ell-1\) faces of each of \(\binom{x}{\ell-1}\) distinct
\(\ell\)-sets, where \(x\ge\ell-1\), then it has at least
\(\binom{x}{\ell-2}\) members.

\Needspace{12\baselineskip}
\begin{proposition}[A continuous bound for missing unions]\label{prop:continuous-missing}
Let $\mathcal B$ consist of all but $e$ of the $k$-sets of a finite
ground set, where $k\ge2$ and $e>0$. Define $y\ge k-1$ by
$e=\binom y{k-1}$. For every $u\ge k$, the number of $u$-subsets
which are not generated by $\mathcal B$ is at most
\begin{equation}\label{eq:continuous-missing}
 \begin{cases}
 \left\lfloor\binom y{u-1}\right\rfloor,&y\ge u-1,\\
 0,&y<u-1.
 \end{cases}
\end{equation}
For $e=0$ there are no such subsets.
\end{proposition}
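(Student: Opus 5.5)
The plan is to induct on \(u\) downward from \(u=k\), applying the partial-shadow theorem at each step. Write \(\mathcal M\) for the \(e\) missing \(k\)-sets, and \(\mathcal N_u\) for the family of \(u\)-subsets that are not generated by \(\mathcal B\). Since \(\mathcal B\) contains all \(k\)-sets except those in \(\mathcal M\), we have \(\mathcal N_k=\mathcal M\). The base case \(u=k\) then reads \(|\mathcal N_k|=e=\binom y{k-1}\), which is the claim at \(u=k\), since \(y\ge k-1\). If \(e=0\), every \(k\)-set is present, every set of size at least \(k\) is covered by its \(k\)-subsets, and so there are no nongenerated sets.

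The key structural step is the observation recorded just before the statement. Let \(U\in\mathcal N_u\) with \(u\ge k+1\), and let \(z\) be a blocking point of \(U\). For every \(x\in U\setminus\{z\}\), the set \(U\setminus\{x\}\) contains \(z\), and no generator inside it contains \(z\). Hence \(U\setminus\{x\}\in\mathcal N_{u-1}\). So each member of \(\mathcal N_u\) has at least \(u-1\) of its \((u-1)\)-faces in \(\mathcal N_{u-1}\). This is exactly the hypothesis of the Chao--Yu partial-shadow theorem with \(\ell=u\).

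For the induction step, define \(x_u\) by \(|\mathcal N_u|=\binom{x_u}{u-1}\) whenever \(|\mathcal N_u|\) is large enough for this to make sense. Taking \(\ell=u\), the theorem gives
\[
|\mathcal N_{u-1}|\ge\binom{x_u}{u-2}.
\]
By induction, \(|\mathcal N_{u-1}|\le\binom y{u-2}\). Since \(x\mapsto\binom x{u-2}\) is increasing for \(x\ge u-3\), it follows that \(x_u\le y\), and therefore
\[
|\mathcal N_u|=\binom{x_u}{u-1}\le\binom y{u-1}.
\]
Integrality then yields the floor.

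The main obstacle is the degenerate regime, where the continuous parametrization breaks down. Two situations need handling.

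\emph{Very small \(\mathcal N_u\).} If \(0<|\mathcal N_u|<u-1\), then \(x_u<u-1\), and the partial-shadow theorem as quoted does not apply. In that case I would argue directly: a single member of \(\mathcal N_u\) already forces \(u-1\) members of \(\mathcal N_{u-1}\).

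\emph{Small \(y\).} For the case \(y<u-1\), I need to show \(\mathcal N_u=\varnothing\). Suppose instead that \(\mathcal N_u\neq\varnothing\). Then \(\mathcal N_{u-1}\) has at least \(u-1\) members. Iterating downward, \(\mathcal N_{u-j}\) has at least \(\binom{u-1}{u-j-1}\) members, since this is the trivial bound from one \(u\)-set via the pointed shadow. In particular \(e\ge\binom{u-1}{k-1}\), so \(y\ge u-1\), a contradiction.

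This downward chain also covers the small-\(|\mathcal N_u|\) case. One nongenerated \(u\)-set with blocking point \(z\) forces all \(\binom{u-1}{k-1}\) of its \(k\)-subsets through \(z\) to be missing, so \(e\ge\binom{u-1}{k-1}\) and \(y\ge u-1\). The remaining task is bookkeeping: keep the monotonicity of \(\binom x s\) valid in the range \(x\ge s-1\) at every step.
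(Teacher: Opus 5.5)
Your argument is correct and is essentially the paper's. Both rest on two ingredients: the faces of a blocked set that contain its blocking point are again blocked, and the Chao--Yu partial-shadow bound. The only difference is organization: the paper chains that bound through the pointed families $\mathcal P_\ell$ down to level $k$ and compares with $e$ directly, whereas you induct on $u$ and compare with the already-proved bound for all non-generated $(u-1)$-sets. One correction: your ``very small $\mathcal N_u$'' regime is vacuous, because $\binom{u-1}{u-1}=1$ gives $x_u\ge u-1$ whenever $\mathcal N_u\ne\varnothing$, so $0<|\mathcal N_u|<u-1$ does not force $x_u<u-1$, and the partial-shadow theorem applies to every nonempty $\mathcal N_u$.
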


\begin{proof}
For $u=k$ the number is $e$. Suppose $u>k$, and that there are
$L=\binom{x}{u-1}>0$ sets which are not generated, with $x\ge u-1$.
Choose a blocking point in each. For $k\le\ell\le u$, let
$\mathcal P_\ell$ consist of the $\ell$-subsets of these sets which
contain their chosen point. For $k<\ell\le u$, every member of
$\mathcal P_\ell$ has at least $\ell-1$ faces in $\mathcal P_{\ell-1}$.
Starting with $|\mathcal P_u|=L$, successive applications of the
partial-shadow theorem give
$|\mathcal P_k|\ge\binom{x}{k-1}$.
At each step, the real parameter defined by the actual size of
$\mathcal P_\ell$ is at least $x$, so the same lower bound on that
parameter is preserved.
All these $k$-sets are missing from $\mathcal B$. Hence
$\binom{x}{k-1}\le e$, so $x\le y$ and
$L\le\binom y{u-1}$. Integrality gives the first bound.
If $y<u-1$, even one blocking collection would contain more than $e$
missing $k$-sets. The case $e=0$ is immediate. This proves the proposition.
\end{proof}

\begin{theorem}[Binomial numbers of missing generators]\label{thm:binomial-missing}
Let $n>k\ge2$ and $t\ge1$. If
\[
 h=\binom a{k-1},\qquad k-1\le a\le m-2,
\]
for an integer $a$, then
\begin{equation}\label{eq:binomial-missing}
 |\langle\mathcal A\rangle_{\ge n}|
 \ge\sum_{u=n}^{m}\left[\binom mu-\binom a{u-1}\right]
 =U_k(N,n).
\end{equation}
Equality is attained simultaneously at every size $u\ge n$.
\end{theorem}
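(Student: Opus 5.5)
The inequality comes from Proposition~\ref{prop:continuous-missing}, applied level by level to the complete $k$-layer on $V$ with $e=h$ sets removed. The identity with $U_k(N,n)$ comes from \eqref{eq:higher-model} together with an explicit description of $\mathcal I_q(R)$ when $h$ is a binomial coefficient.

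\emph{The lower bound.} The ground set is $V$, $|V|=m$. The family $\mathcal A$ is all $k$-subsets of $V$ except $e=h=\binom a{k-1}$ of them. If $h>0$, the parameter $y$ in Proposition~\ref{prop:continuous-missing} is exactly $y=a$, an integer. For each $u$ with $n\le u\le m$, at most $\binom a{u-1}$ of the $u$-subsets of $V$ fail to be generated. When $a<u-1$ this binomial coefficient is zero, which matches the second case of \eqref{eq:continuous-missing}. Since $u\ge n>k$, the floor is irrelevant. If $h=0$, which cannot occur under $a\ge k-1$, every set would be generated. Summing $\binom mu-\binom a{u-1}$ over $u$ gives the first inequality in \eqref{eq:binomial-missing}.

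\emph{The identity with $U_k(N,n)$.} I would compute the model directly. In the max-lex family, the missing generators are the last $h$ $k$-sets through $m$. After deleting $m$, these are the last $h$ $(k-1)$-subsets of $[m-1]$ in lexicographic order. Because $h=\binom a{k-1}$, they are exactly the $(k-1)$-subsets of the final interval $\{m-a,\ldots,m-1\}$, which has $a$ elements.

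A $u$-set containing $m$ is generated unless it is blocked. In the model, a $u$-set $U\ni m$ fails to be generated exactly when $U\setminus\{m\}$ lies inside that final $a$-interval: then every $k$-subset of $U$ through $m$ is missing, so $m$ blocks $U$. Conversely, if $U\setminus\{m\}$ has a point outside the interval, I would cover $U$ by a generator through $m$ using that point, together with generators avoiding $m$. The generators avoiding $m$ form the complete layer on $[m-1]$, so sets avoiding $m$ are all generated once $u\ge k$. Hence the number of non-generated $u$-sets in the model is exactly $\binom a{u-1}$. This gives equality, and the identity with $U_k(N,n)$ follows without needing to manipulate the shadow formula \eqref{eq:higher-model}.

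The condition $a\le m-2$ guarantees that the final interval misses the point $1$, so $R\ge1$ is consistent. It also ensures that the model family has support $V$.

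\emph{Equality at every size.} The count above shows that the max-lex family meets the bound $\binom mu-\binom a{u-1}$ at each $u\ge n$ separately, so equality holds at every size simultaneously.

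\emph{Main obstacle.} The step I expect to need the most care is the model description, specifically the converse covering claim: a $u$-set through $m$ with a point outside the final interval is generated. It needs $u-1\ge k-1$ and enough room to choose the $(k-1)$-set through that outside point, both of which follow from $u\ge n>k$.
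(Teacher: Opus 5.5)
Your proposal is correct and is essentially the paper's own argument. The lower bound is Proposition~\ref{prop:continuous-missing} with $y=a$, summed over $u$. The identity and the simultaneous equality come from noting that the max-lex family omits exactly the sets $\{m\}\cup E$ with $E$ a $(k-1)$-subset of the final $a$-interval of $[m-1]$. Its non-generated $u$-sets are then precisely the $\binom a{u-1}$ sets $\{m\}\cup T$ with $T$ inside that interval. The paper first runs this count for an arbitrary point $z$ and $a$-set $X$ and then specializes to the max-lex family.

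One small correction: the floor in \eqref{eq:continuous-missing} is irrelevant because $y=a$ is an integer, not because $u\ge n>k$.
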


\begin{proof}
Apply Proposition~\ref{prop:continuous-missing} with $y=a$ and sum
over $u$. To obtain equality, choose $z\in V$ and
$X\subseteq V\setminus\{z\}$ of size $a$, and omit exactly the
$k$-sets $\{z\}\cup E$ with $E\in\binom X{k-1}$.
For $u>k$, the subsets which are not generated are precisely
\[
 \{z\}\cup T,\qquad T\in\binom X{u-1}.
\]
Sets avoiding $z$ are generated by the complete $k$-layer there.
If a set contains $z$ and a point outside $X\cup\{z\}$, a present
generator covers $z$, and its other points are covered by generators
avoiding $z$. This verifies the assertion.

Finally take $V=[m]$, $z=m$, and let $X$ be the last $a$ points of
$[m-1]$. The omitted $(k-1)$-sets are the last $\binom a{k-1}$ in
lexicographic order. The attaining family is therefore the max-lex
model, which proves the equality in \eqref{eq:binomial-missing}.
Since $a\le m-2$, its support is $V$. This proves the theorem.
\end{proof}

\subsection{The third strip}

A deletion gives more information than the number of missing generators
alone. Deleting a blocking point leaves a nearly complete family on one
fewer point; the generators through that point supply additional unions
to offset those which the remaining family fails to generate.
Here $t=3$, so $m=n+3$ and $q=n-k+3$. For this subsection put
\[
 Q(R)=\sum_{j=0}^{3}h_j^{(q)}(R).
\]
Thus $U_k(N,n)=S_2(n+2)+Q(R)$.

\begin{proposition}[Deleting a blocking point]\label{prop:third-deletion}
Suppose an $(n+1)$-subset is blocked by $z$, and let $d$ be the number
of generators through $z$. On $X=V\setminus\{z\}$, put
\[
 \mathcal B=\{A\in\mathcal A:z\notin A\},\qquad
 \mathcal G=\{A\setminus\{z\}:A\in\mathcal A,\ z\in A\}.
\]
Then precisely $e=d-R$ $k$-sets of $X$ are missing from $\mathcal B$,
and
\begin{equation}\label{eq:third-degree}
 R\le d\le\binom{n+2}{k-1}-\binom n{k-1}.
\end{equation}
If $c_u$ counts the $u$-subsets of $X$ not generated by $\mathcal B$,
then
\begin{equation}\label{eq:third-deletion}
 |\langle\mathcal A\rangle_{\ge n}|
 \ge S_2(n+2)+Q(d)-c_{n-1}-2(c_n+c_{n+1}+c_{n+2}).
\end{equation}
\end{proposition}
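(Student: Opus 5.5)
The plan is to split the large unions of \(\mathcal A\) according to whether they contain \(z\). The sets avoiding \(z\) are then controlled by \(\mathcal B\) on \(X\), and the sets containing \(z\) by \(\mathcal G\) together with \(\mathcal B\). Throughout, \(|X|=m-1=n+2\).

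\emph{The count \(e=d-R\) and the bounds \eqref{eq:third-degree}.} Since \(|\mathcal B|=N-d=\binom{n+2}{k}+R-d\) and \(\mathcal B\subseteq\binom Xk\), exactly \(e=d-R\) \(k\)-sets of \(X\) are missing from \(\mathcal B\). In particular \(d\ge R\).

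For the upper bound, let \(U\) be the \((n+1)\)-subset blocked by \(z\). None of the \(\binom n{k-1}\) \(k\)-subsets of \(U\) through \(z\) lies in \(\mathcal A\). Hence at most \(\binom{n+2}{k-1}-\binom n{k-1}\) of the \((k-1)\)-subsets of \(X\) lie in \(\mathcal G\), which gives \eqref{eq:third-degree}.

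\emph{Unions avoiding \(z\).} A subset of \(X\) is a union of members of \(\mathcal A\) precisely when it is generated by \(\mathcal B\). Among the \(S_2(n+2)\) subsets of \(X\) of sizes \(n,n+1,n+2\), all but \(c_n+c_{n+1}+c_{n+2}\) are therefore generated. This gives
\[
S_2(n+2)-(c_n+c_{n+1}+c_{n+2})
\]
large members of \(\langle\mathcal A\rangle\) avoiding \(z\).

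\emph{Unions through \(z\).} For \(Y\subseteq X\) with \(|Y|\ge n-1\), the set \(\{z\}\cup Y\) is large. I claim it is generated whenever two conditions hold: \(Y\) contains some \(G\in\mathcal G\), and \(Y\) is generated by \(\mathcal B\). Indeed, \(\{z\}\cup Y\) is then the union of the generator \(G\cup\{z\}\) with the members of \(\mathcal B\) covering \(Y\).

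First we bound the number of such \(Y\) meeting the first condition, by complementing in \(X\). The complements \(X\setminus G\) with \(G\in\mathcal G\) form \(d\) distinct \(q\)-sets, where \(q=n+3-k\). A set \(Y\) of size \(n+2-j\) contains some \(G\) exactly when its complementary \(j\)-set lies in the \(j\)-shadow of this family. By Kruskal--Katona, for \(j=0,1,2,3\) there are at least \(h_j^{(q)}(d)\) such complementary \(j\)-sets. Summing over \(j\) gives at least \(Q(d)\) sets \(Y\), of sizes \(n-1,\ldots,n+2\), that contain a member of \(\mathcal G\). We use \(q>3\) here, which follows from \(n>k\).

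From these we discard the sets \(Y\) that are not generated by \(\mathcal B\). There are at most \(c_{n-1}+c_n+c_{n+1}+c_{n+2}\) of them. The surviving sets give distinct large members of \(\langle\mathcal A\rangle\) containing \(z\), so they are disjoint from the previous count. Adding the two counts yields \eqref{eq:third-deletion}.

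\emph{Main obstacle.} I expect the delicate step to be the claim in the third paragraph, namely that \(\{z\}\cup Y\) is generated under the two stated conditions. The generators through \(z\) that lie inside \(\{z\}\cup Y\) are only known to cover \(z\), so every other point of \(Y\) must be covered by \(\mathcal B\). This is why the conservative subtraction of \(c_u\) is made for \(u=n-1,\ldots,n+2\). It is also why \(c_n,c_{n+1},c_{n+2}\) appear twice in \eqref{eq:third-deletion}, once from each half of the count. The only other point to check is that the Kruskal--Katona bound is applied with the right parameters: the complements are \(q\)-sets, there are \(d\) of them, and the level \(j=0\) contributes \(h_0^{(q)}(d)=1\) since \(d\ge R>0\).
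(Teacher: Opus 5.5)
Your proof is correct and follows the paper's argument essentially step for step: $e=d-R$ comes from $|\mathcal B|=\binom{n+2}{k}+R-d$, the degree bound from the excluded $(k-1)$-subsets of $U\setminus\{z\}$, the first count from the $S_2(n+2)-c_n-c_{n+1}-c_{n+2}$ generated subsets of $X$, and the second from the $Q(d)$ Kruskal--Katona candidates through $z$ minus those not generated by $\mathcal B$. Your closing remark is slightly loose, since a candidate $\{z\}\cup Y$ can also be generated using several generators through $z$ (exactly the slack the paper later uses at cutoff four), but this affects only the explanation, not the validity of the bound.
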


\begin{proof}
Since $|\mathcal B|=\binom{n+2}{k}+R-d$, we have $e=d-R\ge0$.
Every member of $\mathcal G$ meets the two points outside the blocked
$(n+1)$-set. This gives the upper bound on $d$.

There are $S_2(n+2)-c_n-c_{n+1}-c_{n+2}$ generated subsets of $X$
of size at least $n$. For unions through $z$, consider the subsets
$S\subseteq X$ of size at least $n-1$ containing a member of
$\mathcal G$. Complementation and Kruskal--Katona give at least
$Q(d)$ such candidates. If $S$ is generated by $\mathcal B$, adjoining
the corresponding generator through $z$ produces $S\cup\{z\}$.
At most $c_{n-1}+c_n+c_{n+1}+c_{n+2}$ candidates fail this condition.
Adding the two disjoint counts proves the proposition.
\end{proof}

A missing subset of $X$ of size at least $n$ is subtracted twice:
once from the unions avoiding $z$, and once from the candidates through
$z$. The latter subtraction is unnecessary when the generators through
$z$ already generate the candidate. We use this improvement at cutoff
four below.

\begin{theorem}[Triples in the third strip]\label{thm:third-small}
Let $n\ge3$ and let $\mathcal A$ be a family of $N$ triples with
support of size $n+3$, where
$\binom{n+2}{3}<N\le\binom{n+3}{3}$. Then
$|\langle\mathcal A\rangle_{\ge n}|\ge U_3(N,n)$.
\end{theorem}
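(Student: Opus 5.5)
We handle $k=3$, $t=3$, $m=n+3$, $q=n$, and split according to the number $h=\binom{n+3}{3}-N$ of missing triples.

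\emph{Large $n$.} By the Remark after the proof of Theorem~\ref{cor:higher-linear}, inequality \eqref{eq:whole-strip-comparison} holds for triples in the third strip once $n\ge7$. Since $h\le\binom{n+2}{2}-1$, Theorem~\ref{thm:h-criterion} then gives the bound directly. It remains to treat $3\le n\le6$, and for $n=3$ the cutoff equals $k$.

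\emph{Reduction for small $n$.} If $n=k=3$, the range is the second strip for cutoff $4$, which is outside minimum support. I would therefore apply Theorem~\ref{thm:second-minimum} at cutoff $n=4$ with support $6=4+2$ and add the $N$ triples, exactly as in the $n=k$ case of that theorem's proof.

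For $4\le n\le6$, I would argue as follows.

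\begin{enumerate}
\item By Lemma~\ref{lem:bottom-level}, assume some $(n+1)$-set is not generated, and let $z$ be a blocking point of it.
\item Apply Proposition~\ref{prop:third-deletion}. The family $\mathcal B$ on the $n+2$ points of $X$ misses $e=d-R$ triples, with $d\le\binom{n+2}{2}-\binom n2=2n+1$. Proposition~\ref{prop:continuous-missing} then bounds each $c_u$ by $\lfloor\binom{y}{u-1}\rfloor$ with $e=\binom y2$.
\item Since $e\le d\le 2n+1$, the parameter $y$ is small, roughly $y\le\sqrt{4n+2}+1$. This leaves $c_{n+1}=c_{n+2}=0$ and $c_n,c_{n-1}$ small. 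Often both vanish as soon as $y<n-1$.
\item It then remains to check
\[
Q(d)-Q(R)\ge c_{n-1}+2c_n .
\]
Here $Q(d)-Q(R)$ grows with $e=d-R$. Both sides can be compared via the colex block structure, as in the proof of Lemma~\ref{lem:small-intersections}: each new pair-block of length at most $\binom{n-2}{1}$ increases $h_2^{(q)}$.
\end{enumerate}

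\emph{Main obstacle.} The hardest part is the residual cases where $c_n>0$, i.e.\ when $y\ge n-1$. This forces $e\ge\binom{n-1}{2}$, which is comparable to $d$, so $R$ is small. There the double subtraction in \eqref{eq:third-deletion} is too lossy.

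For these cases I would use the improvement noted after Proposition~\ref{prop:third-deletion}. A missing $n$-subset $S$ of $X$ need not be subtracted from the candidates through $z$ if the link $\mathcal G$, a graph on $X$, already covers $S$ together with generators of $\mathcal B$. The missing $n$-sets come from a single blocking structure of size at most $\binom{y}{n-1}$, which is at most a handful for $n\le6$.

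I expect the finish to be a finite case check over $n\in\{4,5,6\}$ and the possible values of $e$. It uses that $\mathcal G$ has $d\ge R$ edges meeting both points outside the blocked set, so these edges cover most of $X$. If the check fails somewhere, the fallback is to choose $z$ to be the blocking point of maximum degree and recount.
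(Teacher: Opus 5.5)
Your outline follows the paper's route. You use Theorem~\ref{thm:h-criterion} via the Remark after Theorem~\ref{cor:higher-linear} for $n\ge7$ and Theorem~\ref{thm:second-minimum} at cutoff four for $n=3$. For $4\le n\le6$ you delete a blocking point through Proposition~\ref{prop:third-deletion}, with the bounds of Proposition~\ref{prop:continuous-missing}. The small cases, however, are not actually handled.

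First, your claim in step~3 that $c_{n+1}=0$ is false. At $n=4$ one can have $e=8$, so $y=(1+\sqrt{65})/2>4$ and $c_5\le\lfloor\binom y4\rfloor=2$. At $n=5$, $e=10$ gives $y=5$ and $c_6\le1$. The comparison must therefore be $Q(d)-Q(R)\ge c_{n-1}+2(c_n+c_{n+1})$, which is the paper's $L_{n,3}$. With this correction, $n=5,6$ do close by arithmetic alone, as in Appendix~\ref{app:third-strip}.

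Second, and more seriously, at $n=4$ your diagnosis of the hard cases is wrong, and your remedy does not reach them. The comparison fails at $d=5$ with $R=2,3,4$, at $d=8$ with $R=2$, and at $d=9$ with $R=1,2,3,6,7,8$. Four of these, $(d,R)=(5,3),(5,4),(9,7),(9,8)$, have $e\le2$, hence $y<3$ and $c_4=0$. There the entire loss is $c_3\le e$, while $Q_4$ barely moves: $Q_4(3),Q_4(4),Q_4(5)=25,26,26$ and $Q_4(7),Q_4(8),Q_4(9)=36,37,37$. Refusing to subtract missing $n$-sets twice does nothing in these cases.

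What is missing is a classification of the link. Here $\mathcal G$ is a graph on the six points of $X$ whose edges all meet the two points $a,b$ outside the blocked set.
\begin{enumerate}
\item For $d=9$ it consists of all nine such edges and generates all $37$ of its candidates by itself, so nothing is subtracted through $z$.
\item For $d=8$ it is $K_{2,4}$ or has a vertex adjacent to all others, and the unions through $z$ can be counted directly.
\item For $d=5$ it is either the full star, which again generates its $26$ candidates by itself, or it has no common vertex.
\end{enumerate}
In the last case no covering refinement suffices. Take edges $ac_1,ac_2,ac_3,bc_1,bc_2$ and the single missing triple $\{a,c_1,c_4\}$. The only link edge inside it is $ac_1$, and it contains no member of $\mathcal B$, so $\{z,a,c_1,c_4\}$ is not generated. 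The needed extra union must come from Kruskal--Katona instead: the five complementary four-sets have singleton shadow of size six, giving at least $27$ candidates, one more than $Q_4(5)$.

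Your closing ``finite case check'' and maximum-degree fallback leave exactly these cases unproved.
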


\begin{proof}[Proof sketch]
The missing-generator criterion covers $n\ge7$. At $n=3$, apply
Theorem~\ref{thm:second-minimum} at cutoff four and add the $N$
triples. Thus only $n=4,5,6$ remains.

By Lemma~\ref{lem:bottom-level}, we may assume that a blocked
$(n+1)$-set exists. Delete a blocking point as in
Proposition~\ref{prop:third-deletion}. Let $L_{n,3}(e)$ be the upper
bound on $c_{n-1}+2(c_n+c_{n+1}+c_{n+2})$ obtained from
\eqref{eq:continuous-missing}, with $L_{n,3}(0)=0$. The required
comparison is
\begin{equation}\label{eq:third-arithmetic}
 Q(d)-Q(R)\ge L_{n,3}(d-R).
\end{equation}
Its left side measures the gain from the generators through $z$;
its right side bounds the loss after deletion.

For triples, Theorem~\ref{thm:h-criterion} leaves only $R\le8,7,4$
at $n=4,5,6$, respectively, while \eqref{eq:third-degree} gives
$d\le9,11,13$. At $n=5,6$, substitution in
\eqref{eq:third-arithmetic} proves the result. Appendix~\ref{app:third-strip}
gives the shadow counts and the elementary
bounds for $L_{n,3}$, separately for each cutoff.

At $n=4$, the family $\mathcal G$ is a graph on six points, all of
whose edges meet two fixed points $a,b$. There are nine possible
edges, and \eqref{eq:third-arithmetic} leaves only $d=5,8,9$.
These cases use the incidence of the edges. With five edges, a
common vertex gives the full star, which generates $26$ subsets
of size at least three. If there is no common vertex, the
complementary four-sets have singleton shadow of size six;
Kruskal--Katona then supplies one more candidate than $Q(5)$.
With eight edges, either $\mathcal G$ is the complete bipartite graph
$K_{2,4}$, or one of $a,b$
is adjacent to every other point. In the first case there are $37$
generated subsets of size at least three. In the second case the
full star gives $26$, and the three remaining edges give four more.
With nine edges, every such subset meeting $\{a,b\}$ is generated,
giving $37$. The corresponding unions through $z$ do not depend
on $\mathcal B$. Adding them to the old unions supplies the
remaining deficits. The short comparisons are given in
Appendix~\ref{app:third-strip}. This proves the theorem.

\end{proof}

\hypertarget{larger-supports}{%
\section{Larger supports}\label{larger-supports}\label{sec:larger-supports}}

The proofs of Theorems~\ref{thm:general-strips} and
\ref{thm:logarithmic-strips} use the same two alternatives. On a large
support, a minimal cover by generators gives enough distinct unions
simply by omitting a few of its members. On a smaller support, we first
delete points of small degree to obtain a dense family on at most
\(n+t\) points. We then restore some outside points using original
generators. Each point restored permits one more point to be omitted
from the remaining set while keeping the union above the cutoff.

Fix a family \(\mathcal A\) of \(N\) distinct \(k\)-sets in the
\(t\)-strip, where \(n>k\ge3\) and \(t\ge2\), and put
\[
 |\operatorname{supp}(\mathcal A)|=n+c,\qquad M=n+t,\qquad T=S_t(M).
\]
The max-lex count is at most \(T\). The minimum support, \(c=t\),
is covered by Sections~\ref{small-intersections} and
\ref{sec:higher-minimum} in both ranges below. For \(c>t\), we shall
prove the stronger bound \(|\langle\mathcal A\rangle_{\ge n}|\ge T\).

\subsection{Reducing the support}\label{subsec:reducing-support}

A \emph{generator cover} is a subfamily whose union is the full support.
We first allow ourselves to omit \(J\) members of such a cover,
where \(J\) is a positive integer.

\begin{lemma}\label{lem:large-support-cover}
If $J\ge1$ and $c\ge kJ$, then
\[
 |\langle\mathcal A\rangle_{\ge n}|
 \ge S_J(\lceil n/k\rceil+J).
\]
\end{lemma}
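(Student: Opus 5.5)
Choose an inclusion-minimal generator cover \(\mathcal C=\{C_1,\ldots,C_r\}\) of the support \(V\), where \(|V|=n+c\). By minimality, each \(C_i\) contains a private point \(x_i\) that lies in no other member of \(\mathcal C\). Consequently, for distinct subsets \(I\subseteq[r]\) with \(I\ne[r]\), the unions \(\bigcup_{i\notin I}C_i\) are distinct. Indeed, such a union contains \(x_i\) exactly when \(i\notin I\).

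We count those omission sets \(I\) of size at most \(J\) whose union stays large. Omitting \(j\le J\) members removes at most \(kj\le kJ\le c\) points. The union of the remaining members therefore has size at least \(n+c-kj\ge n\) and lies in \(\langle\mathcal A\rangle_{\ge n}\). (For \(I=\varnothing\) it is \(V\) itself.) This gives at least \(S_J(r)=\sum_{j=0}^{J}\binom rj\) distinct large unions, provided \(r\ge J\) so that the subfamilies are nonempty. Since \(|V|\ge n\) and \(k\ge1\), we have \(r\ge\lceil n/k\rceil\). The count \(S_J(r)\) is nondecreasing in \(r\), so it is at least \(S_J(\lceil n/k\rceil)\).

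To reach \(\lceil n/k\rceil+J\), I will use the lower bound \(r\ge|V|/k\ge(n+kJ)/k=n/k+J\). Since \(r\) is an integer, this gives \(r\ge\lceil n/k\rceil+J\). Then \(S_J(r)\ge S_J(\lceil n/k\rceil+J)\) by monotonicity, which is the claim. The only delicate point is this counting step: both distinctness and largeness rest on the private points and on the bound \(c\ge kJ\). Everything else is immediate. I also need \(r>J\) so that the remaining subfamily is nonempty, and this follows from \(r\ge\lceil n/k\rceil+J\) together with \(n\ge1\).
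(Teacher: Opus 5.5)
Your proof is correct and follows the paper's argument: a minimal generator cover whose members have private points, distinct unions after omitting at most \(J\) members, the size bound \(n+c-kJ\ge n\), and \(r\ge\lceil(n+c)/k\rceil\ge\lceil n/k\rceil+J\) together with monotonicity of \(S_J\). Your explicit check that \(r>J\), so that the remaining subfamily is nonempty, is a small detail the paper leaves implicit.
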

\begin{proof}
Choose an inclusion-minimal subfamily covering the support. It has
at least $\lceil(n+c)/k\rceil\ge\lceil n/k\rceil+J$ members.
Each has a point belonging to no other member of the cover; otherwise
it could be removed. Thus distinct subfamilies have distinct unions.
Omitting at most $J$ members removes at most $kJ$ points and leaves a union of
size at least $n+c-kJ\ge n$. Counting these choices proves the lemma.
\end{proof}

For smaller supports we use the degree of a point, the number of
current generators containing it. On a support \(Y\) of size \(s\),
suppose \(Y\setminus F\), with \(|F|\le t\), is not generated.
Some point \(x\) in it is uncovered by its internal generators, so
every generator through \(x\) meets \(F\). Hence
\(\deg(x)\le t\binom{s-2}{k-2}\). If every degree exceeds this
bound and \(s\ge M\), all omissions of at most \(t\) points are
generated, giving at least \(T\) large unions.

Otherwise delete a point of minimum degree and its generators. If
another point disappears, all its generators contained the chosen
point. Minimality of the degree implies that the two points had the
same generators. Hence all disappearing points lie in every deleted
generator, and at most \(k\) points disappear at one step.
For \(t<c<kJ\), write \(a=c-t\) and continue until the support has
at most \(M\) points. Unless the degree alternative has finished the
proof, we reach a set \(Z\) of size \(M-i\), where \(0\le i<k\),
after at most \(a\) deletions. Put
\begin{equation}\label{eq:common-missing-budget}
 \Lambda=at\binom{n+kJ-2}{k-2},\qquad
 E=\binom{M-1}{k-1}+\Lambda-1.
\end{equation}
At most \(\Lambda\) generators were lost. Consequently at most
\(E-\bigl(\binom Mk-\binom{M-i}k\bigr)\) generators are missing
on \(Z\). The subtraction records the smaller capacity of the
terminal support; a negative bound means that this support cannot occur.

\subsection{Recovering deleted points}

The first reduction controls the number of missing generators.
We now use that control to make every omission of at most \(t+d\)
points possible, where \(d\) is the number of extra omissions desired.
If an omission fails, all \(k\)-sets through an uncovered point in the
remaining set are missing. Delete that point and repeat. The missing
collections are disjoint, so the budget limits these further deletions.
The lemma then restores enough outside points; the parameter \(b\)
bounds how many must be covered.

\begin{lemma}\label{lem:recover-deleted-points}
Suppose the reduction above ends on $Z$. Let $1\le d\le a$ and
$b\ge d$ be integers, with $n>kb$. If
\begin{equation}\label{eq:recovery-budget}
 E<(b-d+1)\binom{n-b-1}{k-1},
\end{equation}
then
\begin{equation}\label{eq:recovery-count}
 |\langle\mathcal A\rangle_{\ge n}|\ge S_{t+d}(M+d-kb).
\end{equation}
\end{lemma}
\begin{proof}
\emph{Making the omissions possible.} Starting with $Y=Z$, delete an
uncovered point whenever an omission of at most $t+d$ points is not generated. After $j$ such deletions,
the next missing star has at least
$\binom{n-i-d-j-1}{k-1}$ members. These stars are disjoint: a later
one avoids every previously deleted point.

The initial deficit $i$ contributes to the same count. By Pascal's
identity, the amount $\binom Mk-\binom{M-i}k$ subtracted from $E$
is at least the sum of the first $i$ terms of the sequence starting at
$\binom{n-d-1}{k-1}$. After $w$ further deletions, therefore,
\[
 E\ge\sum_{\ell=0}^{i+w-1}\binom{n-d-\ell-1}{k-1}.
\]
If $i+w\ge b-d+1$, its first $b-d+1$ terms contradict
\eqref{eq:recovery-budget}. Thus the process stops with
$q:=d+i+w\le b$. The condition $n>kb$ ensures that the witness
sets have at least $k$ points up to the first forbidden deletion.
Every omission of at most $t+d$ points from the final $Y$ is generated.

\emph{Restoring points.} Choose $d+i$ original points outside $Z$ and
the $w$ points just deleted. There are enough outside points because $d\le a$. Cover
these $q$ points by original generators, and let $U$ be their union.
The cover uses at most $q(k-1)$ points of $Y$, so
\[
 |Y\cup U|\ge M+d,\qquad
 |Y\setminus U|\ge M+d-kq\ge M+d-kb.
\]
For each $F\subseteq Y\setminus U$ of size at most $t+d$, the union
$U\cup(Y\setminus F)$ is generated and has size at least $n$.
Different omissions give different unions. This proves the lemma.
\end{proof}

The point of the common budget is that an initial loss of support and
a later deletion have the same effect. We need only control their sum.
For the final comparison, splitting the binomial ratio into the old
\(t\) factors and the \(d\) new ones gives
\begin{equation}\label{eq:recovery-binomial-comparison}
 \frac{\binom{M+d-kb}{t+d}}{\binom Mt}
 \ge e^{-kbt/(n-kb+1)}
       \left(\frac{n-kb+1}{t+d}\right)^d.
\end{equation}
The exponential bounds the loss from the cover; the power is the gain
from the extra omissions. This follows directly from
\(\log(1-x)\ge-x/(1-x)\) for \(0\le x<1\). We now choose the parameters so that the
gain exceeds the loss.

\subsection{An explicit bound}

\begin{proof}[Proof of Theorem~\ref{thm:general-strips}]
We prove the slightly stronger range $n\ge(5/2)k^2t$. Minimum support
follows from Theorems~\ref{cor:second-minimum} and
\ref{cor:higher-linear}, so suppose $c>t$. A geometric-series estimate
gives $T<(11/10)\binom Mt$.

Take $J=2t$. The elementary comparison
\[
 \frac{\binom{\lceil n/k\rceil+2t}{2t}}{\binom Mt}
 \ge\left(\frac{n^2}{2k^2t(n+t)}\right)^t>\frac{11}{10}
\]
and Lemma~\ref{lem:large-support-cover} settle $c\ge2kt$.
For the remaining supports, use the reduction with $a=c-t$, and
take $d=\lceil a/k\rceil$. Use $b=2d+1$ for triples and $b=3d+2$
for $k\ge4$. These choices satisfy $d\le a$ and $n>kb$.

Two comparisons remain: the budget inequality
\eqref{eq:recovery-budget}, and a ratio greater than $11/10$ in
\eqref{eq:recovery-binomial-comparison}. The first limits the deletions;
the second makes the recovered count exceed $T$.

For the budget, divide by
$P=\binom{n-b-1}{k-1}$. For $k\ge4$, first remove the factor
$(k-1)/(n-b-k+1)$ from the ratio in the deletion term. Both
remaining binomial products are at most $e$: use $b\le7t$ and
$1+x\le e^x$ for $x\ge0$. The coefficient of the second product
is at most $a/(2k)$. Hence $E/P<3+2a/k\le2d+3$.
For triples, keeping the quadratic factors gives
$E/P<2+a/3\le d+2$.

For the recovered count, substitute the same choices in
\eqref{eq:recovery-binomial-comparison}. One may use
$n-kb\ge10t$ for triples, $n-kb\ge12t$ for $k=4$, and
$n-kb\ge kt((5/2)k-7)$ for $k\ge5$. Since $d\le2t$, these bounds
give the required ratio for triples and for $k\ge5$.
For $k=4,d\ge2$, the ratio
is at least $(4e^{-4/3})^d\ge16e^{-8/3}>11/10$.
For $k=4,d=1$, use $b=5$ and $n-20\ge30t$.
Lemma~\ref{lem:recover-deleted-points} now
gives more than $T$ large unions, completing the proof.
\end{proof}

\subsection{Allowing the set size to grow}

For large \(k\), the same argument can tolerate more deletions.
We let their number grow as a power \(k^\beta\), with \(\beta<1\),
so covering them still uses a vanishing fraction of the support.
We also increase \(J\) slightly above \(2t\); the extra omissions
make the direct cover count work at the smaller cutoff.

\begin{proof}[Proof of Theorem~\ref{thm:logarithmic-strips}]
It suffices to take $0<\varepsilon\le1$. Choose
$J=2t+\lceil\varepsilon t/2\rceil$. Since
$J\le(2+\varepsilon/2)t+1$, the assumed bound on $n$ gives
$k^2J/n\le\gamma\log k$ for some fixed $\gamma<1$, uniformly for
$t\ge2$. Fix $\gamma<\beta<1$. The first inequality lets the number of permitted
deletions grow faster than their estimated cost; the second keeps the
covering cost small compared with the support. All limits below are taken as
$k\to\infty$. The estimates are uniform in
$t$ and in larger $n$; their constants may depend on $\varepsilon$
and $\beta$.

Standard binomial estimates give, for a positive constant $C_\varepsilon$,
\[
 \frac{\binom{\lceil n/k\rceil+J}{J}}{\binom Mt}
 \ge\left(\frac{k^{\varepsilon/2}}
                 {C_\varepsilon(\log k)^2}\right)^t.
\]
The right side tends to infinity, whereas
$T/\binom Mt=1+O(t/n)=1+o(1)$. Thus the cover lemma settles
$c\ge kJ$ for sufficiently large $k$.

For $t<c<kJ$, reduce the support and put
$d=\lceil(c-t)/k\rceil$, $b=d+\lceil k^\beta d\rceil-1$.
Here $d\le J\le3t$, and $kb/n=O(k^{\beta-1}\log k)=o(1)$.
Writing $P=\binom{n-b-1}{k-1}$, comparison of binomial products gives
\[
 \frac EP\le2+O(dk^\gamma\log k)
       =o(k^\beta d)<b-d+1.
\]
To see the exponent, the product contributed by the deletion loss
is at most $\exp(k^2J/n+o(1))=O(k^\gamma)$; the contribution from
$b$ is $o(1)$ because $\beta<1$. The factor preceding that product
is $O(d\log k)$. These observations also give uniformity of the
estimates. The recovery lemma therefore applies.

Finally, the logarithm of the right side of
\eqref{eq:recovery-binomial-comparison} is
\[
 d\bigl(2\log k-\log\log k-O(1)\bigr)
       -O(dk^{\beta-1}\log k),
\]
or larger. It tends to infinity uniformly. The covering cost is
$o(d)$, while each extra omission contributes a factor of order $n/t$. Hence the recovered count exceeds
$T$.

For large $k$, the assumed cutoff also implies
$n\ge(t+1)(k-1)$, uniformly for $t\ge2$, so the minimum-support
case follows from Theorems~\ref{cor:second-minimum} and
\ref{cor:higher-linear}. Apply the first assertion with
$\varepsilon/2$ and use $1/t\le\varepsilon/2$ to obtain the last
assertion. This completes the proof.
\end{proof}

The additional \(1\) in the numerator allows \(J>2t\) even when
\(t\) is fixed. When \(t\) is large, its contribution can be absorbed
into \(\varepsilon\), as in the last assertion of the theorem.

\section{Induction on the support}\label{sec:support-induction}

The minimum-support theorems give the initial cases of a possible
induction. They do not give its step. Deleting or identifying points
can reduce the number of distinct generators, and the model bound may
drop sharply when this happens. To prove the bound for the original family, we must account for this
decrease by counting unions beyond those supplied by the reduced family.

We first formulate a sufficient conjecture for an induction covering
all strips. We then prove estimates for deletion which will give an
unconditional argument for triples in Section~\ref{sec:second-small}.
For a finite generator family $\mathcal A$ and an integer cutoff $h$, write
\[
 M_h(\mathcal A)=|\langle\mathcal A\rangle_{\ge h}|,
 \qquad U_k(0,h)=0.
\]
Here $k\ge1$, and the convention for $U_k(0,h)$ covers an empty
generator family. For a nonnegative integer $a$ and $h<k$, the definition
gives $U_k(a,h)=U_k(a,k)$: every generated union then meets the cutoff.

\subsection{Identifying two points}

Let $\mathcal A$ consist of $N$ distinct $k$-sets, $k\ge2$, on a
support $V$. For distinct $x,y\in V$, let
$\pi:V\longrightarrow V\setminus\{y\}$ send $y$ to $x$ and fix
every other point. Apply $\pi$ to sets and families by taking their images.
The image
$\mathcal I=\pi(\langle\mathcal A\rangle)$ is union-closed.
Some image generators have size $k-1$, and others coincide. However,
unions of the original generators may supply further $k$-sets in
$\mathcal I$. Thus we use its entire $k$-th level.

\begin{lemma}[Recovering image generators]\label{lem:ind-image}
Let $d_{xy}$ count generators containing both $x,y$, and put
\[
 \mathcal L_x=\{A\setminus\{x\}:A\in\mathcal A,\ x\in A,\ y\notin A\},
 \qquad
 \mathcal L_y=\{A\setminus\{y\}:A\in\mathcal A,\ y\in A,\ x\notin A\}.
\]
Let $r_{xy}$ count $(k+1)$-sets $W$ containing $x,y$ and at least
two generators, for which neither $W\setminus\{x\}$ nor
$W\setminus\{y\}$ is a generator. Then
\begin{equation}\label{eq:ind-image-size}
 |\mathcal I_k|
 =N-d_{xy}-|\mathcal L_x\cap\mathcal L_y|+r_{xy}.
\end{equation}
For a cutoff $n\ge k$, let $b_{xy}$ count generated $n$-sets
containing $x,y$, and let
\[
 e_{xy}=\sum_{Y\in\mathcal I_{\ge n}}
 \bigl(|\{W\in\langle\mathcal A\rangle:\pi(W)=Y\}|-1\bigr).
\]
Then
\begin{equation}\label{eq:ind-fibres}
 M_n(\mathcal A)=|\mathcal I_{\ge n}|+b_{xy}+e_{xy}.
\end{equation}
\end{lemma}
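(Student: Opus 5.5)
The plan is to prove both identities by classifying members \(W\in\langle\mathcal A\rangle\) by the size of \(\pi(W)\). The basic fact is that \(|\pi(W)|=|W|-1\) when \(x,y\in W\), and \(|\pi(W)|=|W|\) otherwise.

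\emph{The level identity \eqref{eq:ind-image-size}.} A \(k\)-set of \(\mathcal I\) is \(\pi(W)\) for some \(W\in\langle\mathcal A\rangle\). By the basic fact, either \(|W|=k\) and \(W\) is a generator not containing both \(x\) and \(y\), or \(|W|=k+1\) and \(x,y\in W\). I treat the two sources in turn.

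For the first source there are \(N-d_{xy}\) generators. The map \(\pi\) fixes sets avoiding \(y\) and sends a set containing \(y\) but not \(x\) to one containing \(x\) but not \(y\). Hence \(\pi\) is injective within each of the classes ``avoids \(y\)'' and ``contains \(y\) but not \(x\)''. The only collisions are between a generator \(A\ni x\) with \(y\notin A\) and a generator \(A'\ni y\) with \(x\notin A'\) satisfying \(A\setminus\{x\}=A'\setminus\{y\}\). These are exactly the members of \(\mathcal L_x\cap\mathcal L_y\), and each merges two images into one. So this source contributes \(N-d_{xy}-|\mathcal L_x\cap\mathcal L_y|\) distinct \(k\)-sets.

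For the second source, a generated \((k+1)\)-set \(W\ni x,y\) is exactly a \((k+1)\)-set containing at least two generators, since two distinct \(k\)-subsets of \(W\) have union \(W\). Its image is \(W\setminus\{y\}\), so distinct such \(W\) give distinct images, because \(W\) is recovered by adjoining \(y\). The image already arises from a generator precisely when some generator maps to \(W\setminus\{y\}\). The only candidates are \(W\setminus\{y\}\) itself and \(W\setminus\{x\}\), since \(\pi(W\setminus\{x\})=W\setminus\{y\}\); no other \(k\)-set maps there. Thus the new \(k\)-sets are counted exactly by \(r_{xy}\), and adding the two sources gives \eqref{eq:ind-image-size}.

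\emph{The fibre identity \eqref{eq:ind-fibres}.} Since \(\mathcal I=\pi(\langle\mathcal A\rangle)\), every \(Y\in\mathcal I_{\ge n}\) has a nonempty fibre. Because \(|W|\ge|\pi(W)|\), every member of each such fibre lies in \(\langle\mathcal A\rangle_{\ge n}\). The fibres are disjoint, so together they contain
\[
 \sum_{Y\in\mathcal I_{\ge n}}\bigl|\{W\in\langle\mathcal A\rangle:\pi(W)=Y\}\bigr|=|\mathcal I_{\ge n}|+e_{xy}
\]
members of \(\langle\mathcal A\rangle_{\ge n}\). The remaining large members \(W\) satisfy \(|\pi(W)|<n\le|W|\). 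This forces \(x,y\in W\) and \(|W|=n\), and these sets are counted by \(b_{xy}\). Adding the two counts gives \eqref{eq:ind-fibres}.

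The only delicate step is the collision analysis in the first identity. One must check that no other coincidences occur among images of generators, and that an image of a \((k+1)\)-set never coincides with the image of a generator unless \(W\setminus\{x\}\) or \(W\setminus\{y\}\) is itself a generator.
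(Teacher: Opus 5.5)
Your proof is correct and follows the same route as the paper. Both identities come from classifying generated sets by whether they contain both $x$ and $y$: generator collisions are matched with $\mathcal L_x\cap\mathcal L_y$, the new image $k$-sets with generated $(k+1)$-sets through $x,y$ whose two indicated deletions are not generators, and the fibre count splits into the $n$-sets through $x,y$ that fall below the cutoff and the coincidences counted by $e_{xy}$. You supply details the paper states in a line each, such as injectivity within each class and the two possible $k$-set preimages of $W\setminus\{y\}$.
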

\begin{proof}
There are $N-d_{xy}-|\mathcal L_x\cap\mathcal L_y|$ distinct
$k$-set images of generators. Every other image $k$-set comes from
a unique $(k+1)$-set through $x,y$. Such a set is generated exactly
when it contains at least two generators. Its image has already
been counted exactly when one of its two indicated deletions is a
generator. This proves \eqref{eq:ind-image-size}.

Identification decreases a set's size by at most one. Among unions
meeting the cutoff, precisely the $n$-sets through both identified
points fall below it. All other losses in the count are coincidences
of images, counted by $e_{xy}$. This proves \eqref{eq:ind-fibres}.
\end{proof}

Put $L_{xy}=\min\{N,|\mathcal I_k|\}$. If the conjectured bound is
known on smaller supports, select $L_{xy}$ members of $\mathcal I_k$.
Their closure is contained in $\mathcal I$, so the lemma gives
\begin{equation}\label{eq:ind-payment}
 M_n(\mathcal A)\ge U_k(L_{xy},n)+b_{xy}+e_{xy}.
\end{equation}
In particular, retaining $N$ image generators finishes the step.
When generators are lost, the following is a sufficient local
conjecture.

\begin{conjecture}[Compensation under identification]\label{conj:ind-compensation}
Suppose $n\ge k\ge2$, $N>\binom nk$, and $\mathcal A$ has
$N$ distinct $k$-sets with support larger than $\sigma_k(N)$.
There are distinct support points $x,y$ for which
\begin{equation}\label{eq:ind-compensation}
 b_{xy}+e_{xy}\ge U_k(N,n)-U_k(L_{xy},n).
\end{equation}
\end{conjecture}

This is the precise comparison needed in \eqref{eq:ind-payment}
when we use only the model bound for the image family. It is an
additional conjecture; the minimum-support results do not establish it.
It may demand more than is needed for a particular family: a surplus
above the model bound in the image would also contribute to the induction.

\begin{theorem}[Conditional support induction]\label{thm:ind-conditional}
Fix $k\ge2$. Suppose the layered conjecture holds on minimum
support in every strip, and suppose
Conjecture~\ref{conj:ind-compensation} holds for this $k$.
Then the layered conjecture holds on arbitrary support in every strip.
\end{theorem}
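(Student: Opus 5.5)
We argue by induction on the support-size $|\operatorname{supp}(\mathcal A)|$, with $k$ fixed and all $N$, $n$ and strips treated at once. We work with the generator formulation: it suffices to show $M_n(\mathcal A)\ge U_k(N,n)$ for every family $\mathcal A$ of $N$ distinct $k$-sets, since every union-closed family contains the closure of its $k$-sets.

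\emph{Preliminary reductions.} If $N\le\binom nk$, the claimed bound is trivial or easy. In that range $U_k(N,n)$ is at most the count obtainable inside $[n]$, and we need to check the convention in that range. We will verify directly that $U_k(N,n)\le 1$ there, or otherwise handle it by the first strip at a smaller cutoff. Concretely, when $N\le\binom nk$ the max-lex family lives on $\sigma_k(N)\le n$ points, so at most its full support can reach size $n$, and it reaches it only if $\sigma_k(N)=n$. Hence $U_k(N,n)\le1$. This bound is attained trivially unless $U_k(N,n)=1$, in which case $\mathcal A$ has support of size at least $n$ and its full union is a large member. So we may assume $N>\binom nk$, and $N$ then lies in some strip $t\ge1$. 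If the support-size equals $\sigma_k(N)=n+t$, the hypothesis (minimum-support case) gives the bound directly. This is the base of the induction for each $N$.

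\emph{Inductive step.} Suppose the support is larger than $\sigma_k(N)$, and that the layered bound $M_n(\mathcal A')\ge U_k(|\mathcal A'|,n)$ holds for every generator family $\mathcal A'$ of $k$-sets with smaller support and every cutoff. Conjecture~\ref{conj:ind-compensation} supplies distinct points $x,y$ satisfying \eqref{eq:ind-compensation}. Form $\mathcal I=\pi(\langle\mathcal A\rangle)$ and select $L_{xy}=\min\{N,|\mathcal I_k|\}$ members of $\mathcal I_k$. Their support lies in $V\setminus\{y\}$, which is strictly smaller. By induction, together with $\langle\text{selected}\rangle\subseteq\mathcal I$ (as $\mathcal I$ is union-closed), we get $|\mathcal I_{\ge n}|\ge U_k(L_{xy},n)$. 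Equation \eqref{eq:ind-fibres} of Lemma~\ref{lem:ind-image} then gives \eqref{eq:ind-payment}. Adding \eqref{eq:ind-compensation} yields $M_n(\mathcal A)\ge U_k(N,n)$.

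\emph{Points to check.} One check is that the induction hypothesis is applied legitimately when $L_{xy}$ is small, for example when it falls below $\binom nk$ or to $0$. This is handled by the conventions $U_k(0,h)=0$ and the trivial range above. The other is that $\mathcal I_k$ consists of genuine $k$-sets, so the selected family is admissible. I expect no serious obstacle: the whole difficulty has been pushed into Conjecture~\ref{conj:ind-compensation}. The only delicate step is making the induction well-founded uniformly over strips and cutoffs, ensuring that the hypothesis "holds on minimum support in every strip" covers the base case for every $(N,n)$ met in the recursion. Finally, equality is attained by $\langle\mathcal F_k(N)\rangle$ by definition of $U_k$.
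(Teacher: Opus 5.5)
Your proposal is correct and follows the paper's own proof. The paper likewise inducts on support size simultaneously over all $N$ and all cutoffs, disposes of $N\le\binom nk$ via the full support, takes minimum support as the assumed base case, and closes the step by applying induction in \eqref{eq:ind-payment} and then adding \eqref{eq:ind-compensation}. Your explicit check that $U_k(N,n)\le1$ in that trivial range simply spells out what the paper states in one line.
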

\begin{proof}
Induct on the support size, simultaneously for all numbers of
generators and all cutoffs. If $N\le\binom nk$, the model count is
zero or one, and the bound follows from the full support. Otherwise,
minimum support is an assumed initial case. On a larger support,
choose $x,y$ as in Conjecture~\ref{conj:ind-compensation}. The selected
$L_{xy}$ image generators have smaller support. Apply induction
in \eqref{eq:ind-payment} and then \eqref{eq:ind-compensation}.
Their sum is $U_k(N,n)$, proving the step.
\end{proof}

The induction is on support, rather than strip number: the number of
generators remaining after an identification can belong to a different
strip. At the endpoint $N=\binom rk$, minimum support forces the
complete $k$-th level on $r$ points, so that initial case is immediate.
The corresponding assertion on larger support still requires an
inductive step.

\subsection{Deleting and restoring a point}

There is another way to retain information after a deletion. Let $p$
be a support point, let $\mathcal B$ be the generators avoiding it,
and let
$\mathcal H=\{A\setminus\{p\}:p\in A\in\mathcal A\}$ be its
\emph{link}. Put $a_j=M_j(\mathcal B)$. Let $I$ count the unions
of $\mathcal B$ of size at least $n-1$ which contain no member of
$\mathcal H$, and let $G$ count the sets of size at least $n-1$
in $\langle\mathcal B\cup\mathcal H\rangle
 \setminus\langle\mathcal B\rangle$. Then
\begin{equation}\label{eq:ind-adjacent-exact}
 M_n(\mathcal A)=a_n+a_{n-1}+G-I.
\end{equation}
Indeed, a union through $p$, with $p$ removed, uses at least one
link member. An old union has such a representation precisely when
it contains a link member, and every new mixed union uses one.

Thus deletion has two effects: some old unions cannot be extended
through $p$, while the link creates new mixed unions. Identity
\eqref{eq:ind-adjacent-exact} keeps both effects, for every $k$ and
every strip. If bounds
$a_n\ge\eta_0$ and $a_{n-1}\ge\eta_1$ are available, the exact
remaining comparison is
\[
 (a_n-\eta_0)+(a_{n-1}-\eta_1)+G-I
 \ge U_k(N,n)-\eta_0-\eta_1.
\]
Both surpluses matter. A proof that controls only the new unions
can fail even when the lower-cutoff count already exceeds the target.

To carry a level across the deletion, we seek an injection sending
each old $(n-1)$-union $U$ to $U\cup H$ for some link member $H$.
Here is one general condition that supplies such an injection.

\begin{lemma}[Disjoint link members]\label{lem:ind-disjoint}
Let $\mathcal H$ contain $\binom{s}{\lfloor s/2\rfloor}$ pairwise
disjoint $s$-sets, where $s\ge1$. For every family $\mathcal U$ of
sets of one size, there is an injection
$\phi:\mathcal U\longrightarrow\{U\cup H:U\in\mathcal U,\ H\in\mathcal H\}$ with
$\phi(U)=U\cup H$ for some $H\in\mathcal H$.
\end{lemma}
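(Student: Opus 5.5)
The plan is to obtain $\phi$ from Hall's marriage theorem, or equivalently from a fractional matching, in the bipartite graph joining each $U\in\mathcal U$ to the sets $U\cup H_i$. Here $H_1,\dots,H_L$ are the pairwise disjoint $s$-sets supplied by the hypothesis, with $L=\binom{s}{\lfloor s/2\rfloor}$. All other members of $\mathcal H$ are discarded; the injection will use only these $L$ link members. Let $u$ be the common size of the members of $\mathcal U$.

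First I would describe the fibres. Fix an image $Y$ and an index $i$. Then $U\cup H_i=Y$ holds exactly when $H_i\subseteq Y$ and $U=Y\setminus S$ for some $S\subseteq H_i$ with $|S|=|Y|-u$. Since $U$ and $Y$ determine $S$, the edges entering $Y$ through $H_i$ come from at most $\binom{s}{|S|}\le L$ members of $\mathcal U$. Second, for a fixed $U$, the $L$ images $U\cup H_i$ have sizes $u+s-|U\cap H_i|$. Two of them coincide only when both $H_i$ and $H_j$ lie in $U$, in which case the image is $U$ itself. Thus each $U$ either has many distinct images or contains several of the $H_i$.

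Next I would verify Hall's condition through a weighting. Each $U$ distributes total weight $1$ over the edges $(U,i)$, and the goal is a distribution under which every image $Y$ receives total weight at most $1$. Integrality of bipartite matchings then turns this into an injection. The natural first choice is to give $U$ the weight $1/L$ on each index $i$. Through a fixed $i$, the set $Y$ then receives at most $\binom{s}{|S|}/L\le1$; this is the only place where the value $L=\binom{s}{\lfloor s/2\rfloor}$ enters. To control the sum over different $i$, I would use the fact that $U=Y\setminus S$ with $S\subseteq H_i$, and that these sets are distinct for different $i$. I would then try to bound the total through the LYM-type count of Section~\ref{antichains-and-small-certificates}, applied inside each $H_i$ to the traces $U\cap H_i$.

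I expect this summation over several indices $i$ with $H_i\subseteq Y$ to be the main obstacle, since the uniform weighting may overload such a $Y$. If it fails, I would refine the weighting. Each $U$ would place its weight only on the indices where $|U\cap H_i|$ takes an extreme value, for instance where the trace is smallest, so that the image is as large as possible. Images would then be graded by size, which should separate the contributions of different $i$. Should this still fall short, the fallback is a direct induction on $L$ or on $|\mathcal U|$. There I would apply Hall's condition to a minimal violating subfamily and derive a contradiction from the disjointness of the $H_i$ together with the bound $\binom{s}{r}\le L$ on the number of preimages through a single $H_i$.
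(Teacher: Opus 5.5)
There is a genuine gap. Your framework is sound: Hall's condition via a fractional matching works, and so does the per-index fibre bound $\binom{s}{|Y|-u}\le L$. But the argument stops at the step that needs an idea, and the refinement you propose goes the wrong way. The overload you anticipate is real, so no LYM-type summation over the traces can bound it.

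Take $s=2$, $H_1=\{a,b\}$, $H_2=\{c,d\}$, and let $\mathcal U$ contain $Y\setminus\{a\}$, $Y\setminus\{b\}$, $Y\setminus\{c\}$, $Y\setminus\{d\}$ for some $Y\supseteq\{a,b,c,d\}$. With uniform weights $1/2$, each of these four sets sends $1/2$ to $Y$, so $Y$ receives total weight $2$. If instead each $U$ puts its weight on the index of smallest trace, $Y$ receives weight $4$. An injection still exists here, but neither weighting finds it.

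The missing observation is the one you already made for coincident images, applied on the image side. If some $H_i\subseteq U$, send $U$ to $U\cup H_i=U$. No other member of $\mathcal U$ can reach this $u$-set, since any predecessor of it is a $u$-subset of it. The remaining $U$ contain no $H_j$. For such $U$, each image $Y=U\cup H_i$ has size greater than $u$, so it cannot collide with a self-assigned set. It also contains $H_i$ but no other $H_j$: a second one, being disjoint from $H_i$, would have to lie in $U$. Hence every predecessor of $Y$ comes through the single index $i$, and there are at most $\binom{s}{|Y|-u}\le L$ of them. By the same disjointness argument, each such $U$ has exactly $L$ distinct images. Left degree $L$ and right degree at most $L$ then give Hall's condition by counting edges.

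In your language, the correct refinement favours the \emph{largest} trace, self-assigning when the trace is all of $H_i$. Uniform weighting on the remaining sets then works. Your fallback through a minimal Hall-violating subfamily does not say what contradiction disjointness produces; the contradiction it needs is exactly this observation.
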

\begin{proof}
Use just the indicated disjoint members, and put
$b=\binom{s}{\lfloor s/2\rfloor}$. Assign to itself any $U$ that
contains one of them. Each remaining $U$ has $b$ distinct possible
images. Such an image contains exactly one of the chosen members:
if it contained a second, that disjoint member would already be in
$U$. An image of size $|U|+j$ therefore has at most
$\binom sj\le b$ predecessors. The bipartite graph of possible
images has left degree $b$ and right degree at most $b$.
Hall's condition follows by counting edges, and a matching proves
the lemma.
\end{proof}

For triples we can dispense with disjointness.

\begin{lemma}[Two edges]\label{lem:ind-two-edge}
Let $e,f$ be distinct two-element sets, and let $\mathcal U$ be
a family of sets of the same size. There is an injection
$\phi:\mathcal U\longrightarrow\{U\cup e,U\cup f:U\in\mathcal U\}$ such that
$\phi(U)\in\{U\cup e,U\cup f\}$ for every $U\in\mathcal U$.
\end{lemma}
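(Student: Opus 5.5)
We would treat this as a choice problem on a multigraph. The vertices are the candidate images \(U\cup e\) and \(U\cup f\). Each \(U\in\mathcal U\) is an edge joining its two candidates, and it is a loop when \(U\cup e=U\cup f\). An injection \(\phi\) is then the same thing as an orientation assigning each edge to one of its endpoints, with no vertex used twice. Such a choice exists if and only if every connected component has at most as many edges as vertices, that is, at most one cycle, where loops count as cycles. This is the standard Hall-type criterion for edges choosing endpoints.

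First we dispose of the easy sets. If \(e\subseteq U\), then \(U\cup e=U\), and we would like to send \(U\) to itself. Distinct members of \(\mathcal U\) are distinct vertices, so these self-assignments never collide with one another. We must still check that no other \(U'\) is forced onto the same vertex \(U\). Since \(|U'|=|U|\), that would require \(U'=U\). Hence the vertices of size \(u=|U|\) are occupied only by their own sets, and we may delete them, together with the edges of the sets containing \(e\) or \(f\). The remaining sets \(U\) meet \(e\) and \(f\) in at most one point each. Their candidate images have sizes \(u+1\) or \(u+2\).

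Next we split by \(|e\cap f|\in\{0,1\}\) and describe the components explicitly. The key observation is that from an image \(W\) one recovers \(U\) by deleting a subset of \(e\) or of \(f\) of the right size. So \(W\) has at most four incident edges, and each neighbour of \(W\) is obtained by a prescribed exchange of points of \(e\cup f\). Put \(X=U\setminus(e\cup f)\). Then every set in a component has the same core \(X\), because edges only change points inside \(e\cup f\). Consequently each component lives inside \(\{X\cup S:S\subseteq e\cup f\}\), a configuration on at most four points that is independent of \(X\).

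This reduces the lemma to a finite check. For each possible pattern of intersections \(U\cap(e\cup f)\), we list the edges \(\{(U\cap(e\cup f))\cup e,\ (U\cap(e\cup f))\cup f\}\) on the subsets of \(e\cup f\), and verify that every component of this small graph has no more edges than vertices. The sets removed in the first step account for the excess elsewhere. Under the proposed reduction, the surviving traces are those containing neither \(e\) nor \(f\); when \(e\cap f=\varnothing\), this means at most one point of each of \(e\) and \(f\), so there are nine traces and eight images, and the pigeonhole forces a shortfall. The remedy we would try first is the following. Traces containing \(f\) but not \(e\) need not be deleted: such a \(U\) has the single candidate \(U\cup e\), which has size greater than \(u\), and it should be kept as a pendant edge. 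We would then handle the four-point case \(e\cap f=\varnothing\) and the three-point case \(|e\cap f|=1\) separately. In each case we verify, using the equal-size hypothesis, that the edges realised by a family of equal-size traces form a forest plus at most one cycle per component; equivalently, we exhibit an explicit rule choosing \(U\cup e\) or \(U\cup f\) by the trace.

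We expect this last verification to be the main obstacle. The naive count shows that Hall's condition is not automatic when traces of different sizes share a core. So the argument must use that all members of \(\mathcal U\) have one size, which makes traces with the same core have equal size. The decisive point is that this restriction leaves each component with at most one cycle.
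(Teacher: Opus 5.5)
Your setup is sound. It has four parts: the multigraph whose vertices are candidate images and whose edges are the members of $\mathcal U$; the pseudoforest criterion for choosing endpoints injectively; the collision-free self-assignment of sets containing $e$ or $f$; and the observation that a component has a fixed core $X$, so equal size forces a fixed trace size $s=|U|-|X|$ inside it. But the proposal stops exactly where the proof must happen. You never show that the components are pseudoforests, and the surrounding remarks are wrong:
\begin{itemize}
\item The ``nine traces'' shortfall mixes trace sizes $0,1,2$ on one core. The hypothesis forbids this, so it is no obstacle at all.
\item The proposed remedy rests on a false claim. A set $U\supseteq f$ has $U\cup f=U$ as a candidate, and your own first step shows that sending it there never collides. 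Forcing it onto $U\cup e$ as a pendant edge only adds constraints.
\item Your bound of four incident edges per image is too weak to conclude anything.
\end{itemize}

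The missing idea is a degree bound that uses the size hypothesis together with the self-assignment. Let $r$ be the common size.
\begin{itemize}
\item An image $W$ with $|W|=r+2$ can only come from $W\setminus e$ or $W\setminus f$.
\item An image with $|W|=r+1$ comes only from some $W\setminus\{v\}$ with $v\in e\subseteq W$ or $v\in f\subseteq W$. If $v$ misses one of $e,f$ lying in $W$, then $W\setminus\{v\}$ contains that pair and was already sent to itself. So $v$ lies in every one of $e,f$ contained in $W$, leaving at most two predecessors.
\end{itemize}
Hence, after your first step, the multigraph has maximum degree two. A loop arises only when $W\setminus U$ is the common point of $e$ and $f$, and it then forms a component by itself. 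The components are therefore paths and cycles, and your criterion finishes the proof. This is the paper's argument, phrased there as Hall's condition for a bipartite graph with left degrees two and right degrees at most two, after discarding the degenerate isolated edges.

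Your finite check would also close the argument:
\begin{itemize}
\item For disjoint $e=\{a_1,a_2\}$, $f=\{b_1,b_2\}$, the trace sizes $s=0,1,2$ give an edge, two paths of length two, and a $4$-cycle on the four $3$-subsets of $e\cup f$. Sizes $s\ge3$ give nothing.
\item For $e=\{a,c\}$, $f=\{b,c\}$, they give an edge, a triangle, and a loop at $\{a,b,c\}$.
\end{itemize}
As written, however, that verification is asserted rather than done.
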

\begin{proof}
Write $r$ for the common size. Send a set containing either edge
to itself. For the remaining sets form a bipartite graph with
possible images $U\cup e,U\cup f$ on the right.

Every right vertex has degree at most two. An image of size $r+2$
has only the predecessors obtained by removing $e$ or $f$. An
image $W$ of size $r+1$ has a predecessor $W\setminus\{v\}$ only
when $v$ belongs to every one of $e,f$ contained in $W$; otherwise
that predecessor was assigned to itself. There are again at most
two choices. A left vertex has degree one only when its two images
coincide. The added point is then the common endpoint of the two
edges, and the right vertex also has degree one.

Remove these isolated edges. Every remaining left vertex has degree
two, and every right vertex has degree at most two. Counting edges
proves Hall's condition. A matching gives the injection, with images
larger than $r$, disjoint from those chosen at the start.
\end{proof}

\begin{corollary}[Raising the cutoff]\label{cor:ind-bridge}
Let $n\ge1$ be an integer and let $\mathcal A$ be a triple family
with support of size $m\ge n+3$. Suppose $p$ belongs to at least two
generators. If $\mathcal B$
consists of the generators avoiding $p$, then
\begin{equation}\label{eq:ind-bridge}
 M_n(\mathcal A)\ge M_{n-1}(\mathcal B)+1.
\end{equation}
The last term may be replaced by the number of unions through $p$
of size greater than $n+2$, which is at least
$\lceil(m-n-2)/3\rceil$.
\end{corollary}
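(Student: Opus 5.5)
We split the large members of \(\langle\mathcal A\rangle\) according to whether they contain \(p\). Those avoiding \(p\) are large unions of \(\mathcal B\), so they contribute \(M_n(\mathcal B)\). Those containing \(p\) will be produced by an injection out of \(\langle\mathcal B\rangle_{\ge n-1}\).

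Since \(p\) lies in at least two generators, the link \(\mathcal H\) contains two distinct edges \(e,f\). Let \(A_e=e\cup\{p\}\) and \(A_f=f\cup\{p\}\) be the corresponding generators.

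\emph{The injection.} For each level \(r\ge n-1\), apply Lemma~\ref{lem:ind-two-edge} to \(\mathcal U=\langle\mathcal B\rangle_r\). It gives an injection \(U\mapsto \phi(U)\in\{U\cup e,U\cup f\}\). Then \(\phi(U)\cup\{p\}\) is a union of \(U\) and \(A_e\) or \(A_f\), so it lies in \(\langle\mathcal A\rangle\). It contains \(p\) and has size at least \(r+1\ge n\).

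\emph{Distinctness across levels.} Within one level, distinctness comes from the lemma. Images from different levels could collide, since \(\phi(U)\) may have size \(r\), \(r+1\) or \(r+2\). I would avoid this by using the natural graded decomposition of \(M_{n-1}(\mathcal B)\) as \(M_n(\mathcal B)+|\langle\mathcal B\rangle_{n-1}|\). The members of size at least \(n\) are counted directly as unions avoiding \(p\). Only the level \(n-1\) is mapped through the injection, giving unions through \(p\) of size \(n\), \(n+1\) or \(n+2\). This yields
\[
M_n(\mathcal A)\ge M_n(\mathcal B)+|\langle\mathcal B\rangle_{n-1}|+X,
\]
where \(X\) counts the unions through \(p\) of size greater than \(n+2\). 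These unions are disjoint from both earlier contributions: they contain \(p\), and they are larger than every image of the injection. The sum of the first two terms is \(M_{n-1}(\mathcal B)\), which gives \eqref{eq:ind-bridge} together with the stated refinement once \(X\ge1\).

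\emph{Counting \(X\).} The support of \(\mathcal A\) is covered by generators, so choose an inclusion-minimal cover that includes a generator through \(p\). Each member of this cover has a private point. The cover has at least \(\lceil m/3\rceil\) members. Omitting \(j\) cover members other than the one through \(p\) leaves a union through \(p\) of size at least \(m-3j\). These unions are distinct because the private points differ. Such a union has size greater than \(n+2\) as long as \(3j<m-n-2\). Taking nested omissions \(j=0,1,\dots\), the unions obtained are distinct and number at least \(\lceil(m-n-2)/3\rceil\). Since \(m\ge n+3\), this count is at least \(1\).

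The main obstacle I expect is the cross-level disjointness step. One must confirm that the level-\((n-1)\) images of sizes \(n\) to \(n+2\) never coincide with the nested-omission unions of size greater than \(n+2\). The size separation above is what guarantees this.
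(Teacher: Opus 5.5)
Your proof is correct and follows the paper's argument. You keep the unions of $\mathcal B$ of size at least $n$, pass only the level $n-1$ through the two-edge injection and restore $p$ (giving unions of sizes $n$ to $n+2$), and then count unions through $p$ of size greater than $n+2$; the only cosmetic difference is that you build that chain downward by deleting members of a minimal cover, while the paper builds it upward by adjoining generators to one through $p$, and both have steps of at most three points and so give $\lceil (m-n-2)/3\rceil$.
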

\begin{proof}
Keep the unions of $\mathcal B$ of size at least $n$. Apply the
lemma to its $(n-1)$-unions, using two link edges, and restore $p$.
These images contain $p$ and have sizes between $n$ and $n+2$.
The full support supplies one further union. Starting with a
generator through $p$ and adjoining generators until the support
is covered gives the stronger assertion: each step adds at most
three points.
\end{proof}

The same proof works for $k$-sets whenever the link admits the
required injection. Its images have size at most $n+k-1$, so every
union through $p$ of size at least $n+k$ is additional.
Lemma~\ref{lem:ind-disjoint} gives one sufficient condition for
general $k$. For triples, two distinct link members always suffice.

\subsection{How many generators remain}

The following estimate does not depend on the strip or on the
special form of a triple link.

\begin{lemma}[Two deletions]\label{lem:ind-retention}
Let $\mathcal A$ have $N$ distinct $k$-sets and minimum degree
$\delta$. Delete a point of minimum degree, leaving a family with support
of size $w$, and then delete a point of minimum degree in that family.
Let $a,b$ be real numbers satisfying
$\delta\le w+b$, $w\ge a>k$, $b\ge0$, and $N\ge a+b$.
The number $q$ of surviving generators satisfies
\begin{equation}\label{eq:ind-retention}
 q\ge\left\lceil(N-a-b)\left(1-\frac{k}{a}\right)\right\rceil.
\end{equation}
\end{lemma}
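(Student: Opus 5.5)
The plan is to track the two deletions separately and then optimise over the unknown intermediate support size $w$. Let $\delta$ be the degree of the first deleted point, and write $N_1=N-\delta$ for the number of generators that survive it. The remaining family has support of size exactly $w$. Every surviving generator still has $k$ points in that support, so its degree sum is $kN_1$. The second deleted point has minimum degree there, so its degree $\delta'$ satisfies $\delta'\le kN_1/w$. Hence
\[
 q=N_1-\delta'\ge N_1\Bigl(1-\frac kw\Bigr),
\]
and the factor $1-k/w$ is positive because $w\ge a>k$. Since $q$ is an integer, it suffices to prove the real inequality $N_1(1-k/w)\ge(N-a-b)(1-k/a)$; the ceiling then comes for free.

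I would split according to the size of $\delta$.

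If $\delta\le a+b$, then $N_1\ge N-a-b\ge0$. Also $w\ge a$ gives $1-k/w\ge1-k/a$, and multiplying the two inequalities between nonnegative quantities finishes this case.

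If $\delta>a+b$, use the hypothesis $\delta\le w+b$. It gives $N_1\ge N-b-w$ and also forces $w>a$. The product is then bounded below by
\[
 g(w)=(N-b-w)\Bigl(1-\frac kw\Bigr)=N-b+k-w-\frac{k(N-b)}{w}.
\]
Since $N-b\ge a>0$, we have $g''(w)=-2k(N-b)/w^3<0$, so $g$ is concave. Its minimum over any interval is therefore attained at an endpoint. At the left endpoint $w=a$ the value is exactly $(N-a-b)(1-k/a)$, the target.

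The main obstacle is the right endpoint. When $w$ is close to $N-b$ the function $g$ becomes small or negative, so the hypothesis $\delta\le w+b$ alone cannot finish the argument. The right endpoint also has to be chosen: either the largest $w$ compatible with $\delta>a+b$ and $N_1\ge0$, or the point where $g$ falls back to the target. I would first try a second, independent bound on $\delta$. The first deleted point had minimum degree in the original family, whose support has at least $w+1$ points, so $\delta\le kN/(w+1)$. This gives $N_1\ge N(1-k/(w+1))$, which is large precisely when $w$ is large. The working lower bound for $N_1$ is then the maximum of $N-b-w$ and $N(1-k/(w+1))$.

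Multiplying this maximum by $1-k/w$ and comparing with the target should cover all $w>a$. The first bound handles the range where $w$ is moderate, by concavity between $a$ and the crossover point. The second bound handles larger $w$; there I would check that $N(1-k/(w+1))(1-k/w)\ge N(1-k/a)^2$, and that this dominates $(N-a-b)(1-k/a)$ because $1-k/a\ge\ldots$. This last comparison is where the proof could still fail. If it does, I would exploit that both admissible lower bounds for $N_1$ move monotonically in $w$ in opposite directions, and evaluate only at their crossover.
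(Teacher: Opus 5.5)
You have found the right extra ingredient. The first deleted point has minimum degree in the original family, whose support has $m\ge w+1$ points, so $m\delta\le kN$; the paper uses exactly this, and your averaging bound and your case $\delta\le a+b$ agree with the paper. The proposal nevertheless stops short of a proof in the case $\delta>a+b$.

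\textbf{The comparison you propose for large $w$ fails.} Dividing by $1-k/a>0$, the inequality $N(1-k/a)^2\ge(N-a-b)(1-k/a)$ is equivalent to $kN\le a(a+b)$. This is false once $N$ is large: for $k=3$, $a=8$, $b=0$, $N=100$ the two sides are about $39.1$ and $57.5$. Concavity does not rescue it. Let $w_c$ be the crossover, defined by $(w_c+1)(w_c+b)=kN$. On $[a,w_c]$, concavity of $g$ only reduces the claim to the comparison $g(w_c)\ge g(a)$. Beyond the crossover, $N(1-k/(w+1))(1-k/w)$ is increasing in $w$ and equals $g(w_c)$ at $w_c$, so the same comparison is needed there too. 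You never make this comparison, and it is the whole content of the case.

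\textbf{The missing step is short.} For $g(x)=(N-b-x)(1-k/x)$ one has
\[
 g(x)-g(a)=(x-a)\Bigl(\frac{k(N-b)}{ax}-1\Bigr).
\]
Suppose first that $w_c>a$. Since $w_c>a>k$, the crossover gives
\[
 k(N-b)=w_c^2+w_c+b(w_c+1-k)>aw_c,
\]
so $g(w_c)\ge g(a)$. Suppose instead that $w_c\le a$. Then $kN\le(a+1)(a+b)$, and since $w>a$ here,
\[
 \delta\le kN/(w+1)<kN/(a+1)\le a+b,
\]
so this case does not arise.

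\textbf{The paper's route avoids the crossover.} Instead of weakening $N-\delta$ to $N-b-w$, it keeps $N-\delta$ exact and weakens $w$ to $u=\delta-b\le w$. This gives $q\ge(N-\delta)(1-k/w)\ge g(u)$. Then $kN\ge m\delta\ge u(u+b)$ yields $k(N-b)\ge u^2+b(u-k)\ge au$, and the identity above finishes the proof. Your fallback of evaluating at the crossover is the right idea, but as written it is a plan rather than a proof.
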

\begin{proof}
Averaging degrees after the first deletion gives
$q\ge(N-\delta)(1-k/w)$. If $\delta\le a+b$, use $w\ge a$
to obtain the claim. For larger $\delta$, the hypothesis forces a
larger surviving support, which improves the averaging bound.
Put $u=\delta-b>a$, so that $w\ge u$.
If $m$ was the original support size, minimum degree gives
$kN\ge m\delta\ge u(u+b)$. Since $u>a>k$,
this implies $k(N-b)\ge au$. Hence
\[
 (N-b-u)(1-k/u)-(N-b-a)(1-k/a)
 =(u-a)\left(\frac{k(N-b)}{au}-1\right)\ge0.
\]
Replace $w$ by $u$ in the averaging bound and take the ceiling.
\end{proof}

\subsection{Traces supply the additional unions}

The second deletion separates two collections of unions. The
surviving generators give those avoiding the deleted point.
To find unions containing it, adjoin one fixed generator. Their
distinct parts outside that generator are what we must count.

\begin{lemma}[Trace capacity]\label{lem:ind-trace}
Let $\mathcal C$ be a family of $k$-sets avoiding a point $x$,
and let $E$ be a $k$-set containing $x$. Let $\mathcal D$ be
the union-closure of the traces $C\setminus E$, $C\in\mathcal C$,
including the empty set, and put
$f_j=|\mathcal D_j|$, $Q_h=|\mathcal D_{\ge h}|$ for integer indices.
In particular, $f_0=1$ and $Q_h=|\mathcal D|$ for $h\le0$. Then
\begin{equation}\label{eq:ind-capacity}
 |\mathcal C|\le\sum_{j=1}^k\binom{k-1}{k-j}f_j.
\end{equation}
If $\mathcal B$ contains $\mathcal C$ and $E$, then
\begin{equation}\label{eq:ind-trace-unions}
 M_h(\mathcal B)\ge M_h(\mathcal C)+Q_{h-k}.
\end{equation}
In particular, if $v$ is a nonnegative integer and
$f_j\le\binom vj$ for $1\le j\le k$, then
\begin{equation}\label{eq:ind-vandermonde}
 |\mathcal C|\le\binom{v+k-1}{k}.
\end{equation}
\end{lemma}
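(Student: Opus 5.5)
The three assertions are nearly independent, and I would prove them in the order stated. Each is a direct counting argument. The only idea needed is that a $k$-set $C$ avoiding $x$ is determined by two pieces of data: its trace $C\setminus E$, and its part $C\cap E$ inside $E\setminus\{x\}$.

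\emph{The capacity bound \eqref{eq:ind-capacity}.} Fix $C\in\mathcal C$ and let $j=|C\setminus E|$.
\begin{itemize}
\item Since $x\notin C$, the set $C\cap E$ is a $(k-j)$-subset of $E\setminus\{x\}$, which has $k-1$ points.
\item In particular $j=0$ is impossible, because a $k$-set cannot lie in a $(k-1)$-set. Hence $1\le j\le k$.
\item The trace $C\setminus E$ is itself a member of $\mathcal D$, namely the union of the one-member subfamily $\{C\}$. So it is one of the $f_j$ members of $\mathcal D_j$.
\item The map $C\mapsto(C\setminus E,\;C\cap E)$ is injective, since $C$ is the union of its two parts.
\end{itemize}
For each $j$ there are therefore at most $f_j\binom{k-1}{k-j}$ possible images. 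Summing over $1\le j\le k$ gives \eqref{eq:ind-capacity}.

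\emph{The union bound \eqref{eq:ind-trace-unions}.} I would exhibit two disjoint collections of large members of $\langle\mathcal B\rangle$.
\begin{itemize}
\item The first collection is the unions of $\mathcal C$ of size at least $h$. There are $M_h(\mathcal C)$ of them, and all avoid $x$.
\item For the second, take any $S\in\mathcal D$ with $|S|\ge h-k$. Write $S=\bigcup_{C\in\mathcal C'}(C\setminus E)$ for some subfamily $\mathcal C'\subseteq\mathcal C$, with $\mathcal C'=\varnothing$ when $S=\varnothing$.
\item Form $W_S=E\cup\bigcup\mathcal C'$. This is a union of members of $\mathcal B$, because $E$ and $\mathcal C$ lie in $\mathcal B$.
\item We have $W_S\setminus E=S$, so $|W_S|=k+|S|\ge h$. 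The map $S\mapsto W_S$ is injective because $S$ is recovered as $W_S\setminus E$.
\item Each $W_S$ contains $x\in E$, so it differs from every union in the first collection.
\end{itemize}
This gives $Q_{h-k}$ further members, and adding the two counts proves the inequality. The empty trace is included in $\mathcal D$ exactly so that $E$ itself is counted; this matches the convention $f_0=1$.

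\emph{The Vandermonde bound \eqref{eq:ind-vandermonde}.} Substitute $f_j\le\binom vj$ into \eqref{eq:ind-capacity}. This gives
\[
|\mathcal C|\le\sum_{j=1}^{k}\binom{k-1}{k-j}\binom vj .
\]
The omitted $j=0$ term is $\binom{k-1}{k}=0$, so the sum may start at $j=0$. Vandermonde's identity then evaluates it as $\binom{v+k-1}{k}$.

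I do not expect a real obstacle. The only points needing care are the two just noted: ruling out traces of size $0$ in the first count, and making sure the unions $E\cup\bigcup\mathcal C'$ are distinct from each other and from the unions avoiding $x$.
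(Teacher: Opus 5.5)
Your proposal is correct and takes essentially the same approach as the paper. The capacity bound counts, for each trace, the possible parts inside $E\setminus\{x\}$, after observing that empty traces cannot occur. The union bound uses the injective map $D\mapsto E\cup D$, whose images all contain $x$, and the last bound is Vandermonde's identity with the vanishing $j=0$ term.
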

\begin{proof}
A trace of size $j$ has at most $\binom{k-1}{k-j}$ preimages:
the remaining points lie in $E\setminus\{x\}$. The traces are
nonempty, since a $k$-set avoiding $x$ cannot be contained in $E$.
This proves \eqref{eq:ind-capacity}. For each trace union $D$,
the set $E\cup D$ is generated, has size $k+|D|$, and contains
$x$. Distinct traces give distinct unions, disjoint from those
generated by $\mathcal C$. This proves \eqref{eq:ind-trace-unions}.
Finally, Vandermonde's identity gives
\[
 \sum_{j=1}^k\binom{k-1}{k-j}\binom vj
 =\binom{v+k-1}{k},
\]
the omitted $j=0$ term being zero.
\end{proof}

Here is how lower-cutoff theorems enter. The $j$-th level of
$\mathcal D$ generates a subfamily of $\mathcal D$. If a known
bound says that $c_j+1$ distinct $j$-sets force more than $Q$
unions of size at least $h$, where $c_j,Q$ are nonnegative integers,
then $Q_h\le Q$ implies $f_j\le c_j$.
Substitution in \eqref{eq:ind-capacity} bounds the number of
surviving generators. In particular, the bounds
$f_j\le\binom{n-k}{j}$ give
$|\mathcal C|\le\binom{n-1}{k}$. Retaining more generators forces
the additional trace unions needed for induction.

All these trace and retention estimates are uniform in $k$ and $t$.
For triples, the two-edge injection applies to every link with at
least two members. For larger generators, a link need not contain the
disjoint members required by Lemma~\ref{lem:ind-disjoint}; a general
lifting argument is still needed. Section~\ref{sec:second-small}
combines the two-edge injection with the trace estimate to prove the
triple case, after establishing the small cutoffs by elementary counts.

\section{The second strip for small generators}\label{sec:second-small}

We now prove Theorem~\ref{thm:small-k}. For triples, the first
deletion lowers the cutoff. A second deletion leaves enough triples
to force further unions through their traces. We induct on the cutoff, and at each cutoff on the actual support.
The minimum-support theorem supplies the initial support. The first-strip
theorem and elementary counts supply the cutoffs through six. Thereafter the two deletions
give a common inductive step. The final subsection treats four-sets
and five-sets by the support reduction of Section~\ref{sec:larger-supports}.

For the second-strip target, write
\[
 N=\binom{n+1}{k}+R,\qquad
 T_{n,k}(R)=U_k(N,n),\qquad
 T_{\max}(n)=S_2(n+2)=1+\binom{n+3}{2}.
\]
Here $n\ge k$ and $1\le R\le\binom{n+1}{k-1}$ are integers,
and $T_{n,k}(R)\le T_{\max}(n)$.
The minimum-support case is Theorem~\ref{thm:second-minimum}.

We first record a degree observation used throughout the section. The
degree of a point is the number of generators containing it; its
\emph{link} consists of those generators with the point removed.
For a generator family $\mathcal A$, write $m$ for its support-size
and $\delta$ for its minimum degree.
On a support of size $m\ge k+3$, put
\begin{equation}\label{eq:second-degree-threshold}
 \lambda(m,k)=\binom{m-2}{k-2}+\binom{m-4}{k-3}.
\end{equation}
If every degree exceeds $\lambda(m,k)$, then at least
$1+\binom m2$ unions have size at least $m-2$. Indeed, a point $x$
outside a deleted set $D$ is uncovered precisely when $D$ meets every
member of its link. A singleton can meet every link member only if
the degree is at most $\binom{m-2}{k-2}$. Two distinct pairs can both
meet every member only if the degree is at most $\lambda(m,k)$.
For intersecting pairs, a link member must contain their common
point or both other points; disjoint pairs give a smaller bound.
Consequently all sets of size $m-1$ are generated, and each point
witnesses the failure of at most one set of size $m-2$. Including
the full support, the number of generated sets on these three levels
is at least $1+m+\binom m2-m$, as asserted.
In particular, on support at least $n+3$ it gives $T_{\max}(n)$ large
unions. For triples, $\lambda(m,3)=m-1$.

\subsection{Triples: the model and the degree reduction}

For triples write $T_n(R)=T_{n,3}(R)$ and $H(v)=S_2(v)$.
The model contains all triples on $[n+1]$ and the triples
$\{n+2\}\cup e$, where $e$ runs through the first $R$ pairs
of $[n+1]$ in lexicographic order. Counting the sets containing
a link edge gives
\begin{equation}\label{ss3-eq:target}
\begin{array}{c|cccccc}
 R&1&2&3\le R\le n&n+1&n+2\le R\le2n-1&R\ge2n\\ \hline
 H(n+2)-T_n(R)&2n+1&n+2&n+1&2&1&0.
\end{array}
\end{equation}
Indeed, a set through $n+2$ fails to be generated exactly when its
remaining points contain no link edge. Thus the subtracted quantities
count independent sets of sizes at least $n-1$ in the initial lexicographic
graph. In particular,
\begin{equation}\label{ss3-eq:lower-steps}
 T_n(1)=\frac{n^2+n+6}{2},\quad T_n(2)=H(n+1),\quad
 T_n(R)=H(n+1)+1\quad(3\le R\le n).
\end{equation}
The count never exceeds $H(n+2)$ and reaches that value at $R=2n$.

Besides the minimum-support theorem, we use the first-strip theorem
on arbitrary support. Its small values for triples are
\begin{equation}\label{ss3-eq:first-small}
\begin{array}{c|ccc}
 |\mathcal A|&5&6,7&8,9,10\\ \hline
 M_{4}(\mathcal A)\text{ is at least}&4&5&6.
\end{array}
\end{equation}
Taking a subfamily allows us to use any of these bounds when more
generators are present. We also use the theorem for pairs of
Leck--Roberts--Simpson \cite[Theorem~3]{LeckRobertsSimpson2012}. Its threshold-weight specialization says
that, for integers $v\ge1$ and $0\le b<v$, a family of
$\binom v2+b$ pairs has, at an integer cutoff $h\ge2$, at least
\begin{equation}\label{ss3-eq:pairs}
 \sum_{j\ge h}\left[\binom{v+1}{j}-\binom{v-b}{j-1}\right]
\end{equation}
unions of size at least $h$. This is the count for a complete graph
on $v$ points and $b$ edges from one new point. 

For $n=3$, every union is a triple or has size at least four. Hence
the first-strip theorem at cutoff four proves Theorem~\ref{thm:small-k}(i)
immediately. We assume $n\ge4$ below.

\begin{lemma}\label{ss3-lem:degree}
Let $\mathcal A$ have $N$ triples and support of size $m\ge7$, and let $p$
have minimum degree $\delta\ge2$. Let $\mathcal B$ be the generators avoiding
$p$, with support of size $w$.
If $\delta\le m-1$, then
\begin{equation}\label{ss3-eq:degree}
 w\in\{m-1,m-2\},\qquad \delta\le w,\qquad
 \delta\le d(N):=\left\lfloor\frac{\sqrt{12N+1}-1}{2}\right\rfloor.
\end{equation}
If instead $\delta\ge m$, then
\begin{equation}\label{ss3-eq:high-degree}
 M_{m-2}(\mathcal A)\ge1+\binom m2.
\end{equation}
\end{lemma}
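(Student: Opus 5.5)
The proof will treat the two alternatives separately, beginning with the high-degree case because it is a direct instance of the degree observation recorded at the start of this section. For triples, \(\lambda(m,3)=\binom{m-2}{1}+\binom{m-4}{0}=m-1\). If \(\delta\ge m\), every degree exceeds \(\lambda(m,3)\), and \(m\ge7\ge k+3\). That observation then yields at least \(1+\binom m2\) unions of size at least \(m-2\), which is \eqref{ss3-eq:high-degree}. No further work should be needed here beyond checking that \(m\ge k+3\) holds.

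Now suppose \(2\le\delta\le m-1\). The first task is the support of \(\mathcal B\). Deleting \(p\) removes exactly the generators through \(p\). A second point \(y\) disappears from the support precisely when every generator through \(y\) also contains \(p\). In that case \(\deg y\le\deg p=\delta\), and minimality of \(\delta\) gives equality, so \(y\) lies in every generator through \(p\). As in the reduction argument of Section~\ref{sec:larger-supports}, all disappearing points other than \(p\) therefore lie in the common intersection of the link of \(p\). The link consists of \(\delta\ge2\) distinct pairs, and two distinct pairs share at most one point. Hence at most one extra point disappears, which gives \(w\in\{m-1,m-2\}\).

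Next comes \(\delta\le w\). The link is a graph with \(\delta\) distinct edges, and every edge lies in \(\operatorname{supp}(\mathcal B)\cup\{\text{the disappearing point}\}\). If \(w=m-1\), then \(\delta\le m-1=w\) by hypothesis. If \(w=m-2\), all edges of the link pass through the single disappearing point \(y\), and their other endpoints are distinct points of \(V\setminus\{p,y\}\). That set has \(m-2=w\) points, so \(\delta\le w\).

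Finally, \(\delta\le d(N)\) will come from double counting incidences. Every point has degree at least \(\delta\), so \(3N\ge m\delta\). Since \(\delta\le m-1\), we have \(m\ge\delta+1\), and so \(3N\ge\delta(\delta+1)\). This gives \(\delta^2+\delta-3N\le0\), i.e.\ \(\delta\le(\sqrt{12N+1}-1)/2\), and integrality supplies the floor.

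The only delicate step is the support analysis, namely that at most one additional point vanishes and that the edges then form a star. I will handle it through the equal-degree argument above, which uses the hypothesis \(\delta\ge2\) precisely to ensure that the link has two distinct edges.
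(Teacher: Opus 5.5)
Your proposal is correct and follows the paper's proof essentially step for step. The disappearing points are handled by the same equal-incidence argument: two distinct link pairs share at most one point, which is the paper's ``three such points would force degree one.'' The case \(w=m-2\) gives a star and hence \(\delta\le m-2\), the bound \(\delta\le d(N)\) comes from \(\delta(\delta+1)\le m\delta\le3N\), and the high-degree case is the degree observation with \(\lambda(m,3)=m-1\).
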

\begin{proof}
Any other point disappearing when $p$ is deleted has its incident
family contained in the incident family of $p$. Minimum degree forces
equality. Thus all disappearing points occur together. Three such
points would force degree one. If two points disappear, every removed
triple consists of that pair and one other point, so $\delta\le m-2$.
This proves the support assertions. Also
$\delta(\delta+1)\le m\delta\le3N$, proving the bound on $\delta$.

The last assertion is the degree observation above, since $\lambda(m,3)=m-1$.
\end{proof}

In particular, if $m\ge n+3$, the high-degree bound is at least
$H(n+2)$. Such a family already satisfies the theorem.

\begin{lemma}\label{ss3-lem:one}
In proving Theorem~\ref{thm:small-k}(i) at a fixed cutoff $n\ge4$ by induction
on support, it is enough to consider minimum degree at least two.
\end{lemma}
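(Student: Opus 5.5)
We argue by contradiction with a support-minimal counterexample. At the fixed cutoff $n\ge4$, suppose Theorem~\ref{thm:small-k}(i) holds for all triple families in the second strip whose support is smaller than $m$. Let $\mathcal A$ have $N=\binom{n+1}{3}+R$ triples and support of size $m$. Theorem~\ref{thm:second-minimum} settles $m=n+2$, so assume $m\ge n+3$. Every support point has degree at least one, so the only case to remove is a point $p$ of degree exactly one. Let $G$ be the unique generator through $p$, and put $\mathcal B=\mathcal A\setminus\{G\}$. Then $\mathcal B$ has $N-1$ triples and strictly smaller support, since it misses $p$.

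\emph{Step 1: the case $R\ge2$.} Here $\mathcal B$ still lies in the second strip, with excess $R-1$. The induction hypothesis gives $M_n(\mathcal B)\ge T_n(R-1)$, and these unions avoid $p$. Every set $U\cup G$ with $U\in\langle\mathcal B\rangle_{\ge n}$ is a further large member of $\langle\mathcal A\rangle$, and it contains $p$. Two sets $U,U'$ avoiding $p$ have the same image only if they differ inside $G\setminus\{p\}$, so each fibre has at most four members. Hence
\[
 M_n(\mathcal A)\ge T_n(R-1)+\tfrac14 T_n(R-1).
\]
By the table \eqref{ss3-eq:target}, the increment $T_n(R)-T_n(R-1)$ is at most $n-1$. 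Also $T_n(R-1)\ge T_n(1)=(n^2+n+6)/2$, and
\[
 \frac{n^2+n+6}{8}\ge n-1
 \iff n^2-7n+14\ge0,
\]
which holds for all $n$ because the discriminant is negative. This finishes the case.

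\emph{Step 2: the case $R=1$.} Now $\mathcal B$ has exactly $\binom{n+1}{3}$ triples, which is the top of the first strip, so induction in the second strip is unavailable. We need $T_n(1)=(n^2+n+6)/2$ large unions directly. The plan is to use the large support. Since $|\operatorname{supp}(\mathcal B)|\ge m-1\ge n+2>\sigma_3\bigl(\binom{n+1}3\bigr)=n+1$, the family $\mathcal B$ is not the complete layer on $n+1$ points.

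First try the degree observation with $\lambda(m,3)=m-1$. If every point other than $p$ has large degree in $\mathcal B$, then all omissions from the support of at most two points avoiding $p$ are generated by $\mathcal B$. Adjoining $G$ to these sets and counting gives about $\binom{m-1}{2}\ge\binom{n+2}{2}>T_n(1)$ large unions.

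Otherwise some point $x$ has small degree. Then remove $x$ as well and apply Lemma~\ref{ss3-lem:degree} together with the first-strip theorem (Theorem~\ref{thm:first-strip}) at cutoff $n-1$ to the remaining family. Lift the resulting unions by adjoining $G$, and separately by adjoining the generators through $x$. The bridging estimate of Corollary~\ref{cor:ind-bridge}, applied at $x$ (which has degree at least two unless it too has degree one), supplies the additional unions needed. If every small-degree point has degree one, iterate: a family in which many points have degree one has a large minimal generator cover, and Lemma~\ref{lem:large-support-cover} with $J=2$ gives enough unions, because the support is large.

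The main obstacle is Step 2. There the excess drops out of the strip, so the simple fibre count is far too weak: it compares against $U_3\bigl(\binom{n+1}3,n\bigr)=n+3$ rather than $T_n(1)$. I expect the proof to need a careful split by the degree structure of the points other than $p$. If the degree/cover dichotomy leaves a gap for small $n$, I would try the elementary counts, such as \eqref{ss3-eq:first-small}, to close it.
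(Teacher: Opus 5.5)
\textbf{Step~1 is correct; Step~2 has a genuine gap.} For $R\ge2$ your argument works. The family $\mathcal B$ stays in the second strip on a smaller support. The map $U\mapsto U\cup G$ has fibres of size at most four, because two preimages can differ only inside $G\setminus\{p\}$. Finally, $(n^2+n+6)/8\ge n-1$ covers the largest jump in \eqref{ss3-eq:target}.

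The gap is Step~2, the case $R=1$. You leave it as a plan, and its proposed ingredients are quantitatively far too weak. The target $T_n(1)=(n^2+n+6)/2$ is quadratic in $n$. Against it:
\begin{itemize}
\item The first-strip theorem at cutoff $n-1$ never gives more than $U_3\bigl(\binom n3,n-1\bigr)=n+1$ unions.
\item Lemma~\ref{lem:large-support-cover} with $J=2$ needs support at least $n+6$ and gives only $S_2(\lceil n/3\rceil+2)$, roughly $n^2/18$.
\item Corollary~\ref{cor:ind-bridge} adds a single union.
\end{itemize}
Also, $\operatorname{supp}(\mathcal B)$ may have size $m-2$ or $m-3$, since $a$ or $b$ can have degree one as well. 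At $R=1$, deleting any generator leaves the strip, and nothing in your sketch recovers the lost quadratic count.

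\textbf{The missing idea: move the generator rather than delete it.} Write $G=\{p,a,b\}$.
\begin{itemize}
\item Suppose some support point $z\notin\{p,a,b\}$ has $\{z,a,b\}\notin\mathcal A$. Identify $p$ with $z$. Since $p$ lies only in $G$, the image consists of $N$ distinct triples on $m-1\ge n+2$ points. Identification commutes with unions and never increases sizes. So every large member of the image closure has a large preimage, and the image has at most $M_n(\mathcal A)$ large unions. The support induction at the same $N$ then gives $M_n(\mathcal A)\ge T_n(R)$ for every $R$, including $R=1$.
\item If no such $z$ exists, $\mathcal A$ contains $\{a,b,z\}$ for all $m-2\ge n+1$ points $z\notin\{a,b\}$. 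Take $n+1$ of these triples. Their unions of at least $n-2$ members have size at least $n$, and there are $1+(n+1)+\binom{n+1}2+\binom{n+1}3$ of them. This is at least $H(n+2)\ge T_n(R)$, with difference $\binom{n+1}3-(n+2)\ge0$ for $n\ge4$.
\end{itemize}
This is the paper's argument. It handles both of your cases uniformly, so your Step~1, though valid, becomes unnecessary.
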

\begin{proof}
Suppose $p$ occurs only in $\{p,a,b\}$, and let $m\ge n+3$ be the
support size. If $\{z,a,b\}\notin\mathcal A$ for some support point
$z\notin\{p,a,b\}$, identify $p$ with $z$. All image generators are
distinct triples, the support decreases by one, and the number of large
unions cannot increase. The support induction applies.

Otherwise $\mathcal A$ contains every triple $\{a,b,z\}$ with
$z\notin\{a,b\}$. Choose $n+1$ such points $z$. Unions of at least
$n-2$ of these triples give
\[
 1+(n+1)+\binom{n+1}{2}+\binom{n+1}{3}\ge H(n+2)
\]
large unions. The difference is $\binom{n+1}{3}-(n+2)\ge0$ for
$n\ge4$.
\end{proof}

\subsection{Small families of triples}

We begin with four generators. This count will be used in the
support induction for the next lemma.

\begin{lemma}\label{ss3-lem:four}
Four distinct triples with support of size at least six have at least
eight nonempty unions.
\end{lemma}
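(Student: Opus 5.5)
The plan is to count the four triples themselves and then find at least four distinct unions of two or more generators. Every union of at least two distinct triples has size at least four, so none of these unions coincides with a generator. Let $V$ be the support, with $|V|\ge6$. Choose an inclusion-minimal subfamily $\mathcal C$ of the generators covering $V$, exactly as in the proof of Lemma~\ref{lem:large-support-cover}. Each member of $\mathcal C$ has a point lying in no other member of $\mathcal C$, so distinct subfamilies of $\mathcal C$ have distinct unions. Since $|V|\ge6$, the cover has at least two members. I split according to whether it has two members or at least three.

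\emph{Case $|\mathcal C|\ge3$.} Pick three members $C_1,C_2,C_3$ of $\mathcal C$, each with a private point relative to $\mathcal C$. The four subfamilies $\{C_1,C_2\},\{C_1,C_3\},\{C_2,C_3\},\{C_1,C_2,C_3\}$ have pairwise distinct unions. This holds because a private point of $C_i$ lies in exactly those unions whose subfamily contains $C_i$. Each of these unions has size at least four. Together with the four generators this gives eight nonempty unions.

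\emph{Case $|\mathcal C|=2$.} Then $\mathcal C=\{A,B\}$ with $A,B$ disjoint triples and $V=A\cup B$ of size exactly six. Call the other two generators $C$ and $D$. Since $C\subseteq V$ and $C\ne A,B$, the triple $C$ meets both $A$ and $B$. Hence $A\cup C$ and $B\cup C$ are proper subsets of $V$ of size four or five. They are distinct: $A\subseteq A\cup C$, while $A\subseteq B\cup C$ would force $B\cup C=V$. The same holds for $D$. I claim that $A\cup D$ and $B\cup D$ cannot both lie in $\{A\cup C,\,B\cup C\}$.
\begin{itemize}
\item[(a)] Containing $A$ and being proper, $A\cup D$ could equal only $A\cup C$.
\item[(b)] By the same reasoning, $B\cup D$ could equal only $B\cup C$.
\item[(c)] The two equalities together give $D\cap B=C\cap B$ and $D\cap A=C\cap A$, so $D=C$, which is a contradiction.
\end{itemize}
This yields three distinct proper unions of size at least four. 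Adding $V$ and the four triples again gives eight.

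I expect the only delicate point to be the two-member cover case. There one must check that the extra unions are proper subsets of $V$ and are genuinely distinct, and the argument above handles this by intersecting with $A$ and $B$ separately. The rest is the private-point counting already used in Lemma~\ref{lem:large-support-cover}.
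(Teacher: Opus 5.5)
Your proof is correct, and it differs from the paper's mainly in where the cases are cut. The paper splits on whether two of the triples are disjoint. With a disjoint pair $A,B$, it considers two subcases:
\begin{itemize}
\item some triple $C$ leaves $A\cup B$, and then $A\cup B$, $A\cup C$, $B\cup C$, $A\cup B\cup C$ are four distinct unions;
\item all four triples lie in $A\cup B$, and then it uses exactly your step (c): $A\cup D=A\cup C$ together with $B\cup D=B\cup C$ forces $D=C$.
\end{itemize}
Without a disjoint pair, it argues separately that at least three distinct pair unions exist, each of size at most five, and it adds the full support.

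Your two-member-cover case is exactly the paper's ``all triples in $A\cup B$'' subcase. Your case $|\mathcal C|\ge3$ handles the other two situations at once. A two-member cover of at least six points must consist of disjoint triples whose union has size six. So pairwise intersecting triples, or a triple escaping $A\cup B$, always give a minimal cover with at least three members. The private points, as in Lemma~\ref{lem:large-support-cover}, then separate the four unions. This removes the need for the paper's ad hoc count of pair unions. The paper's version avoids the cover notion and works only with pair unions. Yours is more uniform and reuses a device already present in the paper.
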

\begin{proof}
Suppose first that two triples $A,B$ are disjoint. If another triple
$C$ is not contained in $A\cup B$, then
$A\cup B,A\cup C,B\cup C,A\cup B\cup C$ are four distinct unions
larger than a triple. Otherwise all four triples lie in $A\cup B$.
For a third triple $C$, the three sets $A\cup C,B\cup C,A\cup B$
are distinct. A fourth triple $D$ must give a further union: if it
did not, $A\cup D=A\cup C$ and $B\cup D=B\cup C$, forcing $D=C$.

If no pair is disjoint, all pair unions have size at most five. There
are at least three distinct pair unions. For otherwise, let
$P=A\cup B$, and choose a triple $C$ with a point outside $P$.
Both $A\cup C$ and $B\cup C$ must be the same other union $Q$,
so $P\subseteq Q$. No triple can then have a point outside $Q$,
since its union with $A$ would be a third pair union. This contradicts
support at least six. The three pair unions and the full support
are four distinct unions larger than a triple.
\end{proof}

The next lemma measures the extra unions forced by support of size
at least six. We use one simple injection several times: adjoining a
fixed point to distinct triples gives distinct sets. An image of size
three is unchanged; from an image of size four we recover the triple
by deleting the fixed point.

\begin{lemma}\label{ss3-lem:gap}
If a family $\mathcal A$ of $7\le a\le10$ distinct triples has support
of size at least six, then it generates at least $a+2$ sets of size
at least four.
\end{lemma}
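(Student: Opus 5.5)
The plan is to induct on the support size, with the single-point injection just described as the main tool. The base case is support exactly six, where the count can be made directly. There all unions of size at least four are subsets of a six-point set $V$ of sizes $4$, $5$, $6$. Complementing in $V$, as in Section~\ref{small-intersections}, turns the generators into a family $\mathcal J$ of $a$ triples. The required unions become realised intersections of size at most two. Since $a\ge7>\binom{5}{2}-3$, the family $\mathcal J$ exceeds $\binom{5}{2}$ only when $a\ge 11$. So I would not use Lemma~\ref{lem:small-intersections}. Instead I would argue by hand, using the first-strip table \eqref{ss3-eq:first-small}. With $7\le a\le10$ triples on six points, at most $\binom53=10$ avoid any given point. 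Hence either some point $p$ lies in at least two triples and its deletion leaves a smaller family, or the family is very structured. I would combine \eqref{ss3-eq:first-small} with a count of unions through $p$: five or six generators already give at least $5$ or $6$ sets of size $\ge4$. The unions containing a link edge then supply the remaining two.

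For the general step, suppose the support has at least seven points. Choose a point $p$ of minimum degree $\delta$, let $\mathcal B$ be the generators avoiding $p$, and write $b=|\mathcal B|=a-\delta$. I will split into two cases according to whether $\mathcal B$ still has at least seven triples on support at least six.

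\emph{Case 1: $b\ge 7$.} Induction gives at least $b+2$ sets of size $\ge4$ from $\mathcal B$, none containing $p$. The remaining task is to find $\delta$ further generated sets through $p$ of size at least four. For each link edge $e$, fix a generator $B\in\mathcal B$ not containing $e$; such a $B$ exists since $b\ge7$. Then $\{p\}\cup e\cup B$ is a set of size $\ge4$ through $p$. To make these distinct, I would instead use the injection $U\mapsto U\cup\{p\}$ applied to suitable unions. Concretely, I take the $\delta$ generators through $p$ themselves, which are triples not counted. Together with the full support this gives $b+2+\delta$ minus overlaps. The full support was already counted, so what I really need is $\delta$ sets of size $\ge4$ through $p$. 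For these I use the generators $A\ni p$ each joined with a fixed generator $B_0$ avoiding $p$. These give distinct sets when the $A\setminus B_0$ differ, and repairs come from Lemma~\ref{lem:ind-two-edge} when $\delta\ge2$.

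\emph{Case 2: $b\le6$.} This forces $\delta\ge a-6\ge1$, and by minimality the support is at most $3a/\delta$, hence small. This case I would handle directly, with Lemma~\ref{ss3-lem:four} applied to four of the triples avoiding $p$ and the injection adjoining $p$.

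The main obstacle is ensuring that the sets through $p$ are distinct and are not already among the $b+2$ unions of $\mathcal B$. I would first try the two-edge injection of Lemma~\ref{lem:ind-two-edge} on the $(3)$-level of $\mathcal B$, restoring $p$ as in Corollary~\ref{cor:ind-bridge}. That yields $b$ distinct sets through $p$ of size $4$ or $5$, which is far more than $\delta$ whenever $\delta\le b$. In that situation only the $a+2$ total needs checking.
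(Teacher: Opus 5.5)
\textbf{There is a genuine gap: the six-point base case, which carries most of the lemma, is not proved.}

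The tools you name for six points are: delete a minimum-degree point $p$, bound $M_4(\mathcal B)$ by \eqref{ss3-eq:first-small}, and restore $p$ through two link edges. Together they certify only $M_4(\mathcal B)+b$ large unions. You cannot add the full support, because a restored image $U\cup e\cup\{p\}$ can itself have six points; this is why Corollary~\ref{cor:ind-bridge} assumes support at least seven at cutoff four. (Also, the table gives $4$ and $5$ for five and six triples, not $5$ and $6$.)

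This count falls short as soon as $\delta\ge3$:
\begin{itemize}
\item For $a=10$ with every degree five, as in the $2$-$(6,3,2)$ design, you get $4+5=9<12$.
\item For $a=8$, $\delta=3$ you get $9<10$.
\item For $a=7$, $\delta=3$ only four triples survive, and the first-strip theorem says nothing about them.
\end{itemize}
The sentence about link edges supplying ``the remaining two'' does not close this.

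The missing idea is the paper's dichotomy on six points. If every five-subset of $V$ is generated, count incidences between the $a$ triples and the fifteen four-subsets. A non-generated four-set contains at most one triple and any four-set at most four, so $3a\le15+3g$. This gives $g\ge a-5$ generated four-sets, and with the six five-sets and $V$ the total is $a+2$. If some $V\setminus\{x\}$ is not generated, an uncovered point $y$ forces every generator through $y$ to contain $x$. The family then splits into triples on $P=V\setminus\{x,y\}$, triples $\{x\}\cup e$ with $e$ an edge on $P$, and triples $\{x,y,z\}$. A case analysis on the number of triples avoiding $x$ finishes the count.

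\textbf{The induction step is essentially the paper's, but three cases are left open.} Read through its last paragraph, your step for $\delta\ge2$ is Corollary~\ref{cor:ind-bridge}: the two-edge injection on all $b$ surviving triples plus the full support. After that you only need $M_4(\mathcal B)\ge\delta+1$. The open cases are these.
\begin{itemize}
\item Lemma~\ref{lem:ind-two-edge} needs two link members, so $\delta=1$ is not covered. Let $E$ be the only triple through $p$. The paper identifies $p$ with a point $z$ such that $(E\setminus\{p\})\cup\{z\}$ is not a generator, as in Lemma~\ref{ss3-lem:one}. Otherwise it uses the five or more triples through the pair $E\setminus\{p\}$.
\item Your Case~1 invokes the induction hypothesis for $\mathcal B$. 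But two points can vanish together and leave $b\ge7$ triples on five points, which neither of your cases covers. It is simpler to use \eqref{ss3-eq:first-small}, valid on any support, together with the averaging bound $\delta\le\lfloor 3a/7\rfloor$.
\item That reduces your Case~2 to $a=7$, $m=7$, all degrees three. Lemma~\ref{ss3-lem:four} handles four surviving triples on six points. When they lie on five points, you still need the direct count of the unions through the vanished pair $\{p,y\}$.
\end{itemize}
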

\begin{proof}
We induct on support. On six points, either every five-subset is
generated, or a blocking point forces the generators into three simple
forms. Once that initial case is proved, the degree reduction and the
cutoff-raising lemma handle larger supports.

\emph{Six points, with every five-subset generated.} Let the support
be $V$, and let $g$ be the number of generated four-subsets.
Each triple lies in three four-subsets. A four-subset which is not
generated contains at most one triple, while any four-subset contains
at most four. Hence
\[
 3a\le15+3g.
\]
The six five-subsets, the full support, and these $g\ge a-5$
four-subsets give the assertion.

\emph{Six points, with a missing five-subset.} Suppose
$V\setminus\{x\}$ is not generated, and choose an uncovered point $y$. Every generator through $y$ also contains
$x$. Put $P=V\setminus\{x,y\}$, a four-set. The generators have
three forms:
\[
 \alpha\text{ triples on }P;\qquad
 \beta\text{ sets }\{x\}\cup e\ (e\in G);\qquad
 \gamma\text{ sets }\{x,y,z\}\ (z\in C).
\]
Here $G$ is a graph on $P$ with $\beta$ edges, $C\subseteq P$,
$|C|=\gamma\ge1$, and
$\alpha+\beta+\gamma=a$.

We separate the count according to the number $\alpha$ of triples
avoiding $x$. With at most one such triple we count in the link of $x$;
with at least two, their union is $P$ and we count the unions through
$x$ and through $\{x,y\}$ separately.

If $\alpha=0$, the link of $x$ has at least seven edges, and
\eqref{ss3-eq:pairs} gives at least twelve unions of size at least three
in that link. Restoring $x$ proves the claim. The same argument works
when $\alpha=1$ and $a\ge8$.

For $\alpha=1,a=7$, the link of $x$ has six edges. If its support
has five points and no leaf (a vertex of degree one), all five
four-subsets are generated. Writing $g_3$ for the number of triples
generated by the link, the incidence count $18\le10+2g_3$ gives
$g_3\ge4$, hence at least ten unions of size at least three.
If the graph has a leaf, its other four points form the complete graph
$K_4$ minus an
edge. There are four generated triples on those points and at least
two through the leaf; the core four-set, three four-sets through the
leaf, and the full support give at least eleven unions.
If instead the link has four points, it is $K_4$. The remaining
triple must be $\{z,u,v\}$, with $z$ outside the link and $u,v$
inside it. The link supplies five large unions through $x$ avoiding
$z$. Writing its other two points as $w,t$, adjoining the generators
with link edges $uv,uw,ut,wt$ to $\{z,u,v\}$ gives four further
unions. This proves the required bound nine.

It remains to consider $\alpha\ge2$. In this case $P$ itself is
generated. If $\beta=0$, then $\alpha+\gamma\ge7$ implies either
$\gamma=4$, or $\gamma=3,\alpha=4$. In the first case, the singletons
of $C$ give eleven large unions through $\{x,y\}$, in addition to
$P$. In the second, they give the three pairs of $C$; the four given
triples on $P$ and $P$ itself give five more unions through
$\{x,y\}$. Together with $P$ this gives nine, as required.

Assume $\beta>0$. Let $u$ count three-subsets of $P$ that can be
generated using an edge of $G$, together with the given triples on
$P$. Let $v_i$, for $i=2,3$, count $i$-subsets of $P$ that can be
generated using at least one singleton of $C$, together with those
edges and triples. The sets counted by $u$ yield unions containing
$x$ and avoiding $y$; those counted by $v_i$ yield unions containing
both. Also $P$, $P\cup\{x\}$, and $V$ are generated. Thus
\begin{equation}\label{ss3-eq:four-count}
 M_{4}(\mathcal A)\ge3+u+v_2+v_3.
\end{equation}

The remaining distinction is whether $C$ is a singleton. If
$\gamma\ge2$, every given triple on $P$ meets
$C$, so $v_3\ge\alpha$, and
\[
 v_2\ge\max\left\{\binom\gamma2,\,
                \beta-\binom{4-\gamma}{2}\right\}.
\]
For $\gamma=2$ we have $u\ge2$. To see this, a graph with at least
three edges either has a three-edge star, which supplies three
triples, or every three-subset contains an edge, so the given
$\alpha\ge2$ triples count. With two edges, $a\ge7$ forces
$\alpha\ge3$, and at most one three-subset contains no edge.
With one edge, $\alpha=4$, and its two containing triples count.
Thus $u+v_2\ge\beta+1$. For $\gamma=3$, the same argument gives
$u\ge2$ when $\beta\ge3$; two edges give $u\ge1$; and one edge
needs no contribution from $u$. Since $v_2\ge\max\{3,\beta\}$,
we obtain $u+v_2\ge\beta+2$. These inequalities and
\eqref{ss3-eq:four-count} prove the claim. If $\gamma=4$, simply use
$v_2=6,v_3=4$.

If $C=\{z\}$, let $d$ be the degree of $z$ in $G$.
Then $v_2=d$ and $v_3\ge\beta-d$: adjoin $z$ to each edge avoiding
it. Usually $u\ge\alpha$, which finishes the count. Here is the
complete description of the exceptions. Since
$\alpha+\beta\ge6$ and $\alpha\le4$, we have $\beta\ge2$.
Four edges meet every three-subset. Three edges do so unless they
form a star, which still generates three three-subsets. With two
edges, $\alpha=4$; disjoint edges meet every three-subset, while
adjacent edges miss just one. Thus $u<\alpha$ occurs only when
$\alpha=4$ and $G$ is either two adjacent edges or a three-edge
star. In either case $u=3$ and $v_3=3$, since all four triples on
$P$ are given. For two edges \eqref{ss3-eq:four-count} gives at least
nine; for the star, $d\ge1$ gives at least ten. These are exactly
the required bounds. This completes the six-point proof.

\emph{The induction step on support.} Let $m\ge7$. A degree-one point is
identified as in Lemma~\ref{ss3-lem:one}, keeping support at least six;
if no identification is possible, five triples consisting of a fixed
pair and distinct third points give
$2^5-5-1=26$ large unions. Assume then that minimum degree is at
least two. For $a=8,9,10$, averaging gives minimum degree at most
$3,3,4$, respectively. Deletion leaves at least $5,6,6$ triples.
By \eqref{ss3-eq:first-small}, their total union-closures have at least
$9,11,11$ members. Corollary~\ref{cor:ind-bridge} at cutoff four adds
one and proves the claim.

For $a=7$, minimum degree two is handled in the same way. The only
remaining case has $m=7$ and every degree three. Deletion leaves four
triples on either six or five points. On six points these have at
least eight nonempty unions, by Lemma~\ref{ss3-lem:four}, and
Corollary~\ref{cor:ind-bridge} gives nine. On five points, two points
$p,y$ disappeared together. The removed triples are $\{p,y,z\}$
for three distinct points $z$ of the surviving support $Z$.
Through $\{p,y\}$ we obtain the three pairs of these points, the
four distinct sets $B\cup\{z_0\}$ for a fixed such point $z_0$
and a surviving triple $B$, and $Z$ itself. Their outside sizes are
$2$, $3$ or $4$, and $5$, respectively. They give eight large
unions through the pair, and $Z$ gives one avoiding it.
\end{proof}

\subsection{The cutoffs four and five}

For a fixed cutoff we need only check the first value of $N$ at each
jump of \eqref{ss3-eq:target}. A larger family contains a subfamily at
that value, with no more unions. In each case we induct on support.
The minimum-support case is Theorem~\ref{thm:second-minimum}. Lemmas~\ref{ss3-lem:degree}
and~\ref{ss3-lem:one} allow us, on larger support, to assume
$2\le\delta\le m-1$ and delete a minimum-degree point. Write
$\mathcal B$ for the surviving family and $b=|\mathcal B|$.

\begin{proposition}\label{ss3-prop:four}
The triple second-strip inequality holds at cutoff four.
\end{proposition}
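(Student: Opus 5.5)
Here \(n=4\) and \(N=10+R\) with \(1\le R\le10\). By \eqref{ss3-eq:target}, the jump points that need checking are \(R=1,2,3,5,6,8\), with targets \(T_4(R)=13,16,17,20,21,22\). Only the first value of \(N\) at each jump matters, since a larger family contains a subfamily at that value with no more unions. I will induct on the support size \(m\). The case \(m=6\) is Theorem~\ref{thm:second-minimum}. For \(m\ge7\), Lemmas~\ref{ss3-lem:degree} and~\ref{ss3-lem:one} let me assume \(2\le\delta\le m-1\), with \(\delta\le d(N)\). I then delete a point \(p\) of minimum degree. The remaining family \(\mathcal B\) has \(b=N-\delta\) triples on \(w\in\{m-1,m-2\}\) points.

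Corollary~\ref{cor:ind-bridge} applies at cutoff \(n=4\), because \(m\ge7=n+3\) and \(\delta\ge2\). It gives
\[
M_4(\mathcal A)\ge M_3(\mathcal B)+\max\{1,\lceil(m-6)/3\rceil\}.
\]
Here \(M_3(\mathcal B)=b+M_4(\mathcal B)\), since every nonempty union has size at least three. So it suffices to bound \(M_4(\mathcal B)\) from below. I plan three cases.

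\textbf{Case \(b\ge11\).} Then \(\mathcal B\) lies in the second strip for cutoff four, or above it, and the bound for \(\mathcal B\) comes from induction on the support. When \(b\) falls back into the first strip, I use Theorem~\ref{thm:first-strip}.

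\textbf{Case \(5\le b\le10\).} I use the table \eqref{ss3-eq:first-small}. When \(7\le b\le10\) and \(\mathcal B\) has support at least six, Lemma~\ref{ss3-lem:gap} gives the sharper bound \(M_4(\mathcal B)\ge b+2\).

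\textbf{Case \(b\le4\).} This case is excluded by the degree bound for the relevant \(N\).

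The arithmetic goes as follows. For \(N=11\) (\(R=1\)) we have \(d(11)=5\), so \(b\ge6\). This gives \(M_3(\mathcal B)\ge 6+5=11\), and the corollary adds only one, for a total of \(12<13\). So the small deletions are not enough by themselves. Where the averaging bound falls short, I will use the improved last term of Corollary~\ref{cor:ind-bridge}, which counts the unions through \(p\) of size greater than \(6\). I will also keep in mind that the two-edge injection already supplies \(M_3(\mathcal B)\) unions through \(p\), and that \(\mathcal B\) itself contributes \(M_4(\mathcal B)\) unions avoiding \(p\). So the full count is
\[
M_4(\mathcal A)\ge M_4(\mathcal B)+M_3(\mathcal B)+(\text{unions through }p\text{ of size}\ge7).
\]
This is not the weaker \(M_3(\mathcal B)+1\). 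I must double-check which of the two the corollary actually asserts. Its proof keeps the \(\mathcal B\)-unions of size at least \(n\), and lifts the \((n-1)\)-unions of \(\mathcal B\) together with the full support. With \(n=4\), this already gives \(M_4(\mathcal B)+|\mathcal B_3|+1=M_3(\mathcal B)+1\), so the weaker form is right.

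For \(N=11\) and \(\delta=5\), we then need the extra unions through \(p\). The support has \(m\ge7\), so the restored unions through \(p\) with size at least \(7\) number at least \(\lceil(m-6)/3\rceil\ge1\). The remaining slack has to come from Lemma~\ref{ss3-lem:gap}: with \(b=6\) I take a subfamily, and with \(b\ge7\) the lemma gives \(M_4\ge b+2\). That yields \(M_3(\mathcal B)\ge 2b+2\ge 16\) whenever \(b\ge7\), which suffices for every target up to \(R=3\) when \(\delta\le N-7\).

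\textbf{Main obstacle.} The hardest step is the tight cases where \(\delta\) equals \(d(N)\) and \(b\) is close to six. These cases have very few points, \(m=7\) or \(8\), since \(m\delta\le 3N\) forces the support to be small. There Lemma~\ref{ss3-lem:gap} may fail, because \(\mathcal B\) can collapse to five points when two points disappear together. I would handle these by direct structure, as at the end of the proof of Lemma~\ref{ss3-lem:gap}. With all degrees equal to \(\delta\) and \(m\delta=3N\), the family is regular on seven or eight points. I would count unions through a fixed pair of disappearing points, together with the surviving triples, and then compare with the target value for each jump \(R=1,2,3,5,6,8\), one at a time.
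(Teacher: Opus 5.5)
Your framework matches the paper's proof: induction on the support, reduction to $2\le\delta\le m-1$, deletion of a minimum-degree point $p$, and $M_4(\mathcal A)\ge M_3(\mathcal B)+1=b+M_4(\mathcal B)+1$ from Corollary~\ref{cor:ind-bridge}, with $M_4(\mathcal B)$ bounded by Lemma~\ref{ss3-lem:gap}. As written, however, two steps fail.

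The first is the obstacle you report at $N=11$. It comes only from using $\delta\le d(11)=5$. Since $m\ge7$, averaging gives $7\delta\le m\delta\le 3N=33$, so $\delta\le4$ and $b\ge7$. Your remedy for $b=6$ would not work anyway: Lemma~\ref{ss3-lem:gap} needs at least seven triples, and a family of six triples has no such subfamily. With the correct bounds $\delta\le4,5,5,6,6,6$ at $N=11,12,13,15,16,18$, one has $b\ge7,7,8,9,10,12$, so $b\le6$ never arises. When the surviving support has at least six points, choose $j=\min\{b,10\}$ surviving triples whose support still has six points; at most four triples are needed to reach six points. Lemma~\ref{ss3-lem:gap} and the corollary then give $M_4(\mathcal A)\ge 2j+3\ge17,17,19,21,23,23$, which meets every target. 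The improved term $\lceil(m-6)/3\rceil$ is not needed, and at $m=7$ it adds nothing.

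The second failure is the case you defer. There your route provably cannot succeed, and the issue has nothing to do with regularity. The surviving support $Z$ has five points exactly when $m=7$ and a second point $y$ has the same incidences as $p$. Then the removed triples are $\{p,y,z\}$ for $\delta$ distinct $z\in Z$, so $\delta\le5$, $b\le10$, and $N\le15$. Above the triples only the five four-subsets of $Z$ and $Z$ itself remain, so $M_3(\mathcal B)\le b+6$. At $N=15$ this case forces $\delta=5$ and $b=10$. The bound $M_3(\mathcal B)+1$ is then at most $17$, against the target $20$.

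You name the right remedy, counting through the pair, but the count itself is missing. By \eqref{ss3-eq:first-small}, since $b\ge6$, at least five unions of $\mathcal B$ of size at least four avoid the pair. Through the pair, take three kinds of sets:
\begin{enumerate}
\item[(a)] the $\binom{\delta}{2}$ sets $\{p,y,z,z'\}$;
\item[(b)] the $b$ sets $B\cup\{p,y,z_0\}$ with $B\in\mathcal B$ and $z_0$ one fixed such point, which are distinct by the fixed-point injection;
\item[(c)] the set $Z\cup\{p,y\}$.
\end{enumerate}
Their parts outside the pair have sizes $2$; $3$ or $4$; and $5$, so all these sets are distinct. This gives $b+\binom{\delta}{2}+6=N+\delta(\delta-3)/2+6\ge N+5$. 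That meets the targets $13,16,17,20$ at the only possible values $N=11,12,13,15$.
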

\begin{proof}
On support $m\ge7$, averaging and \eqref{ss3-eq:degree} give the following
bounds.
\[
\begin{array}{c|rrrrrr}
 N&11&12&13&15&16&18\\ \hline
 T_4(N-10)&13&16&17&20&21&22\\
 \delta\text{ is at most}&4&5&5&6&6&6\\
 |\mathcal B|\text{ is at least}&7&7&8&9&10&12
\end{array}
\]
For $N=11$, the bound four uses $\delta\le\lfloor33/7\rfloor$.
The other entries follow directly from \eqref{ss3-eq:degree}.

If $\mathcal B$ has support at least six, choose, respectively,
$j=7,7,8,9,10,10$ surviving triples with that property.
Such a choice is always possible: at most four triples are needed
to cover at least six points, and the selection can then be enlarged.
Lemma~\ref{ss3-lem:gap} gives at least $j+(j+2)=2j+2$ nonempty unions.
Corollary~\ref{cor:ind-bridge} therefore gives
$17,17,19,21,23,23$ large unions of $\mathcal A$, enough in every case.

If the surviving support $Z$ has five points, then $m=7$ and two
points disappeared together. The removed triples are $\{p,y,z\}$
for $\delta$ distinct $z\in Z$, with $2\le\delta\le5$.
Writing $b=N-\delta$, we have $6\le b\le10$ and $N\le15$.
There are at least five large unions avoiding the pair by
\eqref{ss3-eq:first-small}. Through the pair count the
$\binom\delta2$ pairs of the indicated points $z$, the $b$ distinct sets
$B\cup\{z_0\}$ with $B\in\mathcal B$ and a fixed such point $z_0$,
and the full set $Z$.
Their sizes outside the pair are two, three or four, and five,
respectively, so the three counts are disjoint. The resulting bound is
\[
 b+\binom\delta2+6
 =N+\frac{\delta(\delta-3)}2+6\ge N+5,
\]
which covers each of the possible jumps $N=11,12,13,15$.
\end{proof}

We need a little more than the sharp unrestricted bound at three
values of $N$. Excess support supplies it.

\begin{lemma}\label{ss3-lem:three-gaps}
On support at least seven, families of $15$, $16$, and $18$ triples
have at least $21$, $23$, and $29$ unions of size at least four,
respectively.
\end{lemma}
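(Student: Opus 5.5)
I would follow the same template as Proposition~\ref{ss3-prop:four}: induct on the support size $m\ge7$ and delete a point $p$ of minimum degree $\delta$, writing $\mathcal B$ for the surviving triples and $b=|\mathcal B|=N-\delta$. Here $N\in\{15,16,18\}$, and the targets are $21,23,29$ respectively.

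\textbf{Initial reductions.} A point of degree one is handled as in Lemma~\ref{ss3-lem:one}. Either it can be identified with another point, which keeps the support at least seven and does not increase the union count, so induction on support applies. Or a fixed pair lies in at least $m-2\ge5$ triples, and these alone give at least $26$ large unions; at $N=18$ I would use more of these triples to reach $29$. If $\delta\ge m$, Lemma~\ref{ss3-lem:degree} gives $1+\binom m2\ge29$ unions of size at least $m-2\ge5$. So I may assume $2\le\delta\le\min\{m-1,d(N)\}$, where $d(N)=5,6,6$ for $N=15,16,18$. Averaging on $m\ge7$ points gives the sharper bound $\delta\le\lfloor3N/m\rfloor$, which is $6,6,7$ at $m=7$, combined with $d(N)$.

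\textbf{Main case: surviving support at least six.} Choose $j=\min\{b,10\}$ surviving triples whose support has at least six points. Lemma~\ref{ss3-lem:gap} gives $M_4\ge j+2$ and $M_3\ge 2j+2$ for these $j$ triples. Corollary~\ref{cor:ind-bridge} then yields $M_4(\mathcal A)\ge 2j+3$. For $N=15$ we have $b\ge9$, giving $21$. For $N=16$ we have $b\ge10$, giving $23$.

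For $N=18$ this reaches only $23$, so the gap is larger there. I would use the strengthened final clause of Corollary~\ref{cor:ind-bridge}, which adds the unions through $p$ of size greater than $6$; this alone is not enough. So I would instead apply the first-strip theorem (Theorem~\ref{thm:first-strip}) to all $b\ge12$ surviving triples at cutoff three and four separately. Alternatively, I would apply Proposition~\ref{ss3-prop:four} together with the already proved $N=15,16$ cases of this lemma to $\mathcal B$ at cutoff four, and bound $M_3(\mathcal B)=b+M_4(\mathcal B)$.

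\textbf{Main obstacle: the case $N=18$.} Concretely, $M_4(\mathcal A)\ge M_3(\mathcal B)+1\ge b+M_4(\mathcal B)+1$. With $b=12$ and $M_4(\mathcal B)\ge T_4(2)=16$, this gives $29$. When $b\ge12$ the surviving family lies in the second strip for cutoff four or beyond, so the needed bound on $M_4(\mathcal B)$ comes from the earlier propositions. The same route also reproves $N=16$ ($b\ge10$, first strip: $M_4\ge 10+\dots$). I would check each pair $(N,\delta)$ against the table, using the larger $M_4(\mathcal B)$ values available on support at least six.

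\textbf{Support drop to five points.} The remaining case is $m=7$ with two points $p,y$ disappearing together, so the removed triples are $\{p,y,z\}$ for $\delta\le5$ points $z\in Z$, with $|Z|=5$. As in Proposition~\ref{ss3-prop:four}, I count disjoint collections:
\begin{itemize}
\item unions avoiding $\{p,y\}$: at least $M_4(\mathcal B)$, with $b\le10$ triples on $Z$;
\item unions through the pair: the $\binom\delta2$ pairs of the indicated points, the $b$ sets $B\cup\{z_0\}$, and $Z$ itself.
\end{itemize}
Here $b\le10$ forces $N\le15$, so this case arises only for $N=15$, with $\delta=5$ and $b=10$. Then $\mathcal B$ is all of $\binom Z3$, with $M_4(\mathcal B)=6$, and the total is $6+10+10+1=27\ge21$.

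The step I expect to be hardest is $N=18$ with small $\delta$ and large support. There the bridge corollary adds only one union, and I must make sure that the lower-cutoff count of $\mathcal B$ genuinely reaches $28$; if it does not, I would perform a second deletion and use the trace estimate of Lemma~\ref{lem:ind-trace}.
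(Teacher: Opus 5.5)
Your main case is the paper's argument. You delete a minimum-degree point, apply Lemma~\ref{ss3-lem:gap} to $9$ or $10$ survivors for $N=15,16$, and for $N=18$ use $M_3(\mathcal B)=b+M_4(\mathcal B)\ge12+16$ from Proposition~\ref{ss3-prop:four}. You add one union by Corollary~\ref{cor:ind-bridge}, and you note that a five-point surviving support occurs only for $N=15$, $m=7$. (Incidentally $d(15)=6$, not $5$, but your bound $b\ge9$ is right.) The problems are in your two preliminary reductions, and both occur at $m=7$, the base of your induction on support.

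\textbf{Degree one.} Identifying a degree-one point with another point lowers the support by one, so at $m=7$ it does not keep the support ``at least seven''. On six points the asserted bounds are false. By Theorem~\ref{thm:second-minimum} and the max-lex family, $15$, $16$, $18$ triples on six points can have only $20$, $21$, $22$ unions of size at least four. So your induction has no base here, and identification discards exactly the unions you need. Your other branch also fails at $m=7$, $N=18$: there are only five triples through the fixed pair, giving $26<29$, so you cannot ``use more of these triples''.

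The missing idea is to count the unions through the degree-one point directly; this needs no induction at all. Let $p$ have sole triple $E$ and delete $p$. Proposition~\ref{ss3-prop:four} gives $17$, $20$, $21$ large unions among the $N-1$ survivors. Each set $E\cup C$ is a large union through $p$, hence disjoint from those. A trace $C\setminus E$ has at most two preimages, so there are at least $\lceil(N-1)/2\rceil$ such sets, for totals of $24$, $28$, $30$.

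\textbf{High degree.} The case $\delta\ge m$ forces $m^2\le3N\le54$, so it occurs only for $N=18$, $m=7$. There $1+\binom72=22<29$, so your claimed inequality fails in the one case that can arise. You need seven more unions. The paper gets them by an incidence count. Each triple lies in four four-sets. A four-set that is not generated contains at most one triple, and any four-set contains at most four. Hence $72\le35+3g$, so at least $13$ four-sets are generated.

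With these two repairs, your proof goes through without any induction on support.
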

\begin{proof}
If a point has degree one, let $E$ be its sole triple. After its
deletion, the distinct surviving traces outside $E$ have at most two
preimages each. Adjoining $E$ therefore gives at least
$\lceil(N-1)/2\rceil$ further unions of size at least four.
Proposition~\ref{ss3-prop:four} applied to the surviving triples gives
totals at least $24,28,30$.

If minimum degree is at least $m$, averaging permits only $N=18,m=7$.
Lemma~\ref{ss3-lem:degree} gives twenty-two unions of size at least five.
Let $g$ count the generated four-sets. The incidence count
$72\le35+3g$ gives at least
thirteen generated four-sets, more than we need.

In all other cases, delete a minimum-degree point.
The bound \eqref{ss3-eq:degree} gives degree at most six, so at least
$9,10,12$ triples survive. If their support is at least six,
Lemma~\ref{ss3-lem:gap} gives total closure sizes at least $20,22$
in the first two cases. In the third, Proposition~\ref{ss3-prop:four}
gives at least $12+16=28$. Add one using
Corollary~\ref{cor:ind-bridge}. A surviving support of size five is
possible only for $N=15,m=7$: the surviving ten triples form the
complete layer, and the five removed triples share a pair. Those five
triples already give twenty-six large unions; the complete
layer gives six more.
\end{proof}

Whenever one of the preceding bounds is used for more generators,
we can retain support at least seven in the selected subfamily:
at most five triples cover at least seven points.

\begin{proposition}\label{ss3-prop:five}
The triple second-strip inequality holds at cutoff five.
\end{proposition}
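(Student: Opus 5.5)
We follow the template of Proposition~\ref{ss3-prop:four}, now at cutoff $n=5$, where $N=\binom63+R=20+R$ and the targets come from \eqref{ss3-eq:target}: the jumps occur at $R=1,2,3,6,7,10$, i.e.\ $N=21,22,23,26,27,30$, with $T_5(R)=18,22,23,27,28,29$. It suffices to treat these first values of $N$ at each jump, since a larger family contains a subfamily at the preceding jump value with no more unions.

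We induct on the support size $m$. The case $m=7$ is Theorem~\ref{thm:second-minimum}. For $m\ge8$, Lemmas~\ref{ss3-lem:degree} and~\ref{ss3-lem:one} let us assume $2\le\delta\le m-1$; the case $\delta\ge m$ gives at least $1+\binom m2\ge H(7)$ large unions via \eqref{ss3-eq:high-degree}. We then delete a minimum-degree point $p$. Averaging, together with $\delta\le d(N)$ from \eqref{ss3-eq:degree}, bounds $\delta$ by about $7$ or $8$. The surviving family $\mathcal B$ therefore keeps $b\ge N-\delta$ triples, which lie well above the first-strip threshold $10$ at cutoff four, in fact in the second strip at cutoff four (above $15$). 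Corollary~\ref{cor:ind-bridge} gives
\[
M_5(\mathcal A)\ge M_4(\mathcal B)+1.
\]
We then bound $M_4(\mathcal B)$ by Proposition~\ref{ss3-prop:four}, or more sharply by Lemma~\ref{ss3-lem:three-gaps} when $\mathcal B$ has support at least seven and $b\in\{15,16,18\}$. This is presumably the reason that lemma was proved. Tabulating $N$, the bound on $\delta$, the bound on $b$, $T_4(b-10)+1$ (or the improved gap value plus one), and $T_5(N-20)$ should settle most jumps directly. Where the bound $+1$ falls short, we use the stronger replacement term in Corollary~\ref{cor:ind-bridge}, namely $\lceil(m-7)/3\rceil$ unions through $p$ of size above $7$, which helps once $m\ge8$.

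Two degenerate configurations need separate treatment. If the surviving support drops by two, then $m$ is small and two points $p,y$ vanish together, with removed triples $\{p,y,z\}$. We count directly, as in Proposition~\ref{ss3-prop:four}. The unions avoiding the pair come from the first-strip bound at cutoff five, or the full layer on the remaining points. The unions through $\{p,y\}$ come from the pairs of link points $z$, from the sets $B\cup\{z_0\}$ for $B\in\mathcal B$ of size four, and from the large unions of $\mathcal B$. For the other configuration, if the chosen subfamily must keep support at least seven for Lemma~\ref{ss3-lem:three-gaps}, we use the remark that five triples suffice to cover seven points.

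The main obstacle I expect is the smallest jumps, especially $N=22,23$ with targets $22,23$, and possibly $N=26$. There $\delta$ may be as large as the averaging bound allows, so $b$ can fall to a value where the cutoff-four model count plus one is exactly one short. For these I would first try the trace argument of Lemma~\ref{lem:ind-trace}: perform a second deletion and adjoin a fixed surviving triple $E$, then bound $Q_{h-3}$ from below using the first-strip values \eqref{ss3-eq:first-small}. If that fails, I would fall back on the excess-support version Lemma~\ref{ss3-lem:three-gaps} together with a direct case check on $m=8$.
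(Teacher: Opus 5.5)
Your plan is the paper's proof. After one minimum-degree deletion, at least $14,15,16,18,19,21$ triples survive for $N=21,22,23,26,27,30$. Proposition~\ref{ss3-prop:four} at $b=14$ and Lemma~\ref{ss3-lem:three-gaps} at $b=15,16,18$, plus one from Corollary~\ref{cor:ind-bridge}, already close every jump, so neither the trace fallback nor the $\lceil(m-7)/3\rceil$ term is needed.

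The only degenerate case is $m=8$ with six-point surviving support. There $7$ unions avoid the pair and at least $b+6$ contain it, giving $b+13\ge28$. Note that unions of two removed triples have size four and do not count at cutoff five, so drop that component from your direct count.
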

\begin{proof}
On support $m\ge8$, the relevant numbers are
\[
\begin{array}{c|rrrrrr}
 N&21&22&23&26&27&30\\ \hline
 T_5(N-20)&18&22&23&27&28&29\\
 |\mathcal B|\text{ is at least}&14&15&16&18&19&21.
\end{array}
\]
The last row follows from \eqref{ss3-eq:degree}. If $\mathcal B$ has support
at least seven, Proposition~\ref{ss3-prop:four} gives seventeen large
unions at $b=14$, while Lemma~\ref{ss3-lem:three-gaps} gives
$21,23,29$ at $b=15,16,18$. Adding one by
Corollary~\ref{cor:ind-bridge} proves all six bounds.

If its support $Z$ has six points, then $m=8$, two points disappeared
together, and the removed triples have the form $\{p,y,z\}$ with
$\delta\le6$. Thus $b=N-\delta\ge15$ and $N\le26$. At most five
triples on $Z$ are missing, so every five-subset of $Z$ is generated:
failure would require all six triples through an uncovered point in
that five-subset to be missing. Together with $Z$, these give seven
unions of size at least five avoiding the pair.

Fix one of the points $z_0$ occurring with the pair. Through the pair,
the $b$ distinct sets $B\cup\{z_0\}$, $B\in\mathcal B$, give unions
of size five or six. The five
five-subsets of $Z$ containing $z_0$ give five further unions of size
seven; $Z$ gives one of size eight. The total is at least
$b+13\ge28$, enough for every jump with $N\le26$.
\end{proof}

\subsection{Traces at the small cutoffs}

The first deletion lets us lower the cutoff, but its remaining family
may not give the whole target. The second deletion separates unions
avoiding a point from additional unions containing it. Retention bounds
the number of surviving triples; trace capacity turns that number into
a lower bound for the additional unions.

Assume the degree reductions have given $2\le\delta\le m-1$.
For triples the first deletion leaves support-size $w\ge\delta$,
so we use Lemma~\ref{lem:ind-retention} with $b=0$.

Let $p$ be the first point deleted and $\mathcal B$ the surviving family.
Delete a minimum-degree point $x$ of $\mathcal B$, leaving
$\mathcal C$. Choose a triple $E\in\mathcal B$ through $x$. Let $\mathcal D$ be the
union-closure, including the empty set, of the traces $C\setminus E$
for $C\in\mathcal C$. Put
\begin{equation}\label{ss3-eq:trace-def}
 f_j=|\mathcal D_j|,\qquad W=f_1+2f_2+f_3,\qquad
 Q_h=|\mathcal D_{\ge h}|=\sum_{j\ge h}f_j.
\end{equation}
Lemma~\ref{lem:ind-trace}, followed by
Corollary~\ref{cor:ind-bridge} at the first deleted point, gives
\begin{equation}\label{ss3-eq:mass}
 |\mathcal C|\le W,
\end{equation}
and
\begin{equation}\label{ss3-eq:two-delete}
 M_n(\mathcal A)\ge1+M_{n-1}(\mathcal C)+Q_{n-4}.
\end{equation}
At cutoff $n-1$ in $\mathcal B$, the two terms after the initial $1$
count unions avoiding $x$ and unions through $x$, respectively. The
cutoff-raising lemma transfers their sum back to $\mathcal A$ and
supplies the extra union. Thus a lower bound for $Q_{n-4}$ completes
the two-deletion argument. At cutoff six we need the following estimate.

\begin{lemma}\label{ss3-lem:q2}
For any finite union-closed family $\mathcal D$ with notation \eqref{ss3-eq:trace-def},
$W\ge22$ implies $Q_2\ge14$.
\end{lemma}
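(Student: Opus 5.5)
The plan is to argue by contradiction. Suppose that \(W\ge22\) but \(Q_2\le13\), and write \(s=f_1\) and \(e=f_2\). Two facts drive the argument. First, since \(\mathcal D\) is union-closed, its singletons generate every subset of an \(s\)-set. Second, its \(e\) pairs generate a closure controlled by the pair theorem \eqref{ss3-eq:pairs} of Leck--Roberts--Simpson. Throughout we use the trivial lower bound
\[
 Q_2\ge f_2+f_3+|\mathcal D_{\ge4}|.
\]

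\emph{Step 1: bound \(s\).} The nonempty unions of \(s\) singletons of size at least two number \(2^s-1-s\). This already exceeds \(13\) when \(s\ge5\). Hence \(s\le4\), and \(W\ge22\) forces \(2e+f_3\ge22-s\ge18\).

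\emph{Step 2: bound \(e\).} If \(e\ge7\), apply \eqref{ss3-eq:pairs} at cutoff \(h=2\). Take \(v=4,b=1\) when \(e=7\); for larger \(e\), pass to a subfamily of seven pairs. The extremal configuration is \(K_4\) together with one pendant edge. Its unions of size at least two are the \(7\) pairs, the \(4\) triangles of \(K_4\), the \(3\) triples formed by the pendant edge with an adjacent edge, the four-set of \(K_4\), the unions of the pendant edge with the three edges disjoint from it, and the full five-set. This gives well over \(14\) members, a contradiction. So \(e\le6\).

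\emph{Step 3: combine the two bounds.} Now \(f_3\ge22-s-2e\), so
\[
 Q_2\ge e+f_3+|\mathcal D_{\ge4}|\ge22-s-e+|\mathcal D_{\ge4}|.
\]
With \(s\le4\) and \(e\le6\), this already gives \(Q_2\ge14\) unless \(s+e\ge9\). The remaining cases are therefore \((s,e)\in\{(3,6),(4,5),(4,6)\}\). In each of them we must exhibit \(9-(22-s-e-13)\) further members of size at least four, or equivalently extra slack. That is at most two members.

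\emph{Step 4: the boundary cases.} These are where the real work lies, and I expect them to be the main obstacle.
\begin{itemize}
\item[(a)] If \(s=4\), the four singletons generate all \(11\) subsets of size at least two of a four-set \(S\). The pairs are then \(\binom S2\) together with at most \(e-6\le0\) further pairs, so \(e=6\) is forced and \(f_3\ge6\). At least two triples leave \(S\), and their unions with \(S\) and with each other give the two missing members of size at least four.
\item[(b)] If \((s,e)=(3,6)\), then \(f_3\ge10\). Among six pairs there are two disjoint ones, or else the pairs form a star or triangle-plus-edges configuration. Disjoint pairs give a four-set immediately. In the other configurations, the many triples force unions of size at least four with the singleton closure. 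I would check each graph on six edges by its number of disjoint edge pairs.
\end{itemize}
In all these cases the goal is just two explicit unions of size at least four that are distinct from each other. I would obtain them by adjoining a singleton or pair outside a given triple to that triple, using the injection ``adjoin a fixed point'' already used in Lemma~\ref{ss3-lem:gap}.
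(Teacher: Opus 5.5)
Your proof is correct and follows the paper's route: bound $f_1\le4$, bound $f_2\le6$ (you use the Leck--Roberts--Simpson pair bound, the paper the elementary count $\binom{f_2}{2}\le3(f_3+f_4)\le3(13-f_2)$), then use $Q_2\ge f_2+f_3+|\mathcal D_{\ge4}|$, i.e.\ $W\le f_1+f_2+Q_2$, and dispose of the boundary cases. Those cases need no real work, but a few slips need fixing: you need $s+e-8$ members of size at least four, not $9-(22-s-e-13)$; for $(3,6)$ you only get $f_3\ge7$ (not $10$), and the one required member is simply the union of the six distinct pairs, which has at least four points, so no graph-by-graph check is needed; for $(4,6)$ count $S$ itself together with $S\cup X$ for a triple $X\not\subseteq S$, because $X_1\cup S$, $X_2\cup S$ and $X_1\cup X_2$ can all coincide (e.g.\ $X_1=\{a,b,x\}$, $X_2=\{c,d,x\}$, $S=\{a,b,c,d\}$).
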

\begin{proof}
Suppose $Q_2\le13$. Two distinct pairs have a union of size three
or four, with at most three representations as an unordered pair of
pairs. Thus
\[
 \binom{f_2}{2}\le3(f_3+f_4)\le3(13-f_2),
\]
so $f_2\le6$. Also $f_1\le4$, since five singletons generate
twenty-six sets of size at least two. If $f_1\le3$ and $f_2\le5$,
then $W\le f_1+f_2+Q_2\le21$. If $f_2=6$, the union of those
pairs has size at least four, so the last inequality improves by
one, again giving $W\le21$.

If $f_1=4$, their four-point support generates eleven sets of size
at least two. Any pair or triple not contained in that support
would give at least four further such unions by adjoining subsets
of the singleton support. Indeed, it meets that support in at most
two points. This contradicts $Q_2\le13$. All pair and triple
members therefore lie on those four points, and $W=4+12+4=20$.
\end{proof}

\subsection{The cutoff six}

\begin{proposition}\label{ss3-prop:six}
The triple second-strip inequality holds at cutoff six.
\end{proposition}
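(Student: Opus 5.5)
Following the pattern of Propositions~\ref{ss3-prop:four} and~\ref{ss3-prop:five}, we check only the first value of \(N\) at each jump of \eqref{ss3-eq:target} for \(n=6\). Here \(N=\binom73+R=35+R\), and the jumps occur at \(R=1,2,3,7,8,12\), that is, \(N=36,37,38,42,43,47\). The targets are \(T_6(R)=24,29,30,35,36,37\), read off from \(H(8)=37\) and the table. We induct on support. Minimum support \(m=8\) is Theorem~\ref{thm:second-minimum}. On support \(m\ge9\), Lemmas~\ref{ss3-lem:degree} and~\ref{ss3-lem:one} let us assume \(2\le\delta\le m-1\). In that range \eqref{ss3-eq:degree} gives \(\delta\le d(N)\), which is at most \(20\) for every \(N\le47\).

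The main route is the two-deletion argument. Delete the minimum-degree point \(p\) to obtain \(\mathcal B\), then a minimum-degree point \(x\) of \(\mathcal B\) to obtain \(\mathcal C\). Bound \(|\mathcal C|\) from below by Lemma~\ref{lem:ind-retention} with \(b=0\) and a suitable \(a\). Then apply \eqref{ss3-eq:two-delete}:
\[
 M_6(\mathcal A)\ge1+M_5(\mathcal C)+Q_2 .
\]
The term \(M_5(\mathcal C)\) is bounded below by Proposition~\ref{ss3-prop:five}, or by the first-strip bound at cutoff five if \(|\mathcal C|\le\binom63+\ldots\) falls lower; passing to a subfamily at the last jump below \(|\mathcal C|\) is allowed. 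For \(Q_2\), \eqref{ss3-eq:mass} gives \(W\ge|\mathcal C|\), so whenever \(|\mathcal C|\ge22\), Lemma~\ref{ss3-lem:q2} gives \(Q_2\ge14\). If \(|\mathcal C|\) falls between the thresholds, I would use weaker direct estimates on \(Q_2\) from \(W\). The needed comparison in each case is therefore
\[
 1+T_5(|\mathcal C|-20)+14\ge T_6(N-35)
\]
or its analogue. For example, at \(N=47\) one needs \(M_5(\mathcal C)\ge22\), which holds once \(|\mathcal C|\ge22\).

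Two degenerate branches remain. First, the first deletion can drop the support by two, so that \(\mathcal B\) has \(m-2\) points and two points \(p,y\) disappear together. In that case I would count directly as in Proposition~\ref{ss3-prop:five}. The \(\delta\) triples \(\{p,y,z\}\) and the sets \(B\cup\{z_0\}\) for \(B\in\mathcal B\) give unions through the pair. The nearly complete layer on the surviving \(m-2\ge7\) points gives unions avoiding the pair; with \(m=9\), only few triples are missing, so every six-subset is generated. Second, \(\mathcal B\) or \(\mathcal C\) may have support too small for the lower-cutoff theorems. There the near-completeness argument ("failure would force a whole star of missing triples") again supplies the needed unions.

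The main obstacle is the small-\(N\) jumps \(N=36,37,38\) with the largest allowed degree. There \(|\mathcal C|\) may land just below \(22\), or in the first strip at cutoff five, and the general Lemma~\ref{lem:ind-retention} estimate may be too weak. For those cases I would first sharpen the retention bound by averaging degrees directly (\(\delta\le\lfloor3N/m\rfloor\) with \(m\ge9\), so \(\delta\le12\)). I would also use the explicit first-strip values at cutoff five, \(2+\sigma_3(R)\), together with the stronger term of Corollary~\ref{cor:ind-bridge}, which counts unions through \(p\) of size greater than \(n+2\), to recover the missing one or two unions.
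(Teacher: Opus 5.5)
There is a genuine gap in the lower jumps \(N=36,37,38\). Your treatment of the upper jumps \(N=42,43,47\) does coincide with the paper's. Lemma~\ref{lem:ind-retention} with \(a=8\), \(b=0\) gives \(|\mathcal C|\ge\lceil34\cdot5/8\rceil=22\), and Proposition~\ref{ss3-prop:five} with Lemma~\ref{ss3-lem:q2} then gives \(1+22+14=37\). When the first deletion leaves seven points (\(m=9\)), capacity forces \(N=42\), \(R=\delta=7\) and a complete layer, so a direct count finishes that case.

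At those lower jumps the two-deletion route cannot succeed. At \(N=36\), Lemma~\ref{lem:ind-retention} guarantees only \(|\mathcal C|\ge18\). At cutoff five you then have only the first-strip value \(2+\sigma_3(8)=7\). Even granting \(Q_2\ge14\), this gives \(1+7+14=22<24\). Moreover, \(W\ge18\) does not force anything like \(Q_2\ge14\): four singletons together with all pairs and triples on their support give \(W=20\) but \(Q_2=11\). Your proposed remedies do not close this. The bound \(\delta\le\lfloor3N/m\rfloor\le12\) is weaker than \(\delta\le d(N)\le10\), which is already in \eqref{ss3-eq:degree}. The stronger form of Corollary~\ref{cor:ind-bridge} contributes \(\lceil(m-8)/3\rceil\), which is just one for \(m\le11\).

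The missing idea is to stop after the first deletion when \(R\le6\). Corollary~\ref{cor:ind-bridge} gives \(M_6(\mathcal A)\ge M_5(\mathcal B)+1\), with \(|\mathcal B|\ge N-d(N)\ge27\) since \(d(36)=9\) and \(d(37)=d(38)=10\). Proposition~\ref{ss3-prop:five} applies to \(\mathcal B\) on any support and gives \(M_5(\mathcal B)\ge28\). This settles \(N=36,37\) with \(29\ge24\) and \(29\ge29\), and no degenerate branch arises.

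At \(N=38\) the same argument yields only \(28+1=29<30\), so one more union must be found by a separate case analysis:
\begin{itemize}
\item If \(m\ge12\), the stronger form of the corollary adds two unions.
\item If \(\delta\le8\), then \(|\mathcal B|\ge30\) and \(M_5(\mathcal B)\ge29\).
\item This leaves \(m=10,\delta=9\) and \(m=11,\delta\in\{9,10\}\). When \(\delta\ge m-1\), every \((m-1)\)-subset is generated, because a link whose edges all pass through one omitted point has at most \(m-2\) edges. The \((m-1)\)-subsets through \(p\) have size greater than eight, so they are new beyond the corollary's images.
\item When \(m=11,\delta=9\), either the same holds or an uncovered point has the full nine-edge star as its link. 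Those nine triples generate \(\binom94=126\) six-sets.
\end{itemize}
Your proposal has no counterpart of this step, so as written the jumps \(N=36,37,38\) are not proved.
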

\begin{proof}
Theorem~\ref{thm:second-minimum} treats support eight. On support $m\ge9$, induct
on support and use Lemmas~\ref{ss3-lem:degree} and~\ref{ss3-lem:one} to
assume $2\le\delta\le m-1$. Here $N=35+R$ and the largest target
is $H(8)=37$.

We distinguish the lower steps, where one deletion almost suffices,
from the upper steps, where we use the trace estimate.

\emph{Lower steps: $R\le6$.} For $R=1,2$, \eqref{ss3-eq:degree} leaves at
least twenty-seven triples. Proposition~\ref{ss3-prop:five} and
Corollary~\ref{cor:ind-bridge} give $28+1=29$, enough for the targets
$24,29$. For $R=5,6$, at least thirty triples survive, giving
$29+1=30$, again enough.

It remains to obtain thirty large unions when $N=38,39$. At least
twenty-eight triples survive, giving twenty-eight unions at cutoff
five. If $m\ge12$, the stronger form of Corollary~\ref{cor:ind-bridge}
adds at least two. If $\delta\le8$, at least thirty triples survive,
which also settles the case. This leaves $m=10,\delta=9$, or
$m=11,\delta\in\{9,10\}$. When $\delta\ge m-1$, all
$(m-1)$-subsets are generated, since a one-point cover of a link
permits at most $m-2$ edges. At least two of these unions contain
$p$ and have size greater than eight, so they supply the required
extra two unions.

For $m=11,\delta=9$, the same conclusion holds unless some
ten-subset is not generated. In that event an uncovered point has
all link edges through the omitted point. Minimum degree nine
forces this link to be the full nine-edge star. The resulting nine
triples on a fixed pair generate $\binom94=126$ distinct six-sets,
which is more than enough.

\emph{Upper steps: $R\ge7$.} If the support after the first deletion is seven,
then two points disappeared together. Capacity gives
$R\le\delta\le7$, so $R=\delta=7$: the remaining triples form
the complete layer on a seven-set $Z$, and the removed triples
are $\{p,y,z\}$ for all $z\in Z$. The complete layer gives eight
unions of size at least six; the pair and the subsets of $Z$ of
size at least five give twenty-nine more.

Otherwise the surviving support is at least eight. Since $N\ge42$,
Lemma~\ref{lem:ind-retention} with $a=8$, $b=0$, leaves at least
$\lceil34\cdot5/8\rceil=22$ triples after two deletions.
Proposition~\ref{ss3-prop:five} gives
$M_{5}(\mathcal C)\ge22$. Equations~\eqref{ss3-eq:mass} and
\eqref{ss3-eq:two-delete}, with Lemma~\ref{ss3-lem:q2}, now give
$M_{6}(\mathcal A)\ge1+22+14=37$.
\end{proof}

\subsection{Every larger cutoff}

We first explain why the trace estimates become easier under induction.
Each uniform level of a union-closed family generates a subfamily of
that same family. Thus a bound at a smaller cutoff limits the number
of triples in a family with few large members. The theorem for pairs
does the same for its pair members.

\begin{lemma}\label{ss3-lem:trace-general}
Let $n\ge7$, and assume Theorem~\ref{thm:small-k}(i) at all cutoffs
smaller than $n$. Define
\begin{equation}\label{ss3-eq:tn}
 s_n=\left\lceil\frac{(n-1)(n-2)}{2(n+2)}\right\rceil,
 \qquad t_n=\binom n3+s_n.
\end{equation}
For any finite union-closed family $\mathcal D$ with notation \eqref{ss3-eq:trace-def},
$W\ge t_n$ implies
\[
 Q_{n-4}\ge
 \begin{cases}16,&n=7,\\2n+1,&n\ge8.\end{cases}
\]
\end{lemma}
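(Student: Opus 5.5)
We argue by contraposition, in the pattern of Lemma~\ref{ss3-lem:q2}. Suppose $Q_{n-4}$ is below the stated bound, so $Q_{n-4}\le15$ for $n=7$ and $Q_{n-4}\le2n$ for $n\ge8$. We show that then $W=f_1+2f_2+f_3<t_n$. Because each uniform level $\mathcal D_j$ generates a subfamily of $\mathcal D$, any lower bound on $|\langle\mathcal D_j\rangle_{\ge n-4}|$ in terms of $f_j$ becomes an upper bound on $f_j$. We take these in turn for $j=1,2,3$.

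\emph{Singletons.} $f_1$ distinct singletons generate $2^{f_1}-1$ unions, all subsets of size at least one of an $f_1$-set. The number of size at least $n-4$ is $\sum_{i\ge n-4}\binom{f_1}{i}$. If $f_1\ge n-1$, this count is already at least $1+(n-1)+\binom{n-1}{2}>2n$. Hence $f_1\le n-2$, and the singleton contribution is negligible, of order $n$.

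\emph{Pairs.} Write $f_2=\binom v2+b$ with $0\le b<v$. The Leck--Roberts--Simpson count \eqref{ss3-eq:pairs} at cutoff $h=n-4$ gives the number of unions of size at least $n-4$. Once $v\ge n-2$, the complete graph on $n-2$ vertices already yields $\binom{n-2}{n-4}+\binom{n-2}{n-3}+1$ such unions, which exceeds $2n$ for $n\ge7$. So $v\le n-3$, and $f_2\le\binom{n-2}{2}$ roughly, with a finer step-by-step use of \eqref{ss3-eq:pairs} near the top.

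\emph{Triples.} This is where the induction hypothesis enters. Apply Theorem~\ref{thm:small-k}(i) at cutoff $n-4$: the first-strip theorem together with the second-strip hypothesis at smaller cutoffs. If $f_3>\binom{n-3}{3}+s$, then $\langle\mathcal D_3\rangle$ has at least $T_{n-4}$-type many unions of size at least $n-4$. The first step of the first strip already gives $n-4-3+3=n-4$. We need the count to exceed $2n$, which requires going into the second strip of cutoff $n-4$, that is, $f_3>\binom{n-2}{3}$. There the table \eqref{ss3-eq:target} and \eqref{ss3-eq:lower-steps} show $T_{n-4}(R)\ge H(n-3)-(2(n-4)+1)$, and this is far more than $2n$. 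So $f_3\le\binom{n-2}{3}$ plus a small correction.

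\emph{Combining.} Rather than add the three separate maxima, we must exploit that $Q_{n-4}$ counts all levels together. The plan is to split according to which level dominates.

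If $f_3\le\binom{n-2}{3}$ with $f_2\le\binom{n-3}{2}$ and $f_1\le n-2$, then
\[
W\le (n-2)+(n-3)(n-4)+\binom{n-2}{3}.
\]
We then compare this with $t_n=\binom n3+s_n$, noting $\binom n3-\binom{n-2}{3}=(n-2)^2$. This leaves a deficit of order $n$, which $s_n\approx n/2$ alone cannot absorb, so we need interaction between levels. The refinement is this: when $f_3$ is near $\binom{n-2}{3}$, the triples generate about $n$ or more unions at cutoff $n-4$. The pairs and singletons then contribute further unions disjoint in size or content, so the joint budget $Q_{n-4}\le 2n$ forces $f_2$ and $f_1$ down. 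We would carry this out by bounding $Q_{n-4}\ge |\langle\mathcal D_3\rangle_{\ge n-4}|+(\text{new unions from adding a pair or singleton})$, using adjunction injections like those in Lemma~\ref{ss3-lem:q2}. A pair outside the triple support, adjoined to the $(n-5)$- and $(n-6)$-unions, produces new sets.

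\emph{Main obstacle.} This final trade-off between levels is the hard step: it must be sharp enough to beat $s_n=\lceil (n-1)(n-2)/(2(n+2))\rceil$. The case $n=7$ with threshold $16$ will likely need a separate small check, with $t_7=35+3=38$ and cutoff three.
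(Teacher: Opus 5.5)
Your level-by-level plan is the paper's own: bound each $f_j$ by applying a lower-cutoff result to $\langle\mathcal D_j\rangle\subseteq\mathcal D$, then add. But the proposal never finishes. The ``deficit'' that sends you looking for an interaction between levels comes from an arithmetic slip. Since $\binom n3=\binom{n-2}3+(n-2)^2$,
\[
 (n-2)+(n-3)(n-4)+\binom{n-2}{3}=\binom n3-(2n-6)<t_n ,
\]
so your displayed bound already contradicts $W\ge t_n$, and $s_n$ plays no role. Your combination used $f_2\le\binom{n-3}2$, which your pair step did not establish. The weaker bound it does give, $f_2\le\binom{n-2}2-1$ from $v\le n-3$, still yields $W\le\binom n3-2<t_n$. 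The trade-off between levels that you flag as the main obstacle is therefore unnecessary; as written, it is the step your argument leaves unproved. Also, $s_7=\lceil 5/3\rceil=2$, so $t_7=37$. The paper likewise never mixes levels: for $n\ge10$ it shows $f_j\le\binom{n-3}j$, and Vandermonde gives $W\le\binom{n-1}3$.

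The triple step needs a correct justification. At cutoff $n-4$ the second strip is $\binom{n-3}3<N\le\binom{n-2}3$. A value $f_3>\binom{n-2}3$ lies beyond it, so the hypothesis applies only after passing to a subfamily. Moreover, the minimum over that strip is $T_{n-4}(1)=H(n-2)-(2n-7)=(n^2-7n+18)/2$, not $H(n-3)-(2n-7)$, and it exceeds $2n$ only for $n\ge10$. This is why the paper, which uses the start of the strip, needs a separate table for $n=7,8,9$.

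The clean fix is to apply the hypothesis to a subfamily of exactly $\binom{n-2}3$ triples, the last value of the strip. There $R=\binom{n-3}2\ge2(n-4)$, so the model count is $H(n-2)$. This equals $16$ at $n=7$ and exceeds $2n$ for $n\ge8$, so $f_3\le\binom{n-2}3-1$, with no ``small correction''. In the same way, $n-2$ singletons and $\binom{n-2}2$ pairs (formula \eqref{ss3-eq:pairs} with $v=n-2$, $b=0$) each force $H(n-2)$ unions. Hence
\[
 W\le(n-3)+2\left[\binom{n-2}2-1\right]+\binom{n-2}3-1=\binom n3-4<t_n
\]
for every $n\ge7$ at once. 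With these repairs, your route is a valid and slightly simpler alternative to the paper's. It avoids the small-$n$ table, at the price of less slack than the paper's bound $\binom{n-1}3$ for $n\ge10$.
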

\begin{proof}
For $n=7,8,9$, suppose respectively that
$Q_3\le15$, $Q_4\le16$, or $Q_5\le18$. The resulting level bounds
are
\[
\begin{array}{c|rrr|r|r}
 n&f_1\le&f_2\le&f_3\le&W\le&t_n\\ \hline
 7&4&8&9&29&37\\
 8&5&10&12&37&59\\
 9&6&15&21&57&87.
\end{array}
\]
Each column follows from a lower-cutoff bound. Five,
six, and seven singletons generate respectively $16,22,29$ unions
at cutoffs $3,4,5$. Formula~\eqref{ss3-eq:pairs} gives at least
$16,17,23$ such unions from $9,11,16$ pairs. Finally ten triples
give at least sixteen nonempty unions by \eqref{ss3-eq:first-small};
thirteen triples give seventeen unions at cutoff four by
Proposition~\ref{ss3-prop:four}; and twenty-two triples give twenty-two
at cutoff five by Proposition~\ref{ss3-prop:five}. Each row contradicts
$W\ge t_n$.

Suppose now $n\ge10$, put $h=n-4$, and assume $Q_h\le2n$.
The following thresholds each force more than $2n$ unions of size
at least $h$:
\[
\begin{array}{c|c|c}
 \text{member size}&\text{number of members}&
 \text{forced unions of size at least }h\\ \hline
 1&n-2&H(n-2)\\
 2&\binom{n-3}{2}+1&(n^2-5n+10)/2\\
 3&\binom{n-3}{3}+1&(n^2-7n+18)/2.
\end{array}
\]
The first row counts unions of singletons. The second is
\eqref{ss3-eq:pairs} at the first value of its second strip. The third
is the induction hypothesis at cutoff $h$, using
\eqref{ss3-eq:lower-steps}. Consequently $f_j\le\binom{n-3}{j}$
for $j=1,2,3$. The Vandermonde identity in
Lemma~\ref{lem:ind-trace} now gives
\[
 W\le\binom{n-1}{3}<t_n,
\]
a contradiction.
\end{proof}

\begin{proof}[Completion of Theorem~\ref{thm:small-k}(i)]
The cutoffs $3,4,5,6$ have been proved. Fix $n\ge7$ and assume
the theorem at all smaller cutoffs. At cutoff $n$, induct on the
support. The case $m=n+2$ is Theorem~\ref{thm:second-minimum}. For $m\ge n+3$, the degree
lemmas again reduce us to $2\le\delta\le m-1$.

For $R\le n$ the first deletion suffices. For $R\ge n+1$ we use
the second deletion and the trace estimate, except when the first
deletion leaves a complete layer.

\emph{Lower steps: $R\le n$.} The quantity $N-d(N)$ is nondecreasing
in $N$: the integer $d(N)$ increases by at most one when $N$
increases by one. For $n\ge8$, put $N_0=\binom{n+1}{3}+1$.
Then
\begin{equation}\label{ss3-eq:low-R}
 d(N_0)\le\binom{n-2}{2},\qquad
 N_0-d(N_0)\ge\binom n3+2n-2.
\end{equation}
For the first inequality, put $b=\binom{n-2}{2}$ and compare
$(b+1)(b+2)$ with $3N_0$. The difference is $17$ at $n=8$,
and its expansion in $n-8$ has positive coefficients. The second
inequality follows from Pascal's identity. Thus the first deletion leaves enough
triples to reach the last step of the second strip at cutoff $n-1$.
The induction hypothesis gives at least $H(n+1)$ unions there.
Corollary~\ref{cor:ind-bridge} adds one, which proves all the required
bounds in \eqref{ss3-eq:lower-steps}.

For $n=7$, the three relevant starts are $N=57,58,59$. The degree
bound leaves at least $45,46,47$ triples. At cutoff six these give
$36,36,37$ unions. Adding one proves the targets $31,37,38$.
Monotonicity covers the intervals between these starts.

\emph{Upper steps: $R\ge n+1$.} If the first deletion leaves
support $n+1$, then $m=n+3$ and two points disappeared together.
Capacity gives $R\le\delta\le n+1$, so $R=\delta=n+1$.
The surviving family is the complete triple layer on an $(n+1)$-set
$Z$, and the removed triples are $\{p,y,z\}$ for all $z\in Z$.
Unions avoiding the pair give $n+2$ large sets. Through the pair,
the subsets of $Z$ of size at least $n-1$ give $H(n+1)$ more.
Their sum is $H(n+2)$.

Otherwise the surviving support is at least $n+2$.
Lemma~\ref{lem:ind-retention}, with $a=n+2$, $b=0$, and
$N\ge\binom{n+1}{3}+n+1$, gives at least $t_n$ triples after the
second deletion, with $t_n$ as in \eqref{ss3-eq:tn}. Notice that
$s_7=2$ and $3\le s_n\le n-1$ for $n\ge8$.
The induction hypothesis at cutoff $n-1$ therefore gives
\[
 M_{n-1}(\mathcal C)\ge
 \begin{cases}29,&n=7,\\H(n)+1,&n\ge8.\end{cases}
\]
Together with \eqref{ss3-eq:mass}, Lemma~\ref{ss3-lem:trace-general}, and
\eqref{ss3-eq:two-delete}, this yields
\[
 M_{n}(\mathcal A)\ge
 \begin{cases}
 1+29+16=H(9),&n=7,\\
 1+[H(n)+1]+(2n+1)=H(n+2),&n\ge8.
 \end{cases}
\]
This is at least $T_n(R)$ and completes both inductions.
\end{proof}

\subsection{A common bound for small generators}

For $k=3,4,5$, we use support reduction directly. We shall find a core
with at most five points that can become uncovered when three points
are omitted. Cover these exceptions and the points lost in the last
deletion by at most six generators. Any three other points may then
be omitted.

\begin{proposition}\label{prop:second-short-range}
For $k\in\{3,4,5\}$, the second-strip inequality holds on arbitrary
support whenever $n\ge10k$.
\end{proposition}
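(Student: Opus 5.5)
Since the max-lex count in the second strip never exceeds $T_{\max}(n)=S_2(n+2)=1+\binom{n+3}{2}$, it suffices to produce at least $T_{\max}(n)$ large unions whenever the support exceeds $n+2$. Support exactly $n+2$ is Theorem~\ref{thm:second-minimum}. So let $|\operatorname{supp}(\mathcal A)|=n+c$ with $c\ge3$, and follow the scheme of Section~\ref{sec:larger-supports} with $t=2$. The aim in every case is a set $Y$ of at least $n+2$ points, together with a generated union $U$, such that every omission of at most two points of $Y\setminus U$ leaves a generated set of size at least $n$ after adjoining $U$. The remark preceding the proposition suggests doing slightly better: allow three omissions outside a small exceptional core, so that a cover by at most six generators absorbs both the core and the points lost in deletion.

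\emph{Large support.} For $c\ge3k$, apply Lemma~\ref{lem:large-support-cover} with $J=3$. This gives $S_3(\lceil n/k\rceil+3)$ large unions. With $n\ge10k$ this is at least $S_3(13)$, and it grows cubically in $n/k$. I will check that $S_3(\lceil n/k\rceil+3)\ge 1+\binom{n+3}{2}$ for $k\le5$ and $n\ge10k$. The cubic term $\frac16(n/k)^3$ beats $\frac12 n^2$ only when $n\gtrsim3k^3$, so for $k=4,5$ this comparison is not automatic. I therefore expect to need $J=4$ or larger, taking $c\ge kJ$ accordingly, and the intermediate range then grows.

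\emph{Intermediate support} $3\le c<kJ$. Run the minimum-degree deletion of Subsection~\ref{subsec:reducing-support} down to a set $Z$ of size $n+2-i$ with $i<k$. If at some stage every degree exceeds $\lambda(s,k)$, the degree observation at the start of this section already gives $1+\binom{s}{2}\ge T_{\max}(n)$ unions and we are done. Otherwise, at most $a=c-2$ deletions occur, each losing at most $2\binom{n+kJ-2}{k-2}$ generators. The number of missing $k$-sets on $Z$ is then at most about $\binom{n+1}{k-1}+\Lambda$, which is $O(k\,c\,n^{k-2})$ and small compared with $n^{k-1}$ when $n\ge10k$. Now define the core $K\subseteq Z$ as the set of points that become uncovered for some omission of at most three points. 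Each such point $z$ with omitted set $F$ forces every generator of $Z$ through $z$ to meet $F$, so all $k$-sets through $z$ avoiding $F$ are missing: roughly $\binom{n-3}{k-1}$ of them. The stars of distinct core points overlap in at most $O(n^{k-2})$ sets, so the missing budget limits $|K|\le5$. This is where $n\ge10k$ enters, comparing $\binom{n-4}{k-1}$ with the budget.

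\emph{Restoration and count.} Cover the at most five core points and one deleted outside point (available since $c>2$) by at most six original generators, with union $U$. Then $|Y\setminus U|\ge n+2-i-6(k-1)$. Every omission of at most three points of $Y\setminus U$ gives a distinct generated union $U\cup(Y\setminus F)$ of size at least $n$, because restoring one outside point pays for the third omission. This yields $S_3(n+2-i-6k+6)$ unions, which exceeds $1+\binom{n+3}{2}$ once $n-7k$ is of order $n$, that is, comfortably for $n\ge10k$.

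\emph{Main obstacle.} The hard part is the core bound $|K|\le5$: controlling overlaps of the missing stars when omitted triples vary, together with the boundary arithmetic at $k=5$, $n=50$. There both the budget inequality and the large-support cover comparison are tight. If the core bound fails, I would fall back on Lemma~\ref{lem:two-designated}-style shadow bounds for at most two designated points and treat three or more blockers via the overlap count of Theorem~\ref{thm:h-criterion}.
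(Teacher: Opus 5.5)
Your outline follows the paper's architecture: the cover lemma for large supports, minimum-degree deletion to a dense core, at most five exceptional points, six covering generators, and an $S_3$ count. However, the step you flag as the main obstacle fails with the estimates you propose.

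First, $J=4$ is forced for every $k\in\{3,4,5\}$, since already $S_3(13)=378<529=T_{\max}(30)$. The value $J=4$ does work, because $S_4(j)\ge T_{\max}(5j-20)$ for $j\ge14$. It allows up to $4k-3$ deletions. Charging each deletion the uniform cost $2\binom{n+4k-2}{k-2}$ gives a missing budget $\binom{n+1}{k-1}+\Lambda$ of $47866$ at $k=4$, $n=40$, and about $1.95\cdot10^6$ at $k=5$, $n=50$. By contrast, six exceptional points at level $u=n-1$ force only $\binom{n-1}{k}-\binom{n-7}{k}$ missing sets, namely $41331$ and $944286$.

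So $|K|\le5$ does not follow. At $n=10k$ the budget is several stars' worth, not small. Falling back on two designated points does not help, because the difficulty is the size of the budget rather than the overlaps. Two refinements are needed:
\begin{enumerate}
\item Charge a deletion at support size $w$ only the degree threshold $\lambda(w,k)=\binom{w-2}{k-2}+\binom{w-4}{k-3}$ from the degree observation. This is about half your cost.
\item Use that each support size $n+3\le w\le n+4k-1$ is visited at most once. The losses then telescope to $D=\binom{n+4k-2}{k-1}+\binom{n+4k-4}{k-2}-\binom{n-1}{k-2}$, which includes the term $\binom{n+1}{k-1}$ moving the reference capacity to $n+2$ points.
\end{enumerate}
At $k=5$ the telescoping is essential: evaluating $\lambda$ at the largest support for every deletion still gives about $1.10\cdot10^6$. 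Now count the witness stars sequentially, so that the $i$-th exceptional point contributes $\binom{u-i}{k-1}$ missing sets avoiding the earlier ones. Monotonicity of $F(s)=\binom sk-\binom{s-3}k+\binom{s-9}k$ reduces the comparison to $s=n+2$. One then gets $D\le\binom{n-1}{k}-\binom{n-7}{k}$ at $n=10k$, with slack $1094$, $15942$ and $102565$ for $k=3,4,5$.

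The restoration step also has a size error. Your $Y$ has $n+2-i$ points. Omitting three points and restoring one outside point leaves unions of size only $n-i$, which fall below the cutoff when $i\ge1$. The missing observation is that the points lost in the final deletion have identical incidence families, so they all lie in one deleted generator. Take that generator as the sixth member of the cover. Equivalently, work on the support just before the final deletion, which has at least $n+3$ points.

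In addition, a core generator through an exceptional point may lie entirely inside $Y$. The cover therefore uses up to $6k$ points, not $6(k-1)$. The resulting count $S_3(n+3-6k)$ does exceed $T_{\max}(n)$, but only narrowly at the boundary: $576$ against $529$ at $k=3$, $n=30$.
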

\begin{proof}
The minimum support is already covered by Theorem~\ref{thm:second-minimum}.
Suppose the support is $n+c$, with $c\ge3$.

\emph{Large supports.} If $c\ge4k$, apply
Lemma~\ref{lem:large-support-cover} with $J=4$. Put
$j=\lceil(n+4k)/k\rceil\ge14$. The lemma gives at least $S_4(j)$
large unions. Since $n\le k(j-4)\le5(j-4)$,
it suffices to compare this with $T_{\max}(5j-20)$. The difference
$S_4(j)-T_{\max}(5j-20)$ is a polynomial in $j-14$ with positive
coefficients, so it is positive for $j\ge14$.

\emph{A dense remaining family.} Suppose $3\le c<4k$.
Delete minimum-degree points until
the actual support has size at most $n+2$, unless the degree
observation has already finished the proof. Each support size is
visited at most once. Summing the deletion costs gives a core with at
least
\[
 K=\binom{n+2}{k}-D+1,
 \qquad
 D=\binom{n+4k-2}{k-1}+\binom{n+4k-4}{k-2}
                        -\binom{n-1}{k-2}
\]
generators. In this expression, $D$ is the sum of $\lambda(w,k)$ over
$n+3\le w\le n+4k-1$, together with $\binom{n+1}{k-1}$; the latter
term changes the reference capacity from $n+1$ points to $n+2$.
Write $\mathcal G$ for this core and $s$ for its support-size.

\emph{Bounding the exceptions.} For a core on $s$ points, call a
point exceptional at level $u$ if
some $u$-set containing it contains no generator through it. If
$x_1,\ldots,x_r$ are exceptional, count the missing generators in
the witness for $x_i$ which avoid $x_1,\ldots,x_{i-1}$. These
collections are disjoint and contain at least
\[
 \sum_{i=1}^r\binom{u-i}{k-1}
   =\binom uk-\binom{u-r}{k}
\]
missing generators. At $u=s-3$, six exceptional points would therefore
give
\[
 |\mathcal G|\le
 F(s):=\binom sk-\binom{s-3}{k}+\binom{s-9}{k}.
\]
The function $F$ is nondecreasing: the difference of its first two
terms is a sum of three nondecreasing binomial coefficients.
Consequently every possible core on $s\le n+2$ has at most five
exceptions if
\begin{equation}\label{eq:second-short-budget}
 D\le\binom{n-1}{k}-\binom{n-7}{k}.
\end{equation}
Indeed, this is exactly $K>F(n+2)$. It also implies
$K>\binom{k+2}{k}$, so the core is nonempty and $s\ge k+3$,
as required for the choice $u=s-3$.

To check \eqref{eq:second-short-budget}, divide by
$\binom{n-2}{k-1}$. Each term in the sum defining $D$ then decreases
with $n$, as follows from the product formula for binomial coefficients.
On the right, write the difference as the sum of six consecutive
binomial coefficients of order $k-1$. Each normalized term increases
with $n$. Thus only $n=10k$ remains, and substitution verifies the
inequality for $k=3,4,5$. The right side exceeds the left by
$1094$, $15942$, and $102565$, respectively.

\emph{Counting the omissions.} Let $Z$ be the support just before the
final deletion. The points
lost in that step have identical incidence families, so one deleted
generator contains them all. Choose that generator and one core
generator through each exceptional point. Their union $W$ has at most
$6k$ points. Every set $Z\setminus D'$ with
$D'\subseteq Z\setminus W$, $|D'|\le3$, is generated. Indeed, the
chosen generators cover $W$. Every other point lies in the remaining
core, whose size is at least $s-3$, and is covered there by
nonexceptionality. Since $|Z|\ge n+3$, these generated sets have size
at least $n$ and number at least $S_3(n+3-6k)$.

Finally, $n\ge10k$ gives $n+3-6k\ge2n/5+3$. Evaluating $S_3$ by
its polynomial expression, the difference
$S_3(2n/5+3)-T_{\max}(n)$ is a cubic in $n-30$ with positive
coefficients. It is therefore positive for $n\ge30$. This finishes
the remaining support sizes and proves the proposition.
\end{proof}

The restriction $k\in\{3,4,5\}$ is used in the cover comparison
and in checking the core budget. Taking $k=4,5$ gives $n\ge40,50$, respectively, and proves
Theorem~\ref{thm:small-k}(ii). Equality in both parts is supplied by
the max-lex family, whose count was computed in
subsection~\ref{the-max-lex-count}.

\appendix
\section{Third-strip calculations for triples}\label{app:third-strip}

We complete the calculations for Theorem~\ref{thm:third-small}, using
the deletion estimate of Proposition~\ref{prop:third-deletion}.
Throughout this appendix, $k=3$ and $n\ge3$ is an integer. Let
$\mathcal A$ be a family of $N$ triples with support $V$ of size $n+3$,
where $N=\binom{n+2}3+R$ and $1\le R\le\binom{n+2}2$.
Put $v=n+2$ and $q=n-k+3$. As in Definition~\ref{def:small-intersections},
$h_j^{(q)}(r)$ counts the $j$-subsets contained in the first $r$
$q$-sets in colex order. Write
\[
 Q_q(R)=\sum_{j=0}^3h_j^{(q)}(R),\qquad
 B(n,k)=\binom n{k-1}+\binom{n-2}{k-1}+\binom{n-3}{k-1}.
\]
Thus $Q_q$ is the function $Q$ in \eqref{eq:third-arithmetic}. We also write
$S_2(v)=1+v+\binom v2$. In the deletion argument,
$\mathcal B$ is the family avoiding the deleted point and $\mathcal G$ consists of the
remaining generators with that point removed. Write $z$ for the deleted
point and $X=V\setminus\{z\}$. Its degree $d=|\mathcal G|$ satisfies
\[
 R\le d\le D(n,k)=\binom{n+2}{k-1}-\binom n{k-1},\qquad e=d-R.
\]
For $e>0$, let $y\ge k-1$ satisfy $e=\binom y{k-1}$ and, for integers
$s\ge0$, set
\[
 b_s(y)=\begin{cases}\lfloor\binom ys\rfloor,&y\ge s,\\0,&y<s.\end{cases}
\]
Proposition~\ref{prop:continuous-missing} bounds the number $c_u$ of missing $u$-unions of $\mathcal B$
by $b_{u-1}(y)$. Put $L_{n,k}(0)=0$ and
\begin{equation}\label{app:eq:loss}
 L_{n,k}(e)=b_{n-2}(y)+2\bigl(b_{n-1}(y)+b_n(y)+b_{n+1}(y)\bigr)
 \quad(e>0).
\end{equation}
The deletion bound, Proposition~\ref{prop:third-deletion}, is
\begin{equation}\label{app:eq:actual-loss}
 |\langle\mathcal A\rangle_{\ge n}|\ge S_2(v)+Q_q(d)-c_{n-1}
                         -2(c_n+c_{n+1}+c_{n+2}).
\end{equation}
Consequently it suffices to check
\begin{equation}\label{app:eq:criterion}
 Q_q(d)-Q_q(R)\ge L_{n,k}(d-R).
\end{equation}
If the degree interval is empty, Lemma~\ref{lem:bottom-level} applies.

Suppose $n>3$.
For $k=3$, the difference between $B(n,3)$ and the largest possible
missing-set budget plus one is
\[
 B(n,3)-\binom{n+2}2=n^2-8n+8.
\]
It is positive for every $n\ge7$. Theorem~\ref{thm:h-criterion} therefore leaves
only the following ranges:
\begin{center}
\begin{tabular}{ccl}
\toprule
$n$ & Remainders still to prove & Degree bound \\
\midrule
4 & $1\le R\le8$ & $d\le9$ \\
5 & $1\le R\le7$ & $d\le11$ \\
6 & $1\le R\le4$ & $d\le13$ \\
\bottomrule
\end{tabular}
\end{center}
In each case, Lemma~\ref{lem:bottom-level} permits us to assume that a blocked
$(n+1)$-set exists. Delete a blocking point as in
Proposition~\ref{prop:third-deletion}, and put $e=d-R$.

\subsection{Cutoff six}
Here $q=6$ and $0\le e\le12$. If $e\le5$, a missing $5$-set would
require a star of six missing triples. Thus $L_{6,3}(e)=0$, and
monotonicity of $Q_6$ proves~\eqref{app:eq:criterion}.

If $e\ge6$, the defining real number $y$ satisfies $y<11/2$, since
$\binom{11/2}2=99/8>12$. Therefore
\[
 L_{6,3}(e)\le
 \left\lfloor\binom{11/2}4\right\rfloor
 +2\left\lfloor\binom{11/2}5\right\rfloor=9+2\cdot2=13.
\]
For $R=1,2,3,4$, respectively,
\[
 Q_6(R)=42,58,63,64,
 \qquad Q_6(R+6)=64,80,85,86.
\]
Since $d\ge R+6$, the gain is at least $22$ in every case.
This proves the remaining cutoff-six cases.

\subsection{Cutoff five}
Here $q=5$ and $0\le e\le10$. Again $e\le2$ gives
$L_{5,3}(e)=0$. The remaining comparisons are short enough to record
in full. The last column is the minimum over
$1\le R\le\min\{7,11-e\}$.
\begin{center}
\begin{tabular}{ccc}
\toprule
$e$ & Upper bound for $L_{5,3}(e)$ &
$\min\bigl(Q_5(R+e)-Q_5(R)\bigr)$ \\
\midrule
3 & 1 & 1 \\
4 & 1 & 5 \\
$5\le e\le8$ & 11 & $\ge16$ \\
9 & 14 & 21 \\
10 & 22 & 32 \\
\bottomrule
\end{tabular}
\end{center}
For the middle column, use $y=3$ at $e=3$, $y<17/5$ at $e=4$,
$y<23/5$ for $e\le8$, $y<24/5$ at $e=9$, and $y=5$ at $e=10$.
These inequalities follow by substituting in $\binom y2$; substituting
in~\eqref{app:eq:loss} gives the displayed bounds. The last column follows
from the complete list
\[
 \bigl(Q_5(1),\ldots,Q_5(11)\bigr)
 =(26,37,41,42,42,42,53,57,58,58,58).
\]
Thus~\eqref{app:eq:criterion} holds in every remaining cutoff-five case.

\subsection{Cutoff four}
Here $X$ has six points and $\mathcal G$ is a graph whose edges meet two
fixed points $a,b$. There are nine possible edges: $ab$ and the eight
edges from $a,b$ to the other four points.

We first apply the numerical count wherever it suffices. Its inputs are
\begin{center}
\begin{tabular}{c|rrrrrrrrr}
\toprule
$d$ &1&2&3&4&5&6&7&8&9\\
$Q_4(d)$ &15&22&25&26&26&33&36&37&37\\
\midrule
$e$ &0&1&2&3&4&5&6&7&8\\
$L_{4,3}(e)$ &0&1&2&5&6&9&16&19&24\\
\bottomrule
\end{tabular}
\end{center}
The second row is the colex shadow count. For the last row use
$y=(1+\sqrt{1+8e})/2$ in~\eqref{app:eq:loss}; its first term is exactly $e$.
Comparison of the rows proves~\eqref{app:eq:criterion}, except possibly at
\begin{equation}\label{app:eq:exceptions}
 \begin{split}
 d=5 &: \quad R=2,3,4,\\
 d=8 &: \quad R=2,\\
 d=9 &: \quad R=1,2,3,6,7,8.
 \end{split}
\end{equation}
We treat these cases by counting some unions through $z$ directly.

Suppose first $d=5$. If all edges of $\mathcal G$ share a point, $\mathcal G$ is
the full star on six points. It generates every set of size at least
three containing its centre; there are $26$ of these. After adjoining
$z$, they give $26$ large unions, irrespective of the missing triples
in $\mathcal B$. Since $e\le3$, Proposition~\ref{prop:continuous-missing} shows that at most
one subset of $X$ of size at least four is missing as a union, and none
is missing when $e\le2$. The total is at least $47$ when $R=2$ and
at least $48$ when $R=3,4$. The required values are $44,47,48$.

If the five edges have no common point, their complements are five
$4$-sets whose union is all six points. Their singleton shadow thus
has size six. Kruskal--Katona gives at least ten pairs and ten triples
in their shadows. Consequently the candidate count used in
\eqref{app:eq:actual-loss} is at least $1+6+10+10=27$, one more than
$Q_4(5)$. This extra candidate supplies exactly the shortfall in each
of the three $d=5$ cases in~\eqref{app:eq:exceptions}.

Next let $d=8$ and $R=2$, so $e=6$. If the missing edge is $ab$,
the graph is $K_{2,4}$. All $37$ subsets of size at least three meeting
both parts are generated by its edges. Otherwise, after exchanging
$a,b$ if necessary, the missing edge is $ac$ for one of the other
four points. The point $b$ is adjacent to every other point.
The $26$ subsets of size at least three containing $b$ are generated.
There are four further generated subsets avoiding $b,c$: take $a$
and at least two of the other three points. Thus there are at least
$30$ unions through $z$. Since $e=\binom42$, Proposition~\ref{prop:continuous-missing}
bounds the missing old unions by $\binom43+\binom44=5$.
The total is at least $30+(22-5)=47$, exceeding the required $44$.

Finally let $d=9$. Every subset of $X$ of size at least three meeting
$\{a,b\}$ is generated by $\mathcal G$. Exactly $37$ subsets have this property.
If $R\le5$, then $e\le8$ and $y<23/5$. There are at most
\[
 \left\lfloor\binom{23/5}3\right\rfloor
 +\left\lfloor\binom{23/5}4\right\rfloor=7+2=9
\]
missing old unions of size at least four. Hence the total is at least
$37+22-9=50$, while the target is at most $48$.
If $R=6$, then $e=3$, and the total is at least $37+21=58>55$.
For $R=7,8$, we have $e\le2$, so all $22$ old large subsets are
generated. The total $59$ is at least the respective targets $58,59$.
This completes the cutoff-four proof.

\subsection{The diagonal cutoff}
When $n=3$, the same range of $N$ is the second strip for cutoff four,
on its minimum support of six points. Apply the established
minimum-support second-strip theorem and add the $N$ triples:
\[
 |\langle\mathcal A\rangle_{\ge3}|=N+|\langle\mathcal A\rangle_{\ge4}|
 \ge N+U_3(N,4)=U_3(N,3).
\]
Together with the preceding cases, this proves Theorem~\ref{thm:third-small}.

\hypertarget{acknowledgments}{%
\section*{Acknowledgments}}

The author thanks Farhood Rostamkhani for his contributions to the early development of this work and Žarko Ranđelović for insightful communication.

\Needspace{28\baselineskip}
\bibliographystyle{amsplain}
\bibliography{references}

@phdthesis{Roberts1999,
  author  = {Roberts, Ian T.},
  title   = {Extremal Problems and Designs on Finite Sets},
  school  = {Curtin University},
  address = {Perth, Australia},
  year    = {1999}
}

@article{LeckRobertsSimpson2012,
  author  = {Leck, Uwe and Roberts, Ian T. and Simpson, Jamie},
  title   = {Minimizing the weight of the union-closure of families of two-sets},
  journal = {Australasian Journal of Combinatorics},
  volume  = {52},
  year    = {2012},
  pages   = {67--73},
  note    = {\href{https://ajc.maths.uq.edu.au/pdf/52/ajc_v52_p067.pdf}{Full text}}
}

@article{Randjelovic2026,
  author  = {Ran{\dj}elovi{\'c}, {\v Z}arko},
  title   = {Minimizing the number of unions},
  journal = {Electronic Journal of Combinatorics},
  volume  = {33},
  number  = {1},
  year    = {2026},
  pages   = {Paper No. P1.47, 15 pp.},
  note    = {\href{https://doi.org/10.37236/13702}{doi:10.37236/13702}}
}

@misc{IMC2016,
  author       = {{International Mathematics Competition for University Students}},
  title        = {Day 1, Problem 4},
  year         = {2016},
  howpublished = {\href{https://www.imc-math.org.uk/?item=prob4q&section=problems&year=2016}{Official problem and solution}},
  note         = {Proposed by Fedor Petrov}
}

@article{Lubell1966,
  author  = {Lubell, D.},
  title   = {A short proof of {Sperner}'s lemma},
  journal = {Journal of Combinatorial Theory},
  volume  = {1},
  year    = {1966},
  pages   = {299},
  note    = {\href{https://www.sciencedirect.com/science/article/pii/S0021980066800352}{Publisher's page}}
}

@article{BjornerKalai1988,
  author  = {Bj{\"o}rner, Anders and Kalai, Gil},
  title   = {An extended {Euler--Poincar{\'e}} theorem},
  journal = {Acta Mathematica},
  volume  = {161},
  year    = {1988},
  pages   = {279--303},
  note    = {\href{https://doi.org/10.1007/BF02392300}{doi:10.1007/BF02392300}}
}

@misc{ZhanHuang2025,
  author       = {Zhan, Xiongfeng and Huang, Xueyi},
  title        = {A note on the {Bj{\"o}rner--Kalai} theorem},
  year         = {2025},
  howpublished = {\href{https://arxiv.org/abs/2504.05943v3}{arXiv:2504.05943v3}},
  note         = {Version of 4 December 2025}
}

@article{ChaoYu2024,
  author  = {Chao, Ting-Wei and Yu, Hung-Hsun Hans},
  title   = {{Kruskal--Katona}-type problems via the entropy method},
  journal = {Journal of Combinatorial Theory, Series B},
  volume  = {169},
  year    = {2024},
  pages   = {480--506},
  note    = {\href{https://doi.org/10.1016/j.jctb.2024.08.003}{doi:10.1016/j.jctb.2024.08.003}}
}

@misc{ChaoYu2023Joints,
  author       = {Chao, Ting-Wei and Yu, Hung-Hsun Hans},
  title        = {Tight bound and structural theorem for joints},
  year         = {2023},
  howpublished = {\href{https://arxiv.org/abs/2307.15380v3}{arXiv:2307.15380v3}},
  note         = {Version of 22 December 2023}
}

@mastersthesis{McLeod2002,
  author  = {McLeod, Jeanette},
  title   = {Minimisation Problems and Linear Orders in the {Boolean} Lattice},
  type    = {Honours thesis},
  school  = {Northern Territory University},
  address = {Darwin, Australia},
  year    = {2002}
}

@incollection{Kruskal1963,
  author    = {Kruskal, Joseph B.},
  title     = {The number of simplices in a complex},
  booktitle = {Mathematical Optimization Techniques},
  editor    = {Bellman, Richard},
  publisher = {University of California Press},
  address   = {Berkeley, CA},
  year      = {1963},
  pages     = {251--278}
}

@incollection{Katona1968,
  author    = {Katona, Gyula O. H.},
  title     = {A theorem of finite sets},
  booktitle = {Theory of Graphs (Proc. Colloq., Tihany, 1966)},
  editor    = {Erd{\H o}s, P. and Katona, G.},
  publisher = {Academic Press and Akad{\'e}miai Kiad{\'o}},
  address   = {New York and Budapest},
  year      = {1968},
  pages     = {187--207}
}
\end{document}